\newcommand{\bl}[1]{\textcolor{blue}{#1}}
\newcommand{\red}[1]{\textcolor{red}{#1}}
\newcommand{\gr}[1]{\textcolor{green}{#1}}
\definecolor{mypurple}{rgb}{.4,.0,.5}
\def\w{{\bf w}}
\def\y{{\bf y}}
\def\x{{\bf x}}
\def\x{{\mathbf x}}
\def\w{{\bf w}}
\def\x{{\bf x}}
\def\y{{\bf y}}
\def\h{{\bf h}}
\def\be{\begin{equation}}
\def\ee{\end{equation}}
\def\ba{\left[\begin{array}}
\def\ea{\end{array}\right]}
\def\w{{\bf w}}
\def\x{{\bf x}}
\def\y{{\bf y}}
\def\1{{\bf 1}}
\def\g{{\bf g}}
\def\0{{\bf 0}}
\def\erfinv{\mbox{erfinv}}
\def\erf{\mbox{erf}}
\def\erfc{\mbox{erfc}}
\def\erfinv{\mbox{erfinv}}
\def\Sw{S_w}
\def\Sw{S_w}
\def\mR{{\mathbb R}}
\def\psiint{\Psi_{int}}
\def\psiext{\Psi_{ext}}
\def\psicom{\Psi_{com}}
\def\psinet{\Psi_{net}}
\def\lp{\left (}
\def\rp{\right )}
\newtheorem{theorem}{Theorem}
\begin{document}

\begin{singlespace}

\title {Box constrained $\ell_1$ optimization in random linear systems -- asymptotics 
}
\author{
\textsc{Mihailo Stojnic\footnote
{e-mail: {\tt flatoyer@gmail.com}} }}
\date{}
\maketitle

\centerline{{\bf Abstract}} \vspace*{0.1in}

In this paper we consider box constrained adaptations of $\ell_1$ optimization heuristic when applied for solving random linear systems. These are typically employed when on top of being sparse the systems' solutions are also known to be confined in a specific way to an interval on the real axis. Two particular $\ell_1$ adaptations (to which we will refer as the \emph{binary} $\ell_1$ and \emph{box} $\ell_1$) will be discussed in great detail. Many of their properties will be addressed with a special emphasis on the so-called phase transitions (PT) phenomena and the large deviation principles (LDP). We will fully characterize these through two different mathematical approaches, the first one that is purely probabilistic in nature and the second one that connects to high-dimensional geometry. Of particular interest we will find that for many fairly hard mathematical problems a collection of pretty elegant characterizations of their final solutions will turn out to exist.

\vspace*{0.25in} \noindent {\bf Index Terms: Phase transitions; large deviations;
linear systems of equations; binary/box $\ell_1$}.

\end{singlespace}

\section{Introduction}
\label{sec:back}

This paper provides a detailed mathematical study of specific properties of the well known $\ell_1$ heuristic when used for solving linear systems of equations known to have solutions of particular form. These systems assume an $m\times n$ ($m\leq n$) system matrix $A$ and an $n$ dimensional vector $\tilde{\x}$ with real entries (for short we write $A\in \mR^{m\times n}$ and $\tilde{\x}\in \mR^{n}$). Then the standard matrix-vector multiplication of $A$ and $\tilde{\x}$ gives
\begin{equation}
\y=A\tilde{\x}. \label{eq:defy}
\end{equation}
One is then interested is finding $\tilde{\x}$ if $A$ and $\y$ in (\ref{eq:defy}) are given (clearly, by (\ref{eq:defy}) such an $\tilde{\x}$ obviously exists). A particularly interesting variant of this problem that attracted a lot of attention over last several decades is the under-determined scenario with structured solutions. Namely, as is well known, in the under-determined scenario $m<n$ and if $A$ is full rank (which will typically be assumed throughout the entire paper) the problem has multiple solutions and in many applications would not be among the best posed ones. However, through additional structuring of $\tilde{\x}$ one can make the above problem typically well posed (so that it actually has a unique solution). A type of structuring that has been of great interest for a long time assumes the so-called sparse solutions i.e. the sparse $\tilde{\x}$ and it is precisely in solving the linear systems known to have this type of solutions where the above mentioned $\ell_1$ heuristic has been very successful. A heuristic type of explanation for this is the following simple line of arguments. One first recognizes that finding the sparsest $\tilde{\x}$ such that (\ref{eq:defy}) holds amounts to solving
\begin{eqnarray}
\mbox{min} & & \|\x\|_{0}\nonumber \\
\mbox{subject to} & & A\x=\y, \label{eq:l0}
\end{eqnarray}
where $\|\x\|_0$ is the so-called $\ell_0$ (quasi) norm of $\x$ that basically counts the number of nonzero entries of $\x$ (of course, from this point on the assumption will always be that there is at least one $\x$ that satisfies the constraints in (\ref{eq:l0}), essentially $\tilde{\x}$ in (\ref{eq:defy})). (\ref{eq:l0}) is of course well known to be notoriously hard to solve exactly. Nonetheless, one observes that $q=1$ is the smallest $q$ such that $\|\x\|_q=(\sum_{i=1}^{n}|\x_i|_q)^{\frac{1}{q}}$ is a convex function and relaxes (\ref{eq:l0}) so that it becomes
\begin{eqnarray}
\mbox{min} & & \|\x\|_{1}\nonumber \\
\mbox{subject to} & & A\x=\y. \label{eq:l1}
\end{eqnarray}
(\ref{eq:l1}) is of course a much easier optimization problem than (\ref{eq:l0}). In fact, not only is it a convex optimization problem due to the convexity of $\|\x\|_{1}$, it is actually a linear program relatively easily solvable in polynomial time (of course, there are many other heuristics/relaxations of (\ref{eq:l0}) that one can alternatively employ see, e.g. \cite{JATGomp,NeVe07,DTDSomp,NT08,DaiMil08,DonMalMon09}; however our concern in this paper will be precisely the minimization of the $\ell_1$ norm of $\x$ from (\ref{eq:l1}) and its variants that we will discuss below as they continue to stand, in our view, as an unbeatable benchmark when it comes to solving linear systems with sparsely structured solutions). Being a much easier optimization problem than (\ref{eq:l0}) is, of course, a good feature of the $\ell_1$. However, on its own that would not be enough for its a massive use. Its excellent solving abilities and the existence of rigorous mathematical results that confirm such abilities contribute a great deal to the $\ell_1$'s popularity as well. Moreover, while the practical applicability has been known for quite some time, the analytical progress flourished over the last decade. There has been a lot of great work in recent years about various aspects of the $\ell_1$. As the $\ell_1$ in its core form (\ref{eq:l1}) will not be the central point of this paper we leave a thorough discussion about its properties to review papers and here mention only the key milestones when its comes to its performance characterizations, namely \cite{CRT,Donoho06CS} where the initial, qualitative results were presented and \cite{DonohoPol,DonohoUnsigned,StojnicCSetam09,StojnicUpper10} where the $\ell_1$'s exact performance characterizations were obtained. These, in our view, mathematically solidified the importance of (\ref{eq:l1}) in studying linear inverse problems.

In this paper we will consider an upgrade to the standard sparse structuring mentioned above. Namely, we will be interested in unknown vectors that in addition to being sparse are also known to be from a given interval. When stated like this, one then recognizes that these kinds of vectors are not that much different from any vectors (simply one can always design an interval so that all components of any vector are from such an interval; obviously, we will throughout the paper consider so to say practically realistic scenarios, i.e. vectors that have finite components). To remove this ambiguity we will first introduce the so-called \emph{binary} sparse vectors (later in the paper we will expand this definition so that it includes vectors that more faithfully resemble the ones with the elements from a given interval). Namely, the binary vectors will have each of their components equal either to zero or to one (more on this or similar discrete type of unknown vectors as well as on their potential applications can be found in e.g. \cite{DaiMil09,DTbern,ManRec09,ManFer09,DonMalMon09,StojnicISIT2010binary}). While it will be fairly obvious later on, we still take the opportunity right here at the beginning to emphasize that there is really nothing specific about zero and one and that instead of them one can choose basically any two real numbers and all of what we will present below will hold with minimal/trival adjustments. Additionally, we will call binary vectors $k$ sparse if they have $k$ components equal to one and the remaining ones equal to zero. It is also relatively easy to note that the binary sparse vectors are a subclass of the so-called nonnegative sparse vectors studied in e.g. \cite{DTbern,StojnicCSetam09,YinZhang05nonneg,Stojnicl1RegPosasymldp}. One can of course still use the standard $\ell_1$ to solve under-determined systems with nonnegative or binary sparse solutions. However, as it is by now well known (see, e.g. \cite{DonohoPol,DonohoUnsigned,StojnicCSetam09,StojnicUpper10}), a substantial performance improvement can be achieved if one slightly modifies the standard $\ell_1$ from (\ref{eq:l1}). For the nonnegative case such a modification consists of adding the positivity constraints on the elements of the unknown $\x$ (we typically call such a modification of the standard $\ell_1$, the nonnegative $\ell_1$). In a similar fashion, for the binary sparse case the following modification of (\ref{eq:l1}) is typically considered (see e.g. \cite{DTbern,StojnicISIT2010binary})
\begin{eqnarray}
\mbox{min} & & \|\x\|_1\nonumber \\
\mbox{subject to} & & A\x=\y \nonumber \\
& & 0\leq \x_i\leq 1, 1\leq i\leq n. \label{eq:l1bin}
\end{eqnarray}
The above problem, to which we will refer as the \emph{binary} or \emph{box} $\ell_1$, is fairly similar to the standard $\ell_1$ from (\ref{eq:l1}). When it comes to the binary vectors (similarly to what was the case for the nonnegative vectors), one expects that (\ref{eq:l1bin}) should have a bit better recovery abilities than the standard $\ell_1$ as it incorporates the a priori available knowledge that the elements of the unknown sparse vectors are constrained to be in $[0,1]$ interval (in fact, not only should it have better recovery abilities than the standard $\ell_1$, it should actually have better recovery abilities than the nonnegative $\ell_1$ as well). \cite{StojnicISIT2010binary} rigorously showed that this is indeed true. More importantly, in a statistical context, \cite{StojnicISIT2010binary} precisely quantified by how much the algorithm from (\ref{eq:l1bin}) improves on both, the standard and the nonnegative $\ell_1$. In the following sections we will in detail recall on the results from \cite{StojnicISIT2010binary}. Here we briefly emphasize the difference between what was done in \cite{StojnicISIT2010binary} and what will be done here. The results of \cite{StojnicISIT2010binary} relate to the so-called phase-transition (PT) phenomena (these are of course the same phenomena that appeared in  \cite{DonohoPol,DonohoUnsigned,StojnicCSetam09,StojnicUpper10} among the key properties that the standard and the nonnegative $\ell_1$ exhibit). Basically, in the standard linear regime (regime where $n$ is large, $m=\alpha n$, $k=\beta n$, and $\alpha$ and $\beta$ are constants independent of $n$) \cite{StojnicISIT2010binary} precisely characterized the so-called ``breaking points" where these phase transitions happen (essentially the highest possible $\beta$ for which the solution of (\ref{eq:l1bin}) with overwhelming probability matches the sparsest solution of (\ref{eq:l0}) for a fixed $\alpha$; under overwhelming probability we will in this paper consider probability over statistics of $A$ that is no more than a number exponentially decaying in $n$ away from $1$). On the other hand, here, we will rely on the concepts introduced in \cite{Stojnicl1RegPosasymldp} and will take a look at the phase transitions from a different angle. Following \cite{Stojnicl1RegPosasymldp}, we will connect the phase transitions to the so-called \emph{large deviations principle} (LDP) from the classical probability theory and provide their explicit characterizations when viewed in such a way. We will do so for the binary/box $\ell_1$ from (\ref{eq:l1bin}) when used as a heuristic for finding two types of sparse unknown vectors constrained to have elements from a real interval: the first one being the above introduced binary sparse vectors and the second one being the so-called box-constrained vectors that we will introduce later on. Moreover, we will do so through two seemingly different approaches, one that is purely probabilistic and another one that has a nice connection to the high-dimensional geometry.

We will split the presentation into several sections, but two of them will of course be dominant. We will start by discussing the phase transitions of the binary $\ell_1$. After that we will move to the LDP characterizations and their connections with the PTs. In the later sections of the paper we will show how the PT and LDP results that we will create for the $\ell_1$ from (\ref{eq:l1bin}) when used for finding the binary sparse vectors can be modified so that they fit the usage of such $\ell_1$ for finding the above mentioned box-constrained sparse vectors.

\section{Binary $\ell_1$}
\label{sec:binl1}

In this section we will revisit the phase transitions (PTs) of the $\ell_1$ from (\ref{eq:l1bin}) and then we will in great detail study the corresponding LDPs. From this point on we will make a clear distinction in the used terminology when it comes to the binary and box $\ell_1$. Namely, we will
exclusively refer to the $\ell_1$ from (\ref{eq:l1bin}), the \emph{binary} $\ell_1$, when it is used for solving systems known to have binary solutions. On the other hand, the term \emph{box} $\ell_1$ will be exclusively reserved for the usage of the $\ell_1$ from (\ref{eq:l1bin}) for solving systems known to have box-constrained solutions which, as mentioned earlier, we will introduce later on.

\subsection{Phase transitions}
\label{sec:phasetrans}

Naturally, we start by recalling on the definitions of the PTs. These are of course generally well known, so we briefly state them without too much detailing (for a more comprehensive view, a long line of our work \cite{StojnicCSetam09,StojnicISIT2010binary,Stojnicl1RegPosasymldp,StojnicCSetamBlock09,StojnicUpper10} can be consulted). To that end, we say that for any given constant $0< \alpha\leq 1$ and \emph{any} given binary $\x$ with a given fixed location of its nonzero components there will be a maximum allowable value of $\beta$ such that
(\ref{eq:l1bin}) finds that given $\x$ with overwhelming
probability. We will refer to this maximum allowable value of
$\beta$ as the \emph{weak threshold/breaking point} and will denote it by $\beta_{w}$ (see, e.g. \cite{StojnicICASSP09,StojnicCSetam09,StojnicTowBettCompSens13,StojnicICASSP10knownsupp,Stojnicl1RegPosasymldp}). Correspondingly, we also say that the algorithm exhibits the \emph{weak} phase transition (i.e. \emph{weak} PT). Under fully characterizing the weak phase transition one then considers determining the so-called \emph{weak} PT curve in $(\alpha,\beta)$ plane so that for any pair $(\alpha,\beta)$ that is below this curve the algorithm (here (\ref{eq:l1bin})) succeeds with overwhelming probability in solving (\ref{eq:l0}); otherwise it fails. In addition to the weak phase transitions, one can define various other forms of phase transitions. However, we stop short of discussing these in greater details as they will not be the main subject of this paper (more on them though can be found in e.g. \cite{DonohoPol,StojnicCSetam09,StojnicUpper10,StojnicUpperSec13,StojnicLiftStrSec13,Stojnicl1RegPosasymldp}).

As mentioned earlier and as is by now well known, \cite{DonohoPol,DonohoUnsigned,StojnicCSetam09,StojnicUpper10} fully characterized the standard $\ell_1$ PT (\cite{DonohoPol,DonohoUnsigned} through a high-dimensional geometry and \cite{StojnicCSetam09,StojnicUpper10} through a purely probabilistic approach). In \cite{StojnicISIT2010binary} we went a step further and fully characterized the binary $\ell_1$ PT as well. The following theorem summarizes the obtained characterization.
\begin{theorem}(\cite{StojnicISIT2010binary} Exact binary $\ell_1$'s weak threshold/PT)
Let $A$ be an $m\times n$ matrix in (\ref{eq:l0})
with i.i.d. standard normal components. Let
the unknown $\x$ that solves (\ref{eq:l0}) be binary $k$-sparse. Further, let the locations of nonzero elements of $\x$ be arbitrarily chosen but fixed. Assume that the nonzero elements of $\x$ are equal to one. Let $k,m,n$ be large
and let $\alpha_w=\frac{m}{n}$ and $\beta_w=\frac{k}{n}$ be constants
independent of $m$ and $n$. Let $\erfinv$ be the inverse of the standard error function associated with zero-mean unit variance Gaussian random variable.  Further, let $\alpha_w$ and $\beta_w$ satisfy the following \textbf{fundamental characterization of the \emph{binary} $\ell_1$'s PT}

\begin{center}
\shadowbox{$
\xi^{(bin)}_{\alpha_{w}}(\beta_w)\triangleq\psi^{(bin)}_{\beta_w}(\alpha_{w})\triangleq
\frac{(1-2\beta_w)\sqrt{\frac{2}{\pi}}e^{-\lp\erfinv\lp\frac{1-2\alpha_w}{1-2\beta_w}\rp\rp^2}}{2\alpha_w\sqrt{2}\erfinv \lp\frac{1-2\alpha_w}{1-2\beta_w}\rp}=1.
$}
-\vspace{-.5in}\begin{equation}
\label{eq:thmweaktheta2}
\end{equation}
\end{center}

Then:
\begin{enumerate}
\item If $\alpha>\alpha_w$ then with overwhelming probability the solution of (\ref{eq:l1bin}) is the binary $k$-sparse $\x$ that solves (\ref{eq:l0}).
\item If $\alpha<\alpha_w$ then with overwhelming probability the binary $k$-sparse $\x$ with given fixed locations of nonzero components is the solution of (\ref{eq:l0}) and is \textbf{not} the solution of (\ref{eq:l1bin}).
    \end{enumerate}
\label{thm:thmweakthr}
\end{theorem}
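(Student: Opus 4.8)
The plan is to establish the two claims via a null-space / escape-through-a-mesh style argument, which is the standard route for proving exact phase transitions of linearly constrained $\ell_1$-type recovery problems. First I would note that, since $A$ has i.i.d. standard normal entries and $\x$ is feasible for (\ref{eq:l1bin}), the binary $k$-sparse $\tilde\x$ is the unique solution of (\ref{eq:l1bin}) if and only if for every nonzero $\w$ in the null space of $A$ that keeps $\tilde\x+\w$ inside the box $[0,1]^n$, one has $\|\tilde\x+\w\|_1 > \|\tilde\x\|_1$. Writing $I$ for the support of $\tilde\x$ (where $\tilde\x_i=1$) and $I^c$ for its complement (where $\tilde\x_i=0$), feasibility of the perturbed point forces $\w_i \le 0$ on $I$ and $\w_i \ge 0$ on $I^c$, so the change in objective reduces to $-\sum_{i\in I}\w_i + \sum_{i\in I^c}\w_i = \sum_{i\in I^c}|\w_i| - \sum_{i\in I}|\w_i|$. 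Hence the recovery condition becomes: no $\w\neq \0$ in the null space of $A$ satisfies simultaneously the sign pattern above and $\sum_{i\in I^c}|\w_i| \le \sum_{i\in I}|\w_i|$. This identifies the relevant set of ``bad'' directions as a cone $\mathcal{S}$ (a subset of the sphere after normalization) determined purely by $(I, I^c)$ and the box constraints.

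Next I would invoke the high-dimensional probabilistic machinery: the null space of $A$ is a uniformly random $(n-m)$-dimensional subspace, so by a Gordon-type comparison inequality (the escape-through-a-mesh phenomenon, exactly as used in \cite{StojnicCSetam09,StojnicISIT2010binary}) the probability that this random subspace intersects the cone $\mathcal{S}$ nontrivially is controlled by comparing $\sqrt{m}$ against the Gaussian width $w(\mathcal{S}) = \mE \sup_{\w \in \mathcal{S}} \g^T\w$, where $\g$ is a standard Gaussian vector. The threshold is sharp: if $\sqrt{m}$ exceeds $w(\mathcal{S})$ by a constant factor times $\sqrt{n}$ the intersection is empty with overwhelming probability (giving claim 1), and if it falls short the intersection is nonempty with overwhelming probability (giving claim 2). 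So the whole theorem reduces to computing $w(\mathcal{S})$ in the linear regime $m=\alpha_w n$, $k = \beta_w n$ and showing that the equality $\sqrt{m} = w(\mathcal{S})$ is precisely (\ref{eq:thmweaktheta2}).

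The main computation — and the step I expect to be the real work — is the asymptotic evaluation of $w(\mathcal{S})$. The optimization defining it splits across the three index blocks: on $I$ one maximizes $\sum_{i\in I}(-g_i)\w_i$ over $\w_i \le 0$, i.e. only the coordinates with $g_i>0$ contribute and the constraint $\sum_{i\in I}|\w_i| \le \sum_{i\in I^c}|\w_i|$ couples the blocks via a single scalar (a Lagrange multiplier $\theta$). Introducing that multiplier and separating, one is led to evaluate quantities of the form $\mE(g-\theta)_+^2$ and $\mE(|g|-\theta)_+$ type integrals over Gaussians restricted to half-lines, which produce the $e^{-(\cdot)^2}$ and $\erf$ terms. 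Optimizing over $\theta$ (a concentration / saddle-point step, justified by convexity and Gaussian concentration so that the empirical sums concentrate on their means) yields the condition that the optimal $\theta$ satisfies an equation expressible through $\erfinv\!\lp\frac{1-2\alpha_w}{1-2\beta_w}\rp$; substituting back and equating with $\alpha_w$ (from $m = w(\mathcal{S})^2$ after the square-root comparison) produces exactly the shadowboxed identity. The delicate points are: (i) verifying the comparison inequality applies in both directions to get a genuine phase transition rather than just one-sided bounds — here I would lean directly on the sharp versions established in \cite{StojnicCSetam09,StojnicUpper10} and their binary counterpart in \cite{StojnicISIT2010binary}; and (ii) the bookkeeping that turns the block-separated Gaussian integrals into the closed form with $\erfinv$, which is where one must be careful that the constraint $\frac{1-2\alpha_w}{1-2\beta_w}$ stays in $(0,1)$ so that $\erfinv$ is well-defined (this is automatic in the meaningful regime $\beta_w < \alpha_w < 1/2$, and the boundary cases must be treated separately).

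Finally I would assemble the pieces: the null-space characterization plus the two-sided escape-through-a-mesh estimate plus the width computation together give that the sign of $\sqrt{m} - w(\mathcal{S})$ — equivalently the sign of $\alpha - \alpha_w$ with $\alpha_w$ defined by (\ref{eq:thmweaktheta2}) — determines whether recovery succeeds or fails with overwhelming probability, which is exactly the two enumerated conclusions.
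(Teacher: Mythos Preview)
Your proposal is correct and follows essentially the same route as the paper: the null-space characterization you derive is exactly Theorem~\ref{thm:thmknownsuppcond}, and the escape-through-a-mesh/Gordon comparison plus asymptotic evaluation of the Gaussian width $w(\h,\Sw)$ via a Lagrange multiplier (your $\theta$, the paper's $\nu$) is precisely the machinery of \cite{StojnicCSetam09,StojnicISIT2010binary,StojnicUpper10} that the paper invokes; the paper's own proof is just a citation to those works, and your sketch accurately reconstructs their content.
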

\begin{proof}
The first part was established in \cite{StojnicISIT2010binary} and the second one followed automatically from considerations in \cite{StojnicUpper10,StojnicRegRndDlt10}.
\end{proof}

\subsubsection{Doubling the number of equations}
\label{sec:propxi}

The fundamental PT characterizations given in the above theorem are indeed well defined. Namely, for any given fixed $\alpha\in (0,\frac{1}{2})$ there will be a unique $\beta\in(0,\alpha)$ such that $\xi^{(bin)}_{\alpha}(\beta)=1$ and for any given fixed $\beta\in (0,\frac{1}{2})$ there will be a unique $\alpha\in(\beta,\frac{1}{2})$ such that $\psi^{(bin)}_{\beta}(\alpha)=1$. This follows immediately after one first notes that the change $\beta\leftarrow 2\beta$ and $\alpha\leftarrow 2\alpha$ transforms the above characterizations into the corresponding ones obtained for the standard $\ell_1$ in \cite{StojnicCSetam09,StojnicUpper10,Stojnicl1RegPosasymldp} and then recalls that in \cite{Stojnicl1RegPosasymldp} it was explicitly shown that these characterizations are indeed unambiguous. What is perhaps a bit more interesting (especially from a practical point of view) is the so-called doubling the number of equations phenomenon. Namely, as the above mentioned change $\beta\leftarrow 2\beta$, $\alpha\leftarrow 2\alpha$ indicates, the binary $\ell_1$ for the same $\frac{\beta}{\alpha}$ ratio needs exactly two times smaller number of equations. This can be clearly seen from Figure \ref{fig:weakl1PTbin} where we show the theoretical PT curves for the binary $\ell_1$ that one can obtain based on (\ref{eq:thmweaktheta2}). In addition to the binary $\ell_1$ PT curve we also show the corresponding PT curves for the standard and nonnegative $\ell_1$ PTs. As arrows in Figure \ref{fig:weakl1PTbin} indicate, to achieve the same $\beta/\alpha$ ratio that the binary $\ell_1$ achieves, the standard $\ell_1$ needs exactly two times larger $\alpha$.
\begin{figure}[htb]
\centering
\centerline{\epsfig{figure=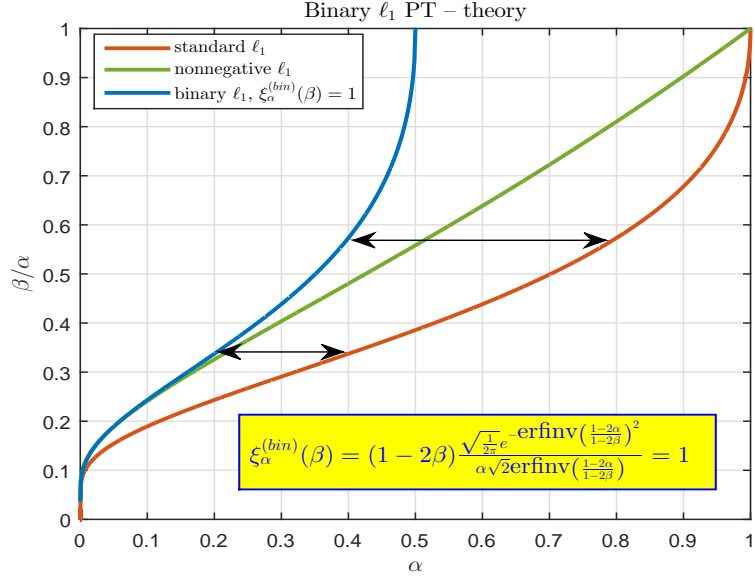,width=11.5cm,height=8cm}}
\caption{Binary $\ell_1$'s weak PT; $\{(\alpha,\beta)|\xi^{(bin)}_{\alpha}(\beta)=1\}$}
\label{fig:weakl1PTbin}
\end{figure}


\subsection{Large deviations}
\label{sec:ldp}

In this section we discuss the binary $\ell_1$ LDP characterizations that will provide a significantly richer spectrum of information about the above discussed PTs -- namely, they will explain the algorithms behaviour not only at the breaking points/thresholds but also in the entire transition zone around these points. The key difference between the standard PTs and the LDPs that we will discuss below will be in the exactness of the characterizations of the rates at which the probabilities of algorithms' success (failure) tend to zero as the systems dimensions deviate from the ones that satisfy the PT curves (i.e. the breaking points of the algorithms' success). To achieve full exactness in characterizing these rates, we will below determine the LDPs relying on the connection between the PTs and the LDPs that we established in \cite{Stojnicl1RegPosasymldp}. Consequently, we will also try to emulate the strategies designed in \cite{Stojnicl1RegPosasymldp}, Moreover, to make the exposition easier to follow, we will try to make everything look as parallel to what was done in \cite{Stojnicl1RegPosasymldp} as possible (many repetitive steps though will be skipped and the emphasis will be on those that bring the key differences).

As is usual the case with many of the strategies that we designed, we start things off by recalling on a couple of fundamentally important technical results that we established in \cite{StojnicCSetam09,StojnicICASSP09,StojnicICASSP10knownsupp,StojnicISIT2010binary}. To ensure the clarity and simplicity of the exposition, we will without loss of generality assume that the elements $\x_{1},\x_{2},\dots,\x_{n-k}$ of $\x$ are equal to zero and that the elements $\x_{n-k+1},\x_{n-k+2},\dots,\x_n$ are all equal to one (we emphasize that it is of course not known to the algorithm beforehand which elements are equal to one, however it is assumed to be known that each element of the unknown vector $\x$ in (\ref{eq:l1bin}) is either zero or one; the above assumption is of course only for the concreteness purposes of the analysis that will be presented below and is of course in an agreement with the requirement that the weak phase transition imposes). Relying on the breakthrough observations of \cite{StojnicCSetam09,StojnicICASSP09}, we in \cite{StojnicISIT2010binary} established the following theorem which is one of the key engines behind the entire machinery developed in \cite{StojnicCSetam09,StojnicICASSP09,StojnicISIT2010binary}.
\begin{theorem}(\cite{StojnicCSetam09,StojnicICASSP09,StojnicISIT2010binary} Nonzero elements of binary $\x$ have fixed location)
Assume that an $m\times n$ system matrix $A$ is given. Let $\x$
be a binary $k$ sparse vector. Also let $\x_1=\x_2=\dots=\x_{n-k}=0$. Assume that the nonzero elements of $\x$ are equal to one. Further, assume that $\y\triangleq A\x$ and that $\w$ is
an $n\times 1$ vector such that $\w_i\geq 0,1\leq i\leq n-k$, and $\w_i\leq 0,n-k+1\leq i\leq n$. If
\begin{equation}
(\forall \w\in \textbf{R}^n | A\w=0) \quad  -\sum_{i=n-k+1}^{n} \w_i<\sum_{i=1}^{n-k}\w_{i},
\end{equation}
then the solutions of (\ref{eq:l0}) and (\ref{eq:l1bin}) coincide. Moreover, if
\begin{equation}
(\exists \w\in \textbf{R}^n | A\w=0) \quad  -\sum_{i=n-k+1}^{n} \w_i\geq \sum_{i=1}^{n-k}\w_{i},
\label{eq:thmeqgen}
\end{equation}
then the solution of (\ref{eq:l0}) and is not the solution of (\ref{eq:l1bin}).
\label{thm:thmknownsuppcond}
\end{theorem}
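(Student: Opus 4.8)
The plan is to derive this characterization directly from the optimality (KKT) conditions for the linear program in (\ref{eq:l1bin}), combined with the representation of the null space of $A$. First I would observe that $\x$ (with $\x_1=\cdots=\x_{n-k}=0$ and $\x_{n-k+1}=\cdots=\x_n=1$) is the unique solution of (\ref{eq:l1bin}) if and only if for every feasible perturbation direction $\w$ (i.e. every $\w$ with $A\w=0$ that keeps $\x+t\w$ inside the box $[0,1]^n$ for small $t>0$) the objective strictly increases, $\|\x+t\w\|_1>\|\x\|_1$. Since $\x_i=0$ for $i\le n-k$, feasibility forces $\w_i\ge 0$ there, and since $\x_i=1$ for $i>n-k$, feasibility forces $\w_i\le 0$ there; these are exactly the sign constraints imposed in the statement. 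Under those sign constraints, $\|\x+t\w\|_1-\|\x\|_1 = t\big(\sum_{i=1}^{n-k}\w_i + \sum_{i=n-k+1}^{n}(-\w_i)\big)$ for $t$ small, because each term in the first block becomes $|t\w_i|=t\w_i$ and each term in the second block becomes $|1+t\w_i|=1+t\w_i$, contributing $t\w_i=-t(-\w_i)$ relative to the value $1$.

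The next step is to translate the ``for all sign-restricted $\w$ in the null space'' quantifier into the stated inequality. The condition that the directional derivative be strictly positive for every such $\w$ is precisely
\begin{equation}
(\forall \w\in \mathbf{R}^n \mid A\w=0,\ \w_i\ge 0\ (i\le n-k),\ \w_i\le 0\ (i> n-k)) \quad -\sum_{i=n-k+1}^{n}\w_i < \sum_{i=1}^{n-k}\w_i ,
\end{equation}
which is exactly the sufficiency hypothesis in the theorem (the sign restrictions being folded into the ambient assumption on $\w$ stated just before the displayed condition). Conversely, if there exists a sign-restricted null-space vector $\w$ with $-\sum_{i=n-k+1}^{n}\w_i \ge \sum_{i=1}^{n-k}\w_i$, then moving along $\w$ does not increase (and by a standard tie-breaking/scaling argument can be made to strictly decrease, or at least not to uniquely identify $\x$) the $\ell_1$ objective while staying feasible, so $\x$ fails to be the solution of (\ref{eq:l1bin}); this gives the ``not the solution'' half. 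I would also note that $\x$ remains the $\ell_0$-minimizer in the second case because that is a separate combinatorial fact about sparsity that does not depend on the $\ell_1$ relaxation.

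The main technical care — rather than a genuine obstacle — is the handling of the boundary/degenerate cases in the ``only if'' direction: when the directional derivative is exactly zero one must argue that feasibility along $\w$ (or $-\w$, where still feasible) produces an alternative optimal point, so that $\x$ is not \emph{the} (unique) solution; here one uses that the feasible region is a polytope and the objective is piecewise linear, so a zero first-order change along an edge direction yields a segment of optima. A second point to be careful about is that the perturbation analysis only needs directions $\w$ that are feasible for small $t$; any $\w\in\ker A$ can be rescaled so that $\x+t\w$ stays in the box, so restricting to small $t$ loses nothing, and the sign pattern of $\w$ is the only genuine constraint. Assembling these pieces yields both implications, and the theorem follows; the quantitative PT formula (\ref{eq:thmweaktheta2}) is then obtained — as indicated in the text — by a separate probabilistic (escape-through-a-mesh / Gaussian comparison) analysis of when the displayed null-space inequality holds with overwhelming probability over random $A$, which is the content attributed to \cite{StojnicISIT2010binary}.
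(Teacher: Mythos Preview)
Your overall strategy is the standard one (the paper does not give its own argument here, it only cites prior work), and the sign pattern you identify on $\w$ is correct. But there is a sign slip in your key display: you write
\[
\|\x+t\w\|_1-\|\x\|_1 \;=\; t\Big(\sum_{i=1}^{n-k}\w_i + \sum_{i=n-k+1}^{n}(-\w_i)\Big),
\]
whereas your own explanation (``contributing $t\w_i$'') says the second-block contribution is $t\w_i$, not $t(-\w_i)$. The right-hand side should be $t\sum_{i=1}^{n}\w_i$. This is not cosmetic: with the wrong sign the displayed quantity is nonnegative for \emph{every} sign-constrained $\w$, so the sufficiency condition would be vacuous and the theorem's inequality would not follow from it. With the correct sign, requiring strict positivity for every nonzero sign-constrained $\w\in\ker A$ is precisely $-\sum_{i>n-k}\w_i<\sum_{i\le n-k}\w_i$, and the converse direction goes through as you describe.

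A small simplification you may want: since every feasible $\z$ in (\ref{eq:l1bin}) has $\z_i\ge 0$, in fact $\|\z\|_1=\sum_i\z_i$ is linear on the feasible polytope, so $\|\x+\w\|_1-\|\x\|_1=\sum_i\w_i$ holds exactly for any feasible $\w$ and the ``small $t$'' expansion is unnecessary. Finally, your closing paragraph about (\ref{eq:thmweaktheta2}) concerns Theorem~\ref{thm:thmweakthr}, not the present statement, and should be dropped.
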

To facilitate the exposition we set
\begin{equation}
\Sw\triangleq\{\w\in S^{n-1}| \quad -\sum_{i=n-k+1}^{n} \w_i<\sum_{i=1}^{n-k}\w_{i},\quad \w_i\geq 0,1\leq i\leq (n-k), \quad \mbox{and} \quad\w_i\leq 0,(n-k)+1\leq i\leq n-k\},\label{eq:defSwpr}
\end{equation}
and as in \cite{Stojnicl1RegPosasymldp}, we first provide a detailed analysis of the so-called upper tail of the LDP characterizations (as for the standard $\ell_1$ LDPs, it will turn out that the minimal adaptations of the upper tail analysis automatically settle the lower tail as well).

\subsubsection{Upper tail}
\label{sec:uppertail}

We will first consider the LDPs upper tail, which means, the points $(\alpha,\beta)$ such that $\alpha\geq \alpha_w$ where $\alpha_w$ is such that $\psi^{(bin)}_{\beta}(\alpha_w)=\xi^{(bin)}_{\alpha_w}(\beta)=1$. Assuming that the elements of $A$ are i.i.d. standard normals and following \cite{Stojnicl1RegPosasymldp}, we write
\begin{equation}
P_{err}\triangleq P(\min_{\w\in S_w}\|A\w\|_2\leq 0)=P(\max_{\w\in S_w}\min_{\|\y\|_2=1}(\y^T A\w )\geq 0)\leq
\min_{c_3\geq 0} e^{-\frac{c_3^2}{2}}Ee^{-c_3\|\g\|_2}Ee^{c_3w(\h,S_w)},
\label{eq:ldpprob3}
\end{equation}
where $P_{err}$ is the so-called probability of error/failure, i.e. the probability that (\ref{eq:l1bin}) fails to produce the solution of (\ref{eq:l0}) and
\begin{eqnarray}
w(\h,\Sw)\triangleq\max_{\w\in \Sw} (\h^T\w) = \max_{\bar{\y}\in \mR^{n}} & &  \sum_{i=1}^{n} \h_i \bar{\y}_i\nonumber \\
\mbox{subject to} &  & \bar{\y}_i\geq 0, 0\leq i\leq n\nonumber \\
& & \sum_{i=n-k+1}^{n}\bar{\y}_i\geq \sum_{i=1}^{n-k} \bar{\y}_i \nonumber \\
& & \sum_{i=1}^{n}\bar{\y}_i^2\leq 1,\label{eq:workww2}
\end{eqnarray}
with the elements of $\h$ being the i.i.d. standard normals. As in \cite{StojnicCSetam09,StojnicCSetamBlock09,StojnicBlockasymldpfinn15,StojnicLiftStrSec13,StojnicICASSP10knownsupp,StojnicTowBettCompSens13,Stojnicl1RegPosasymldp,StojnicISIT2010binary} one writes
\begin{eqnarray}
w(\h,\Sw) = -\max_{\nu\geq 0,\gamma\geq 0}\min_{\bar{\y}} & & \sum_{i=1}^{n} -\h_i \bar{\y}_i+\nu\sum_{i=1}^{n-k}\bar{\y}_i
-\nu\sum_{i=n-k+1}^{n}\bar{\y}_i+\gamma\sum_{i=1}^{n}\bar{\y}_i^2-\gamma\nonumber \\
\mbox{subject to} & & \bar{\y}_i\geq 0, 0\leq i\leq n,\label{eq:ldpwhSw0}
\end{eqnarray}
and finally
\begin{eqnarray}
w(\h,\Sw) & = & \min_{\nu\geq0,\gamma\geq 0} \frac{\sum_{i=1}^{n-k}\max(\h_i-\nu,0)^2+\sum_{i=n-k+1}^{n}\max(\h_i+\nu,0)^2}{4\gamma}+\gamma\nonumber \\
& = & \min_{\nu\geq0}\sqrt{\sum_{i=1}^{n-k}\max(\h_i-\nu,0)^2+\sum_{i=n-k+1}^{n}\max(\h_i+\nu,0)^2}.\label{eq:ldpwhSw}
\end{eqnarray}
We summarize the above methodology to upper bound $P_{err}$ in the following theorem.
\begin{theorem}
Let $A$ be an $m\times n$ matrix in (\ref{eq:l0})
with i.i.d. standard normal components. Let
the unknown $\x$ in (\ref{eq:l0}) be binary $k$-sparse and let the locations of nonzero elements of $\x$ be arbitrarily chosen but fixed. Assume that the nonzero elements of $\x$ are equal to one. Let $P_{err}$ be the probability that the solution of (\ref{eq:l1bin}) is not the solution of (\ref{eq:l0}). Then
\begin{equation}
P_{err} \leq  \min_{c_3\geq 0}e^{-\frac{c_3^2}{2}}e^{-c_3\|\g\|_2}Ee^{c_3w(\h,S_w)}=\min_{c_3\geq 0}\left (e^{-\frac{c_3^2}{2}}\frac{1}{\sqrt{2\pi}^m}\int_{\g}e^{-\sum_{i=1}^{m}\g_i^2/2-c_3\|\g\|_2}d\g \min_{\nu\geq 0,\gamma\geq\frac{c_3}{2}} w_1^{n-k}w_2^{k}e^{c_3\gamma}\right ),
\label{eq:ldpthm1perrub1}
\end{equation}
where
\begin{eqnarray}
w_1 &=& \frac{1}{\sqrt{2\pi}}\int_{h}e^{-h^2/2}e^{c_3\max(h-\nu,0)^2/4/\gamma}dh
  =\frac{1}{2}\lp\frac{e^{\frac{c_3\nu^2/4/\gamma}{1-c_3/2/\gamma}}}{\sqrt{1-c_3/2/\gamma}}\erfc\left (\frac{\nu}{\sqrt{2}\sqrt{1-c_3/2/\gamma}}\right )+\erf\left (\frac{\nu}{\sqrt{2}}\right )+1\rp\nonumber \\
w_2 &=& \frac{1}{\sqrt{2\pi}}\int_{h}e^{-h^2/2}e^{c_3\max(h+\nu,0)^2/4/\gamma}dh
  =\frac{1}{2}\lp\frac{e^{\frac{c_3\nu^2/4/\gamma}{1-c_3/2/\gamma}}}{\sqrt{1-c_3/2/\gamma}}\erfc\left (\frac{-\nu}{\sqrt{2}\sqrt{1-c_3/2/\gamma}}\right )+\erf\left (\frac{-\nu}{\sqrt{2}}\right )+1\rp.\nonumber \\
\label{eq:ldpthm1perrub2}
\end{eqnarray}\label{thm:ldp1}
\end{theorem}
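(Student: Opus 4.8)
The plan is to assemble the displayed bound from the comparison/Chernoff chain already set up in the lead-up, and then to carry out the one-dimensional Gaussian integrals that turn the abstract expectations into the explicit expressions $w_1,w_2$.

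First I would take as the starting point the bound (\ref{eq:ldpprob3}), $P_{err}\leq\min_{c_3\geq 0}e^{-c_3^2/2}\,Ee^{-c_3\|\g\|_2}\,Ee^{c_3w(\h,S_w)}$, which is obtained by writing the failure event as $\max_{\w\in S_w}\min_{\|\y\|_2=1}\y^TA\w\geq 0$, applying Markov's inequality to $e^{c_3(\cdot)}$ with $c_3\geq 0$, and invoking the Gordon-type comparison inequality to decouple the bilinear Gaussian form $\y^TA\w$ into the two independent pieces governed by $\|\g\|_2$ and by $w(\h,S_w)=\max_{\w\in S_w}\h^T\w$. The factor $Ee^{-c_3\|\g\|_2}$ is then merely rewritten as the $m$-dimensional Gaussian integral $\frac{1}{\sqrt{2\pi}^m}\int_\g e^{-\sum_i\g_i^2/2-c_3\|\g\|_2}d\g$; no further work is needed there.

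The substantive step is to upper bound $Ee^{c_3w(\h,S_w)}$. Here I would start from the dual representation (\ref{eq:ldpwhSw}) in its $\gamma$-parametrized form, $w(\h,S_w)=\min_{\nu\geq 0,\gamma\geq 0}\big(\tfrac{1}{4\gamma}\sum_{i=1}^{n-k}\max(\h_i-\nu,0)^2+\tfrac{1}{4\gamma}\sum_{i=n-k+1}^{n}\max(\h_i+\nu,0)^2+\gamma\big)$. Since $t\mapsto e^{c_3t}$ is non-decreasing for $c_3\geq 0$, the exponential of this infimum is the infimum of the exponentials, hence $Ee^{c_3w(\h,S_w)}\leq\min_{\nu\geq 0,\gamma\geq 0}E\,e^{c_3(\cdots)}$ by the elementary inequality $E\min\leq\min E$. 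For fixed $\nu,\gamma$ the exponent is separable across the i.i.d.\ standard normal coordinates of $\h$, so the expectation factorizes as $e^{c_3\gamma}$ times $n-k$ copies of $Ee^{c_3\max(\h_1-\nu,0)^2/(4\gamma)}$ (call it $w_1$) and $k$ copies of $Ee^{c_3\max(\h_1+\nu,0)^2/(4\gamma)}$ (call it $w_2$). The one-dimensional expectation defining $w_1$ is finite precisely when $\tfrac{c_3}{4\gamma}<\tfrac12$, i.e. $\gamma>\tfrac{c_3}{2}$; for $\gamma\leq c_3/2$ the bound is $+\infty$ and vacuous, which is exactly why the minimization may be restricted to $\gamma\geq c_3/2$ as in the statement.

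It then remains to evaluate $w_1$ and $w_2$ in closed form. For $w_1$ I would split the Gaussian integral at $h=\nu$: on $h\leq\nu$ the integrand reduces to $e^{-h^2/2}$, contributing $\tfrac12(1+\erf(\nu/\sqrt 2))$, while on $h>\nu$ one completes the square in $-\tfrac{h^2}{2}+\tfrac{c_3}{4\gamma}(h-\nu)^2$, whose effective quadratic coefficient $\tfrac12(1-\tfrac{c_3}{2\gamma})$ is positive under $\gamma>c_3/2$; carrying out the shift produces the residual exponential constant $\tfrac{c_3\nu^2/4/\gamma}{1-c_3/2/\gamma}$ and leaves a Gaussian tail integral that yields the $\erfc\!\big(\tfrac{\nu}{\sqrt 2\sqrt{1-c_3/2/\gamma}}\big)$ term with prefactor $\tfrac{1}{\sqrt{1-c_3/2/\gamma}}$. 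The evaluation of $w_2$ is identical after replacing $\nu$ by $-\nu$, since $\max(h+\nu,0)=\max(h-(-\nu),0)$. Substituting these two closed forms back into the factorized bound gives (\ref{eq:ldpthm1perrub1})--(\ref{eq:ldpthm1perrub2}). I expect the main obstacles to be purely bookkeeping: justifying the interchange of the infimum over $(\nu,\gamma)$ with the expectation (which is harmless since $E\min\leq\min E$ is all one needs for an upper bound) and completing the square carefully so that the convergence threshold $\gamma\geq c_3/2$ and the exact erf/erfc arguments emerge as claimed; the comparison-inequality decoupling underlying (\ref{eq:ldpprob3}) is inherited from \cite{StojnicCSetam09,Stojnicl1RegPosasymldp,StojnicISIT2010binary}.
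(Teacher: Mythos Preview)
Your proposal is correct and follows essentially the same approach as the paper: the paper's own proof simply says ``Follows from the above considerations and ultimately through the mechanisms developed in [\dots]'', where ``the above considerations'' are precisely the chain (\ref{eq:ldpprob3})--(\ref{eq:ldpwhSw}) that you have unpacked. Your explicit handling of the $E\min\leq\min E$ step, the factorization over i.i.d.\ coordinates, the convergence threshold $\gamma\geq c_3/2$, and the split-and-complete-the-square evaluation of $w_1,w_2$ are exactly the details that the paper leaves implicit.
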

\begin{proof}
Follows from the above considerations and ultimately through the mechanisms developed in \cite{StojnicCSetam09,StojnicCSetamBlock09,StojnicBlockasymldpfinn15,StojnicLiftStrSec13,StojnicICASSP10knownsupp,StojnicTowBettCompSens13,Stojnicl1RegPosasymldp,StojnicISIT2010binary}.
\end{proof}
The above theorem clearly provides an upper bound that holds for any integers $m$, $k$, and $n$ (provided $k\leq m\leq n$ so that the results make sense). Below we will be interested in the LDP type of results which naturally assume the $n\rightarrow\infty$ asymptotic regime (the same is of course true for the PT types of results and Theorem \ref{thm:thmweakthr}). Following \cite{Stojnicl1RegPosasymldp}, we consider the decay rate of $P_{err}$, namely $I^{(bin)}_{err}(\alpha,\beta)$,
\begin{equation}\label{eq:ldpasymp1}
  I^{(bin)}_{err}(\alpha,\beta)\triangleq\lim_{n\rightarrow\infty}\frac{\log{P_{err}}}{n}.
\end{equation}
and based on Theorem \ref{thm:ldp1} we have the following LDP type of theorem.
\begin{theorem}
Assume the setup of Theorem \ref{thm:ldp1}. Further, let integers $m$, $k$, and $n$ be large ($k\leq m\leq n$) such that $\beta=\frac{k}{n}$ and $\alpha=\frac{m}{n}$ are constants independent of $n$. Assume that a pair $(\alpha,\beta)$  is given. Also, assume the following scaling: $c_3\rightarrow c_3\sqrt{n}$ and $\gamma\rightarrow\gamma\sqrt{n}$. Then
\begin{eqnarray}
I^{(bin)}_{err}(\alpha,\beta) & \triangleq &\lim_{n\rightarrow\infty}\frac{\log{P_{err}}}{n}\nonumber \\
& \leq & \min_{c_3\geq 0}\left (-\frac{(c_3)^2}{2}+I_{sph}+\min_{\nu\geq 0,\gamma\geq \frac{c_3}{2}} ((1-\beta)\log{w_1}+\beta\log{w_2}+c_3\gamma)\right )
\triangleq I_{err,u}^{(bin,ub)}(\alpha,\beta),\nonumber \\
\label{eq:ldpthm2Ierrub1}
\end{eqnarray}
where
\begin{eqnarray}
I_{sph} &=& \widehat{\gamma}c_3-\frac{\alpha }{2}\log\left (1-\frac{c_3}{2\widehat{\gamma}}\right )\nonumber \\
  \widehat{\gamma} &=& \frac{c_3-\sqrt{(c_3)^2+4\alpha}}{4}\nonumber \\
w_1 &=& \frac{1}{\sqrt{2\pi}}\int_{h}e^{-h^2/2}e^{c_3\max(h-\nu,0)^2/4/\gamma}dh
  =\frac{1}{2}\lp\frac{e^{\frac{c_3\nu^2/4/\gamma}{1-c_3/2/\gamma}}}{\sqrt{1-c_3/2/\gamma}}\erfc\left (\frac{\nu}{\sqrt{2}\sqrt{1-c_3/2/\gamma}}\right )+\erf\left (\frac{\nu}{\sqrt{2}}\right )+1\rp\nonumber \\
w_2 &=& \frac{1}{\sqrt{2\pi}}\int_{h}e^{-h^2/2}e^{c_3\max(h+\nu,0)^2/4/\gamma}dh
  =\frac{1}{2}\lp\frac{e^{\frac{c_3\nu^2/4/\gamma}{1-c_3/2/\gamma}}}{\sqrt{1-c_3/2/\gamma}}\erfc\left (\frac{-\nu}{\sqrt{2}\sqrt{1-c_3/2/\gamma}}\right )+\erf\left (\frac{-\nu}{\sqrt{2}}\right )+1\rp.\nonumber \\
\label{eq:ldpthm2perrub2}
\end{eqnarray}\label{thm:ldp2}
\end{theorem}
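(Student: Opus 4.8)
The plan is to take the nonasymptotic estimate of Theorem~\ref{thm:ldp1} as the starting point and simply trace the effect of the announced scaling $c_3\to c_3\sqrt n$, $\gamma\to\gamma\sqrt n$ on each of its three factors after applying $\frac1n\log(\cdot)$ and sending $n\to\infty$. This follows the route of \cite{Stojnicl1RegPosasymldp} essentially verbatim; the only genuinely new feature is that the ``combinatorial'' factor now factorizes into an $(n-k)$-fold product of $w_1$'s (coming from the zero coordinates of $\x$) and a separate $k$-fold product of $w_2$'s (coming from the coordinates forced to equal one), rather than into a single product as in the plain $\ell_1$ case.

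Concretely, I would carry out the following steps in order. First, the prefactor $e^{-c_3^2/2}$ turns into $e^{-c_3^2 n/2}$ and hence contributes $-c_3^2/2$ to the limiting exponent. Second, in the factor $\min_{\nu\ge0,\gamma\ge c_3/2}w_1^{n-k}w_2^{k}e^{c_3\gamma}$ I would note that the ratio $c_3/\gamma$, the arguments of $w_1$ and $w_2$, and the constraint $\gamma\ge c_3/2$ are all invariant under the scaling while $\nu$ is left unscaled; therefore $w_1,w_2$ carry no $n$-dependence, $c_3\gamma$ becomes $c_3\gamma\, n$, and $\frac1n\log\!\big(w_1^{n-k}w_2^{k}e^{c_3\gamma n}\big)\to (1-\beta)\log w_1+\beta\log w_2+c_3\gamma$ because $k/n\to\beta$. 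Third, and this is the substantive computation, I would evaluate the Gaussian expectation $Ee^{-c_3\sqrt n\,\|\g\|_2}$, where $\g$ is $m$-dimensional standard normal with $m=\alpha n$: writing $\|\g\|_2=\sqrt n\,R_n$ with $R_n^2=\tfrac1n\sum_{i=1}^{m}\g_i^2$, the variable $R_n^2$ obeys a large deviations principle at speed $n$ with rate $\tfrac{\alpha}{2}\big(\tfrac{\rho^2}{\alpha}-1-\log\tfrac{\rho^2}{\alpha}\big)$ (Cram\'er for $\chi^2$ variables), so Varadhan's lemma (equivalently Laplace's method on the explicit $\chi^2$ density written in polar coordinates) gives
\[
\lim_{n\to\infty}\frac1n\log Ee^{-c_3\sqrt n\,\|\g\|_2}=\sup_{\rho>0}\Big(-c_3\rho-\tfrac{\alpha}{2}\big(\tfrac{\rho^2}{\alpha}-1-\log\tfrac{\rho^2}{\alpha}\big)\Big).
\]
The stationarity condition is $\rho^2+c_3\rho-\alpha=0$; its positive root $\rho_\ast$ is tied to the quantity in the statement through $\widehat\gamma=-\rho_\ast/2=\tfrac{c_3-\sqrt{c_3^2+4\alpha}}{4}$, and substituting $\rho_\ast$ while using $\rho_\ast(\rho_\ast+c_3)=\alpha$ reduces the supremum to $\widehat\gamma c_3-\tfrac{\alpha}{2}\log\!\big(1-\tfrac{c_3}{2\widehat\gamma}\big)=I_{sph}$. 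Adding the three contributions and retaining the outer $\min_{c_3\ge0}$ together with the inner $\min_{\nu\ge0,\gamma\ge c_3/2}$ produces exactly $I_{err,u}^{(bin,ub)}(\alpha,\beta)$.

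The main obstacle is not any single calculation but the bookkeeping needed to justify passing $\lim_{n\to\infty}$ through the two layers of minimization. One has to verify that the optimal $c_3$ stays in a bounded interval (guaranteed by the $-c_3^2/2$ term eventually dominating), that near the boundary $\gamma=c_3/2$ the integrals defining $w_1,w_2$ diverge so that the inner minimum is either attained in the interior or is a controlled limit, and that the finite-$n$ inner minima converge to the limiting inner minimum; since the objective is continuous in $(c_3,\nu,\gamma)$ away from $\gamma=c_3/2$, these are the same verifications as in \cite{Stojnicl1RegPosasymldp}, and I would invoke them there rather than reproduce the routine details, noting only the harmless modification caused by the extra factor $w_2$.
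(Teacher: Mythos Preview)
Your proposal is correct and follows precisely the route the paper intends: the paper's own proof is a one-line reference to \cite{Stojnicl1RegPosasymldp}, and what you have written is exactly the computation carried out there, with the only change being the split of the combinatorial factor into $w_1^{n-k}w_2^{k}$, which you have handled correctly. Your identification of $\widehat\gamma=-\rho_\ast/2$ and the reduction of the Varadhan supremum to $I_{sph}$ are accurate, and your remarks about passing the limit through the two minimizations are the same caveats that \cite{Stojnicl1RegPosasymldp} addresses.
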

\begin{proof} Follows in a fashion analogous to the one employed in \cite{Stojnicl1RegPosasymldp}.
\end{proof}
The above optimization problem can be solved numerically and that would be enough to provide the estimates for the rate of $P_{err}$'s decay. Instead of relying on a numerical solving we will below present an explicit solution. We will try to follow at least to some degree the methodology introduced in \cite{Stojnicl1RegPosasymldp}. However, the technical considerations will be a bit more involved and our presentation will occasionally deviate from what was presented in \cite{Stojnicl1RegPosasymldp}. Moreover, it will turn out that the optimizing quantities and eventually the LDPs rate functions will exhibit a behavior substantially different from the one observed in \cite{Stojnicl1RegPosasymldp} when the standard $\ell_1$ was considered.

\subsubsection{Determining $I_{err,u}^{(bin,ub)}$}
\label{sec:analysisIerr}

As in \cite{Stojnicl1RegPosasymldp} we start by setting
\begin{equation}\label{eq:detanalIerr1}
  A_{0}\triangleq\sqrt{1-\frac{c_3}{2\gamma}},
\end{equation}
and then consider the following optimization problem
\begin{equation}\label{eq:detanalIeer2}
I_{err,u}^{(bin,ub)}(\alpha,\beta)\triangleq \min_{c_3\geq 0,\nu\geq 0,A_0\leq 1}\zeta_{\alpha,\beta}(c_3,\nu,A_0),
\end{equation}
where
\begin{eqnarray}
\zeta_{\alpha,\beta}(c_3,\nu,A_0)&=&\left (-\frac{c_3^2}{2}+I_{sph}+(1-\beta)\log{w_1}+\beta\log{w_2}+\frac{c_3^2}{2(1-A_0^2)}\right )\nonumber \\
I_{sph} &=& \widehat{\gamma}c_3-\frac{\alpha }{2}\log\left (1-\frac{c_3}{2\widehat{\gamma}}\right )\nonumber \\
  \widehat{\gamma} &=& \frac{c_3-\sqrt{(c_3)^2+4\alpha}}{4}\nonumber \\
w_1 &=& \frac{1}{2}\lp
  \frac{e^{\frac{(1-A_0^2)\nu^2}{2A_0^2}}}{A_0}\erfc\left (\frac{\nu}{\sqrt{2}A_0}\right )+\erf\left (\frac{\nu}{\sqrt{2}}\right )+1\rp\nonumber \\
w_2 &=& \frac{1}{2}\lp
  \frac{e^{\frac{(1-A_0^2)\nu^2}{2A_0^2}}}{A_0}\erfc\left (\frac{-\nu}{\sqrt{2}A_0}\right )+\erf\left (\frac{-\nu}{\sqrt{2}}\right )+1\rp.
\label{eq:detanalIeer3}
\end{eqnarray}
To solve the optimization problem in (\ref{eq:detanalIeer2}) we will compute the derivatives of $\zeta_{\alpha,\beta}(c_3,\nu,A_0)$ with respect to $c_3$, $\nu$, and $A_0$ and solve the following system of three equations
\begin{equation}\label{eq:detanalIeer3a}
  \frac{d \zeta_{\alpha,\beta}(c_3,\nu,A_0)}{dc_3}=  \frac{d \zeta_{\alpha,\beta}(c_3,\nu,A_0)}{d\nu}=  \frac{d \zeta_{\alpha,\beta}(c_3,\nu,A_0)}{dA_0}=0.
\end{equation}
We will in optimizations below consider what will refer to as the hard regime, i.e. we will consider $(\alpha,\beta)$ that ensure that the optima of the underlying optimizations are achieved nontrivially i.e. not on the boundaries (analysis of possible boundary optima is a small subcase and highly trivial compared to what we will present throughout the paper and we leave it as an easy exercise). Now, clearly, the above problem is fairly hard and at first glance it does not seem that there is much of a reason to believe that even after presumably lengthy computation of the above derivatives one would arrive anywhere close to the explicit solution. However, by a pure magic of mathematics this will turn out to be false and one can actually indeed solve the above system of equations. Before reaching the level where this will be clear a decent amount of patience may be needed. A few quick observations will also turn out to be very useful. We start with one of them that relates to the derivative with respect to $c_3$ and observe that for this derivative from \cite{Stojnicl1RegPosasymldp} one immediately has
\begin{equation}
\frac{d\zeta_{\alpha,\beta}(c_3,\nu,A_0)}{dc_3}=-c_3+\frac{c_3}{1-A_0^2}+\frac{c_3-\sqrt{(c_3)^2+4\alpha}}{2}.
\label{eq:detanalIeer9}
\end{equation}
Setting further the above derivative to zero implies
\begin{equation}\label{eq:detanalIeer12}
  c_3=\frac{(1-A_0^2)\sqrt{\alpha}}{A_0}.
\end{equation}
The derivatives with respect to $\nu$ and $A_0$ are a bit more involved. For the derivative with respect to $\nu$ we have
\begin{eqnarray}
\frac{d\zeta_{\alpha,\beta}(c_3,\nu,A_0)}{d\nu}&=&\frac{d}{d\nu}\left (-\frac{c_3^2}{2}+I_{sph}+(1-\beta)\log{w_1}+\beta\log{w_2}+\frac{c_3^2}{2(1-A_0^2)}\right )\nonumber \\
&=& \frac{1-\beta}{w_1}\left (\frac{(1-A_0^2)\nu}{A_0^2}\frac{e^{\frac{(1-A_0^2)\nu^2}{2A_0^2}}}{A_0}\erfc\left (\frac{\nu}{\sqrt{2}A_0}\right )-\frac{1-A_0^2}{A_0^2}\frac{2e^{-\frac{\nu^2}{2}}}{\sqrt{2}\sqrt{\pi}}\right)\nonumber \\
&& -\frac{\beta}{w_2}\left (-\frac{(1-A_0^2)\nu}{A_0^2}\frac{e^{\frac{(1-A_0^2)\nu^2}{2A_0^2}}}{A_0}\erfc\left (\frac{-\nu}{\sqrt{2}A_0}\right )-\frac{1-A_0^2}{A_0^2}\frac{2e^{-\frac{\nu^2}{2}}}{\sqrt{2}\sqrt{\pi}}\right ).
\label{eq:detanalIeer4}
\end{eqnarray}
To facilitate the exposition we set
\begin{eqnarray}
y_1 & = & \frac{\nu}{\sqrt{2}}\nonumber \\
y_2 & = & \frac{\nu}{\sqrt{2}A_0}=\frac{y_1}{A_0}.\label{eq:detanalIeer4a}
\end{eqnarray}
Then we also set
\begin{eqnarray}
z_{1,\nu} & = & \frac{(1-\beta)y_1\lp\sqrt{2}y_2e^{y_2^2}\erfc(y_2)-\sqrt{\frac{2}{\pi}}\rp}{y_2e^{y_2^2}\erfc(y_2)+y_1e^{y_1^2}\erfc(-y_1)}\nonumber \\
z_{2,\nu} & = & -\frac{\beta y_1\lp -\sqrt{2}y_2e^{y_2^2}\erfc(-y_2)-\sqrt{\frac{2}{\pi}}\rp}{y_2e^{y_2^2}\erfc(-y_2)+y_1e^{y_1^2}\erfc(y_1)}.\label{eq:detanalIeer4b}
\end{eqnarray}
Combining (\ref{eq:detanalIeer3}), (\ref{eq:detanalIeer4}), (\ref{eq:detanalIeer4a}), and (\ref{eq:detanalIeer4b}) we obtain
\begin{eqnarray}
\frac{d\zeta_{\alpha,\beta}(c_3,\nu,A_0)}{d\nu}
&=& \frac{1-\beta}{w_1}\left (\frac{(1-A_0^2)\nu}{A_0^2}\frac{e^{\frac{(1-A_0^2)\nu^2}{2A_0^2}}}{A_0}\erfc\left (\frac{\nu}{\sqrt{2}A_0}\right )-\frac{1-A_0^2}{A_0^2}\frac{2e^{-\frac{\nu^2}{2}}}{\sqrt{2}\sqrt{\pi}}\right)\nonumber \\
&& -\frac{\beta}{w_2}\left (-\frac{(1-A_0^2)\nu}{A_0^2}\frac{e^{\frac{(1-A_0^2)\nu^2}{2A_0^2}}}{A_0}\erfc\left (\frac{-\nu}{\sqrt{2}A_0}\right )-\frac{1-A_0^2}{A_0^2}\frac{2e^{-\frac{\nu^2}{2}}}{\sqrt{2}\sqrt{\pi}}\right )\nonumber \\
& = & \frac{1-A_0^2}{A_0^2}\lp z_{1,\nu}+z_{2,\nu} \rp.
\label{eq:detanalIeer4c}
\end{eqnarray}
From (\ref{eq:detanalIeer4b}) and (\ref{eq:detanalIeer4c}) we then also have
\begin{eqnarray}
& & \frac{d\zeta_{\alpha,\beta}(c_3,\nu,A_0)}{d\nu}
 =  \frac{1-A_0^2}{A_0^2}\lp z_{1,\nu}+z_{2,\nu} \rp=0\nonumber \\
& \Longrightarrow & \lp z_{1,\nu}+z_{2,\nu} \rp=\frac{(1-\beta)y_1\lp\sqrt{2}y_2e^{y_2^2}\erfc(y_2)-\sqrt{\frac{2}{\pi}}\rp}{y_2e^{y_2^2}\erfc(y_2)+y_1e^{y_1^2}\erfc(-y_1)}
-\frac{\beta y_1\lp -\sqrt{2}y_2e^{y_2^2}\erfc(-y_2)-\sqrt{\frac{2}{\pi}}\rp}{y_2e^{y_2^2}\erfc(-y_2)+y_1e^{y_1^2}\erfc(y_1)}=0.\nonumber \\
\label{eq:detanalIeer4d}
\end{eqnarray}
Now we switch to the derivative with respect to $A_0$
\begin{eqnarray}
\frac{d\zeta_{\alpha,\beta}(c_3,\nu,A_0)}{dA_0}&=&\frac{d}{dA_0}\left (-\frac{c_3^2}{2}+I_{sph}+(1-\beta)\log{w_1}+\beta\log{w_2}+\frac{c_3^2}{2(1-A_0^2)}\right )\nonumber \\
&=& (1-\beta)\frac{d\log{w_1}}{dA_0}+\beta\frac{d\log{w_2}}{dA_0}+\frac{c_3^2A_0}{(1-A_0^2)^2}\nonumber \\
&=& (1-\beta)\frac{d\log{w_1}}{dA_0}+\beta\frac{d\log{w_2}}{dA_0}+\frac{\alpha}{A_0}.\nonumber \\
\label{eq:detanalIeer10}
\end{eqnarray}
From \cite{Stojnicl1RegPosasymldp} we have
\begin{eqnarray}
\frac{d\log{w_1}}{dA_0}=\frac{d\log{(\frac{1}{A_0}e^{\frac{\nu^2}{2A_0^2}}\erfc(\frac{\nu}{\sqrt{2}A_0})+e^{\frac{\nu^2}{2}}(\erf(\frac{\nu}{\sqrt{2}})+1))}}{dA_0}=
-\frac{e^{\frac{\nu^2}{2A_0^2}}(A_0^2+\nu^2)\erfc(\frac{\nu}{\sqrt{2}A_0})-\sqrt{\frac{2}{\pi}}A_0\nu}
{A_0^3(e^{\frac{\nu^2}{2A_0^2}}\erfc(\frac{\nu}{\sqrt{2}A_0})+A_0e^{\frac{\nu^2}{2}}(\erf(\frac{\nu}{\sqrt{2}})+1))}.\nonumber \\
\label{eq:detanalIeer11a}
\end{eqnarray}
Analogously to (\ref{eq:detanalIeer11a}) we also have
\begin{eqnarray}
\frac{d\log{w_2}}{dA_0}=\frac{d\log{(\frac{1}{A_0}e^{\frac{\nu^2}{2A_0^2}}\erfc(\frac{-\nu}{\sqrt{2}A_0})+e^{\frac{\nu^2}{2}}(\erf(\frac{-\nu}{\sqrt{2}})+1))}}{dA_0}=
-\frac{e^{\frac{\nu^2}{2A_0^2}}(A_0^2+\nu^2)\erfc(\frac{-\nu}{\sqrt{2}A_0})+\sqrt{\frac{2}{\pi}}A_0\nu}
{A_0^3(e^{\frac{\nu^2}{2A_0^2}}\erfc(\frac{-\nu}{\sqrt{2}A_0})+A_0e^{\frac{\nu^2}{2}}(\erf(\frac{-\nu}{\sqrt{2}})+1))}.\nonumber \\
\label{eq:detanalIeer11b}
\end{eqnarray}
Similarly to what was done in (\ref{eq:detanalIeer4b}) we set
\begin{eqnarray}
z_{1,A_0} & = &
-(1-\beta)\frac{y_2^2}{y_1}\frac{\lp(1+2y_2^2)e^{y_2^2}\erfc(y_2)-\sqrt{2}y_2\sqrt{\frac{2}{\pi}}\rp}{(y_2e^{y_2^2}\erfc(y_2)+y_1 e^{y_1^2}\erfc(-y_1))}
\nonumber \\
z_{2,A_0} & = & -\beta\frac{y_2^2}{y_1}\frac{\lp (1+2y_2^2)e^{y_2^2}\erfc(-y_2)+\sqrt{2}y_2\sqrt{\frac{2}{\pi}}\rp}{y_2e^{y_2^2}\erfc(-y_2)+y_1e^{y_1^2}\erfc(y_1)}.\label{eq:detanalIeer11c}
\end{eqnarray}
Combining (\ref{eq:detanalIeer4}), (\ref{eq:detanalIeer10}), (\ref{eq:detanalIeer11a}), (\ref{eq:detanalIeer11b}), and (\ref{eq:detanalIeer11c}) we obtain
\begin{eqnarray}
\frac{d\zeta_{\alpha,\beta}(c_3,\nu,A_0)}{dA_0}
= (1-\beta)\frac{d\log{w_1}}{dA_0}+\beta\frac{d\log{w_2}}{dA_0}+\frac{\alpha}{A_0}=z_{1,A_0}+z_{1,A_0}+\frac{\alpha y_2}{y_1}.\nonumber \\
\label{eq:detanalIeer11d}
\end{eqnarray}
From (\ref{eq:detanalIeer11c}) and (\ref{eq:detanalIeer11d}) we also have
\begin{eqnarray}
& & \frac{d\zeta_{\alpha,\beta}(c_3,\nu,A_0)}{dA_0}
 =  z_{1,A_0}+z_{1,A_0}+\frac{\alpha y_2}{y_1}=0\nonumber \\
& \Longrightarrow &  -(1-\beta)y_2\frac{\lp(1+2y_2^2)e^{y_2^2}\erfc(y_2)-\sqrt{2}y_2\sqrt{\frac{2}{\pi}}\rp}{y_2e^{y_2^2}\erfc(y_2)+y_1 e^{y_1^2}\erfc(-y_1)}
-\beta y_2\frac{\lp (1+2y_2^2)e^{y_2^2}\erfc(-y_2)+\sqrt{2}y_2\sqrt{\frac{2}{\pi}}\rp}{y_2e^{y_2^2}\erfc(-y_2)+y_1e^{y_1^2}\erfc(y_1)}+
\alpha
=0.\nonumber \\
\label{eq:detanalIeer11e}
\end{eqnarray}
Combining further (\ref{eq:detanalIeer4d}) and (\ref{eq:detanalIeer11e}) we also have
\begin{eqnarray}
& & \frac{d\zeta_{\alpha,\beta}(c_3,\nu,A_0)}{dA_0}
 =  z_{1,A_0}+z_{1,A_0}+\frac{\alpha y_2}{y_1}=0\nonumber \\
& \Longrightarrow &  -(1-\beta)y_2\frac{ e^{y_2^2}\erfc(y_2)}{y_2e^{y_2^2}\erfc(y_2)+y_1 e^{y_1^2}\erfc(-y_1)}
-\beta y_2\frac{e^{y_2^2}\erfc(-y_2)}{y_2e^{y_2^2}\erfc(-y_2)+y_1e^{y_1^2}\erfc(y_1)}+
\alpha
=0\nonumber \\
& \Longrightarrow &  -(1-\beta)\frac{\sqrt{\frac{1}{\pi}}}{y_2e^{y_2^2}\erfc(y_2)+y_1 e^{y_1^2}\erfc(-y_1)}
+\beta \frac{\sqrt{\frac{1}{\pi}}}{y_2e^{y_2^2}\erfc(-y_2)+y_1e^{y_1^2}\erfc(y_1)}+
\alpha
=0\nonumber \\
& \Longrightarrow &  -\frac{1-\beta}{y_2e^{y_2^2}\erfc(y_2)+y_1 e^{y_1^2}\erfc(-y_1)}
+ \frac{\beta}{y_2e^{y_2^2}\erfc(-y_2)+y_1e^{y_1^2}\erfc(y_1)}+
\alpha\sqrt{\pi}
=0.\nonumber \\
\label{eq:detanalIeer11f}
\end{eqnarray}
From (\ref{eq:detanalIeer4d}) we further have
\begin{eqnarray}
\frac{\beta}{y_2e^{y_2^2}\erfc(-y_2)+y_1e^{y_1^2}\erfc(y_1)}=
\frac{(1-\beta)}{(y_2e^{y_2^2}\erfc(y_2)+y_1e^{y_1^2}\erfc(-y_1))}
\frac{\lp\sqrt{2}y_2e^{y_2^2}\erfc(y_2)-\sqrt{\frac{2}{\pi}}\rp}{\lp -\sqrt{2}y_2e^{y_2^2}\erfc(-y_2)-\sqrt{\frac{2}{\pi}}\rp}.\nonumber \\
\label{eq:detanalIeer11g}
\end{eqnarray}
Combining further (\ref{eq:detanalIeer11f}) and (\ref{eq:detanalIeer11g})
\begin{eqnarray}
& & \frac{d\zeta_{\alpha,\beta}(c_3,\nu,A_0)}{dA_0}
 =  z_{1,A_0}+z_{1,A_0}+\frac{\alpha y_2}{y_1}=0\nonumber \\
& \Longrightarrow &  -\frac{1-\beta}{y_2e^{y_2^2}\erfc(y_2)+y_1 e^{y_1^2}\erfc(-y_1)}
+ \frac{\beta}{y_2e^{y_2^2}\erfc(-y_2)+y_1e^{y_1^2}\erfc(y_1)}+
\alpha\sqrt{\pi}
=0\nonumber \\
& \Longrightarrow &  \frac{1-\beta}{y_2e^{y_2^2}\erfc(y_2)+y_1 e^{y_1^2}\erfc(-y_1)}
\lp -1+
\frac{\lp\sqrt{2}y_2e^{y_2^2}\erfc(y_2)-\sqrt{\frac{2}{\pi}}\rp}{\lp -\sqrt{2}y_2e^{y_2^2}\erfc(-y_2)-\sqrt{\frac{2}{\pi}}\rp}\rp+
\alpha\sqrt{\pi}
=0\nonumber \\
& \Longrightarrow &  y_2e^{y_2^2}\erfc(y_2)+y_1 e^{y_1^2}\erfc(-y_1)=\frac{1-\beta}{\alpha}
\lp
\frac{y_2e^{y_2^2}\erfc(-y_2)+y_2e^{y_2^2}\erfc(y_2)}{\sqrt{\pi}y_2e^{y_2^2}\erfc(-y_2)+1}\rp\nonumber \\
& \Longrightarrow &  y_1 e^{y_1^2}\erfc(-y_1)=\frac{1-\beta}{\alpha}
\lp
\frac{2y_2e^{y_2^2}}{\sqrt{\pi}y_2e^{y_2^2}\erfc(-y_2)+1}\rp-y_2e^{y_2^2}\erfc(y_2)\triangleq f_1(y_2;\alpha,\beta).\nonumber \\
\label{eq:detanalIeer11h}
\end{eqnarray}
One can also combine (\ref{eq:detanalIeer11f}) and (\ref{eq:detanalIeer11g}) in the following alternative way to obtain
\begin{eqnarray}
& & \frac{d\zeta_{\alpha,\beta}(c_3,\nu,A_0)}{dA_0}
 =  z_{1,A_0}+z_{1,A_0}+\frac{\alpha y_2}{y_1}=0\nonumber \\
& \Longrightarrow &  -\frac{1-\beta}{y_2e^{y_2^2}\erfc(y_2)+y_1 e^{y_1^2}\erfc(-y_1)}
+ \frac{\beta}{y_2e^{y_2^2}\erfc(-y_2)+y_1e^{y_1^2}\erfc(y_1)}+
\alpha\sqrt{\pi}
=0\nonumber \\
& \Longrightarrow &  \frac{\beta}{y_2e^{y_2^2}\erfc(-y_2)+y_1 e^{y_1^2}\erfc(y_1)}
\lp 1-
\frac{\lp -\sqrt{2}y_2e^{y_2^2}\erfc(-y_2)-\sqrt{\frac{2}{\pi}}\rp}{\lp\sqrt{2}y_2e^{y_2^2}\erfc(y_2)-\sqrt{\frac{2}{\pi}}\rp}\rp+
\alpha\sqrt{\pi}
=0\nonumber \\
& \Longrightarrow &  y_2e^{y_2^2}\erfc(-y_2)+y_1 e^{y_1^2}\erfc(y_1)=\frac{\beta}{\alpha}
\lp
\frac{y_2e^{y_2^2}\erfc(-y_2)+y_2e^{y_2^2}\erfc(y_2)}{-\sqrt{\pi}y_2e^{y_2^2}\erfc(y_2)+1}\rp\nonumber \\
& \Longrightarrow &  y_1 e^{y_1^2}\erfc(y_1)=\frac{\beta}{\alpha}
\lp
\frac{2y_2e^{y_2^2}}{-\sqrt{\pi}y_2e^{y_2^2}\erfc(y_2)+1}\rp-y_2e^{y_2^2}\erfc(-y_2)=-f_1(-y_2;\alpha,1-\beta).\nonumber \\
\label{eq:detanalIeer11i}
\end{eqnarray}
From (\ref{eq:detanalIeer11h}) and (\ref{eq:detanalIeer11i}) one also has
\begin{equation}
\frac{1+\erf(y_1)}{1-\erf(y_1)}= \frac{\erfc(-y_1)}{\erfc(y_1)}=-\frac{f_1(y_2;\alpha,\beta)}{f_1(-y_2;\alpha,1-\beta)}. \\
\label{eq:detanalIeer11j}
\end{equation}
Finally after solving over $\erf(y_1)$ we obtain
\begin{eqnarray}
& & \erf(y_1)  =
\frac{f_1(y_2;\alpha,\beta)+f_1(-y_2;\alpha,1-\beta)}{f_1(y_2;\alpha,\beta)-f_1(-y_2;\alpha,1-\beta)}\nonumber \\
&\Longleftrightarrow & y_1  =\erfinv\lp
\frac{f_1(y_2;\alpha,\beta)+f_1(-y_2;\alpha,1-\beta)}{f_1(y_2;\alpha,\beta)-f_1(-y_2;\alpha,1-\beta)}\rp.
\label{eq:detanalIeer11k}
\end{eqnarray}
A combination of (\ref{eq:detanalIeer11h}) (or (\ref{eq:detanalIeer11i})) and (\ref{eq:detanalIeer11k}) gives
\begin{eqnarray}
2\erfinv\lp
\frac{f_1(y_2;\alpha,\beta)+f_1(-y_2;\alpha,1-\beta)}{f_1(y_2;\alpha,\beta)-f_1(-y_2;\alpha,1-\beta)}\rp
\frac{e^{\erfinv\lp
\frac{f_1(y_2;\alpha,\beta)+f_1(-y_2;\alpha,1-\beta)}{f_1(y_2;\alpha,\beta)-f_1(-y_2;\alpha,1-\beta)}\rp^2}}
{f_1(y_2;\alpha,\beta)-f_1(-y_2;\alpha,1-\beta)}
=1,
\label{eq:detanalIeer11l}
\end{eqnarray}
which can be used to determine $y_2$. Once $y_2$ is determined one can obtain $y_1$ from (\ref{eq:detanalIeer11h}) and (\ref{eq:detanalIeer11i}) (basically (\ref{eq:detanalIeer11k})) and using (\ref{eq:detanalIerr1}), (\ref{eq:detanalIeer12}), and (\ref{eq:detanalIeer4a}), $\nu$, $A_0$, $c_3$, and $\gamma$ from the following
\begin{eqnarray}
  \nu & = & y_2\sqrt{2} \nonumber \\
  A_0 & = & \frac{y_1}{y_2} \nonumber \\
  c_3 & = & \frac{(1-A_0^2)\sqrt{\alpha}}{A_0}\nonumber \\
  \gamma & = & \frac{c_3}{2(1-A_0)}.
\label{eq:detanalIeer11m}
\end{eqnarray}
For the above choice one can then finally compute $\zeta_{\alpha,\beta}(c_3,\nu,A_0)$ using (\ref{eq:detanalIeer3}) in the following way. First, we note that from \cite{Stojnicl1RegPosasymldp} one has
\begin{equation}\label{eq:detanalIeer11ma}
  I_{sph}=-\frac{(1-A_0^2)\alpha}{2}+\alpha\log(A_0).
\end{equation}
Now, using (\ref{eq:detanalIeer11h}), (\ref{eq:detanalIeer11i}), (\ref{eq:detanalIeer11m}), and (\ref{eq:detanalIeer11ma}) we have
\begin{eqnarray}
I^{(bun,ub)}_{err,u}=\zeta_{\alpha,\beta}(c_3,\nu,A_0)&=& -\frac{c_3^2}{2}+I_{sph}+(1-\beta)\log{w_1}+\beta\log{w_2}+\frac{c_3^2}{2(1-A_0^2)}\nonumber \\
& = & -\frac{c_3^2}{2}-\frac{(1-A_0^2)\alpha}{2}+\alpha\log(A_0)+(1-\beta)\log{w_1}+\beta\log{w_2}+\frac{c_3^2}{2(1-A_0^2)} \nonumber \\
& = & \alpha\log(A_0)+(1-\beta)\log{w_1}+\beta\log{w_2} \nonumber \\
& = & \alpha\log\lp\frac{y_1}{y_2}\rp+(1-\beta)\log \lp e^{y_2^2}y_2\erfc(y_2)+e^{y_1^2}y_1\erfc(-y_1)\rp\nonumber \\
& & +\beta\log\lp e^{y_2^2}y_2\erfc(-y_2)+e^{y_1^2}y_1\erfc(y_1)\rp -y_1^2-\log(y_1)-\log(2)\nonumber \\
& = & \alpha\log\lp\frac{y_1}{y_2}\rp+(1-\beta)\log \lp \frac{1-\beta}{\alpha\lp\sqrt{\pi}e^{y_2^2}y_2\erfc(-y_2)+1\rp}\rp\nonumber \\
& & +\beta\log\lp \frac{\beta}{\alpha\lp -\sqrt{\pi}e^{y_2^2}y_2\erfc(y_2)+1\rp}\rp + y_2^2-y_1^2-\log\lp\frac{y_1}{y_2}\rp.\nonumber \\
& = & (\alpha-1)\log\lp\frac{y_1}{y_2}\rp+(1-\beta)\log \lp \frac{1-\beta}{\alpha\lp\sqrt{\pi}e^{y_2^2}y_2\erfc(-y_2)+1\rp}\rp\nonumber \\
& & +\beta\log\lp \frac{\beta}{\alpha\lp -\sqrt{\pi}e^{y_2^2}y_2\erfc(y_2)+1\rp}\rp + y_2^2-y_1^2.\nonumber \\
\label{eq:detanalIeer11mb}
\end{eqnarray}
From the discussion above we have that the choice for $\nu$, $A_0$, and $c_3$ given in (\ref{eq:detanalIeer11l}) and (\ref{eq:detanalIeer11m}) ensures that (\ref{eq:detanalIeer3a}) is satisfied. Examining further the properties of the underlying functions one can also argue that this choice is not only a stationary point, but also a global optimum in (\ref{eq:detanalIeer2}). We skip pursuing these considerations further here. Instead, we will connect what was obtained above to another set of optimizing quantities that we will obtain through a different set of considerations and present later on (in fact, quite a lot more will turn out to be true, not only will the choice for $\nu$, $A_0$, and $c_3$ given in (\ref{eq:detanalIeer11l}) and (\ref{eq:detanalIeer11m}) turn out to be precisely the one that solves the optimization in (\ref{eq:detanalIeer2}) but also precisely the one that determines $I^{(bin)}_{err}(\alpha,\beta)$). Here though, we would like to point out that in our view it is quite remarkable that given the hardness of the initial optimization problem a closed form solution of (\ref{eq:detanalIeer2}) could still be obtained. We summarize the above results in the following theorem.
\begin{theorem}
Assume the setup of Theorem \ref{thm:ldp2} and assume that a pair $(\alpha,\beta)$  is given. Let $\alpha>\alpha_w$ where $\alpha_w$ is such that $\psi^{(bin)}_{\beta}(\alpha_w)=\xi^{(bin)}_{\alpha_w}(\beta)=1$. Set
\begin{equation}\label{eq:thmldp3l1PTa}
 f_1(y_2;\alpha,\beta) \triangleq  \frac{1-\beta}{\alpha}
\lp
\frac{2y_2e^{y_2^2}}{\sqrt{\pi}y_2e^{y_2^2}\erfc(-y_2)+1}\rp-y_2e^{y_2^2}\erfc(y_2).
\end{equation}
Also let $y_2$ and $y_1$ satisfy the following \textbf{fundamental characterizations of the binary $\ell_1$'s LDP:}

\begin{eqnarray}
2\erfinv\lp
\frac{f_1(y_2;\alpha,\beta)+f_1(-y_2;\alpha,1-\beta)}{f_1(y_2;\alpha,\beta)-f_1(-y_2;\alpha,1-\beta)}\rp
\frac{e^{\erfinv\lp
\frac{f_1(y_2;\alpha,\beta)+f_1(-y_2;\alpha,1-\beta)}{f_1(y_2;\alpha,\beta)-f_1(-y_2;\alpha,1-\beta)}\rp^2}}
{f_1(y_2;\alpha,\beta)-f_1(-y_2;\alpha,1-\beta)}
& = & 1.\nonumber \\
\label{eq:thmldp3l1PT}
\end{eqnarray}
and
\begin{eqnarray}
y_1  =  \erfinv\lp
\frac{f_1(y_2;\alpha,\beta)+f_1(-y_2;\alpha,1-\beta)}{f_1(y_2;\alpha,\beta)-f_1(-y_2;\alpha,1-\beta)}\rp.\nonumber \\
\label{eq:thmldp3l1PTa}
\end{eqnarray}

\noindent Then
\begin{eqnarray}
I^{(bin)}_{err}(\alpha,\beta)\triangleq\lim_{n\rightarrow\infty}\frac{\log{P_{err}}}{n}
& \leq &
(\alpha-1)\log\lp\frac{y_1}{y_2}\rp+(1-\beta)\log \lp \frac{1-\beta}{\alpha\lp\sqrt{\pi}e^{y_2^2}y_2\erfc(-y_2)+1\rp}\rp\nonumber \\
& & +\beta\log\lp \frac{\beta}{\alpha\lp -\sqrt{\pi}e^{y_2^2}y_2\erfc(y_2)+1\rp}\rp + y_2^2-y_1^2 \triangleq I^{(bin)}_{ldp}(\alpha,\beta).
\label{eq:ldpthm3Ierrub1}
\end{eqnarray}
\noindent Moreover, for the above choice of $y_2$ and  $y_1$, $\nu$, $A_0$, $c_3$, and $\gamma$ in (\ref{eq:detanalIeer11m}) achieve an optimum in (\ref{eq:detanalIeer2}) (and ultimately in (\ref{eq:ldpthm2Ierrub1})).
\label{thm:ldp3}
\end{theorem}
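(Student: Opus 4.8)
The plan is to obtain the bound by evaluating in closed form the minimization on the right-hand side of Theorem~\ref{thm:ldp2}, i.e. $I^{(bin)}_{err}(\alpha,\beta)\le I_{err,u}^{(bin,ub)}(\alpha,\beta)=\min_{c_3\ge0,\nu\ge0,\gamma\ge c_3/2}\zeta_{\alpha,\beta}(c_3,\nu,\gamma)$. First I would change variables to $A_0=\sqrt{1-c_3/(2\gamma)}$ as in (\ref{eq:detanalIerr1}); the constraint $\gamma\ge c_3/2$ becomes $0<A_0\le1$ and $\zeta$ acquires the form (\ref{eq:detanalIeer3}), so the task reduces to the minimization (\ref{eq:detanalIeer2}) of $\zeta_{\alpha,\beta}(c_3,\nu,A_0)$ over $c_3\ge0$, $\nu\ge0$, $A_0\le1$. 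Restricting to the hard regime in which the minimum is attained in the interior, I would then solve the stationarity system (\ref{eq:detanalIeer3a}).

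Solving that $3\times3$ transcendental system is the technical core. The equation $d\zeta/dc_3=0$ decouples, because in the $A_0$ parametrization the factors $w_1,w_2$ no longer depend on $c_3$; it reduces to (\ref{eq:detanalIeer9}) and gives the clean relation $c_3=(1-A_0^2)\sqrt\alpha/A_0$ of (\ref{eq:detanalIeer12}). For the other two equations I would introduce the substitution $y_1=\nu/\sqrt2$, $y_2=\nu/(\sqrt2A_0)=y_1/A_0$ of (\ref{eq:detanalIeer4a}); the $\nu$-derivative collapses to (\ref{eq:detanalIeer4d}), and the $A_0$-derivative, after one uses (\ref{eq:detanalIeer4d}) to cancel the common awkward factors, collapses to (\ref{eq:detanalIeer11f}). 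The decisive manoeuvre is then to eliminate, in turn, each of $y_1e^{y_1^2}\erfc(y_1)$ and $y_1e^{y_1^2}\erfc(-y_1)$ between (\ref{eq:detanalIeer4d}) and (\ref{eq:detanalIeer11f}); this yields the two one-variable identities (\ref{eq:detanalIeer11h}) and (\ref{eq:detanalIeer11i}), whose ratio, together with the identity $\erfc(-y_1)=1+\erf(y_1)$, isolates $\erf(y_1)$ purely in terms of $y_2$ as in (\ref{eq:detanalIeer11k}). Substituting that back into (\ref{eq:detanalIeer11h}) produces the single transcendental equation (\ref{eq:thmldp3l1PT}) that pins down $y_2$; $y_1$ then follows from (\ref{eq:detanalIeer11k}) and $\nu,A_0,c_3,\gamma$ from (\ref{eq:detanalIeer11m}).

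Finally I would evaluate $\zeta$ at this point. Using the closed form $I_{sph}=-\tfrac12(1-A_0^2)\alpha+\alpha\log A_0$ (eq.~(\ref{eq:detanalIeer11ma}), following \cite{Stojnicl1RegPosasymldp}), the terms $-\tfrac{c_3^2}{2}$, $+\tfrac{c_3^2}{2(1-A_0^2)}$ and $-\tfrac12(1-A_0^2)\alpha$ cancel identically once (\ref{eq:detanalIeer12}) is substituted, leaving $\alpha\log A_0+(1-\beta)\log w_1+\beta\log w_2$. Rewriting $w_1,w_2$ through $y_1,y_2$ and then using the stationarity identities (\ref{eq:detanalIeer11h})--(\ref{eq:detanalIeer11i}) to replace the bracketed sums turns this into the announced $I^{(bin)}_{ldp}(\alpha,\beta)$ of (\ref{eq:ldpthm3Ierrub1}). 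Since the constructed triple $(c_3,\nu,A_0)$ is feasible, one has $\min\zeta\le\zeta$ evaluated there $=I^{(bin)}_{ldp}$, which combined with Theorem~\ref{thm:ldp2} already delivers the stated inequality $I^{(bin)}_{err}(\alpha,\beta)\le I^{(bin)}_{ldp}(\alpha,\beta)$.

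The main obstacle is the last (``moreover'') assertion, that this stationary point is the global minimizer in (\ref{eq:detanalIeer2}), hence in (\ref{eq:ldpthm2Ierrub1}). This requires (i) that (\ref{eq:thmldp3l1PT}) has a unique admissible root $y_2$ (with $y_1/y_2\in(0,1)$) whenever $\alpha>\alpha_w$, which I would obtain from a monotonicity analysis of the map sending $y_2$ to the left-hand side of (\ref{eq:thmldp3l1PT}) together with the behaviour of $f_1$ --- the step that genuinely departs from \cite{Stojnicl1RegPosasymldp}, since the two-sided (signed) structure of $w_1$ and $w_2$ alters the relevant function's monotonicity; and (ii) excluding boundary optima ($c_3\to0$, where $\zeta\equiv0$ and one must check $I^{(bin)}_{ldp}<0$ for $\alpha>\alpha_w$ --- precisely the exponential decay of $P_{err}$ above the PT curve --- together with $A_0\to1$, $A_0\to0$, $\nu\to0$, $\nu\to\infty$, which are routine). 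Packaging (i) and (ii) into a handful of lemmas about $f_1$ and the one-dimensional reduced objective $y_2\mapsto\zeta$ is where the real work lies, and it is exactly this part that the preceding analysis leaves as an exercise.
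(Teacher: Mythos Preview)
Your proposal is correct and follows essentially the same route as the paper: the change of variables to $A_0$, the decoupled $c_3$-equation (\ref{eq:detanalIeer12}), the substitution $y_1,y_2$ of (\ref{eq:detanalIeer4a}), the elimination producing (\ref{eq:detanalIeer11h})--(\ref{eq:detanalIeer11i}) and hence the single equation (\ref{eq:detanalIeer11l}) for $y_2$, and the evaluation (\ref{eq:detanalIeer11mb}) via the $I_{sph}$ identity (\ref{eq:detanalIeer11ma}) are exactly what the paper does. Regarding the ``moreover'' clause, the paper also stops at stationarity here and explicitly defers the global-optimum argument, establishing it later by matching against the independent high-dimensional geometry computation of Section~\ref{sec:hdg} rather than by the direct monotonicity/boundary analysis you outline; either route works, but the paper's is less laborious since it reuses a separate exact formula for $I^{(bin)}_{err}$.
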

\begin{proof} Follows from the above discussion.
\end{proof}
The above results for the upper tail of the binary $\ell_1$ LDP, remain correct in the lower tail regime. For the completeness we in the following section provide a short argument to confirm that this is indeed correct.

\subsubsection{Lower tail}
\label{sec:lowertail}

%

In this section we will quickly formalize the above statements about the lower tail type of large deviations. We rely on the strategy introduced in
\cite{Stojnicl1RegPosasymldp} and quickly have
\begin{equation}
P_{cor}
\leq  \min_{t_1}\min_{c_3\geq 0}
Ee^{c_3\|\g\|_2}Ee^{-c_3w(\h,S_w)}e^{-c_3t_1}/P(g\geq t_1),
\label{eq:ldpprob3lower}
\end{equation}
where obviously $P_{cor}=1-P_{err}$ is the probability that (\ref{eq:l1bin}) does produce the solution of (\ref{eq:l0}). We then have for the rate of $P_{cor}$'s decay
\begin{equation}\label{eq:ldpasymp1lower}
  I^{(bin)}_{cor}(\alpha,\beta)\triangleq\lim_{n\rightarrow\infty}\frac{\log{P_{cor}}}{n}.
\end{equation}
The following theorem is then the lower tail analogue to Theorem \ref{thm:ldp2}.
\begin{theorem}
Assume the setup of Theorem \ref{thm:ldp2}. Then
\begin{eqnarray}
I^{(bin)}_{cor}(\alpha,\beta)& \triangleq & \lim_{n\rightarrow\infty}\frac{\log{P_{cor}}}{n}\nonumber \\
& \leq  &\min_{c_3\geq 0}\left (-\frac{c_3^2}{2}+I_{sph}^++\max_{\nu\geq 0,\gamma^{(s)}\geq 0} ((1-\beta\log{w_1}+\beta\log{w_2}-c_3\gamma)\right )\triangleq I_{cor,l}^{(p,ub)}(\alpha,\beta), \nonumber \\
\label{eq:ldpthm2Icorub1}
\end{eqnarray}
where
\begin{eqnarray}
I_{sph}^+ &=& \widehat{\gamma_+}c_3-\frac{\alpha d}{2}\log\left (1-\frac{c_3}{2\widehat{\gamma_+}}\right )\nonumber \\
  \widehat{\gamma_+} &=& \frac{2c_3+\sqrt{4c_3^2+16\alpha }}{8}\nonumber \\
w_1 &=& \frac{1}{2}\lp\frac{1}{\sqrt{2\pi}}\int_{h}e^{-h^2/2}e^{-c_3\max(h-\nu,0)^2/4/\gamma}dh
  =\frac{e^{\frac{-c_3\nu^2/4/\gamma}{1+c_3/2/\gamma}}}{\sqrt{1+c_3/2/\gamma}}\erfc\left (\frac{\nu}{\sqrt{2}\sqrt{1+c_3/2/\gamma}}\right )+\erf\left (\frac{\nu}{\sqrt{2}}\right )+1\rp\nonumber \\
w_2 &=& \frac{1}{2}\lp\frac{1}{\sqrt{2\pi}}\int_{h}e^{-h^2/2}e^{-c_3\max(h+\nu,0)^2/4/\gamma}dh
  =\frac{e^{\frac{-c_3\nu^2/4/\gamma}{1+c_3/2/\gamma}}}{\sqrt{1+c_3/2/\gamma}}\erfc\left (\frac{-\nu}{\sqrt{2}\sqrt{1+c_3/2/\gamma}}\right )+\erf\left (\frac{-\nu}{\sqrt{2}}\right )+1\rp.\nonumber \\
.\label{eq:ldpthm2perrub2lower}
\end{eqnarray}\label{thm:ldp2lower}
\end{theorem}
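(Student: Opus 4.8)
The plan is to reproduce the upper-tail argument behind Theorem~\ref{thm:ldp2}, but with the complementary Gordon-type starting bound (\ref{eq:ldpprob3lower}) on $P_{cor}=1-P_{err}$ in place of (\ref{eq:ldpprob3}). First I would insert the usual large-deviations scaling $c_3\to c_3\sqrt{n}$, $\gamma\to\gamma\sqrt{n}$ (and $t_1\to t_1\sqrt{n}$), take $\frac1n\log(\cdot)$ of both sides of (\ref{eq:ldpprob3lower}), and use the independence of the Gaussian vector $\g$, the Gaussian vector $\h$, and the scalar Gaussian $g$ to split $\frac1n\log\big(Ee^{c_3\|\g\|_2}\,Ee^{-c_3w(\h,S_w)}\,e^{-c_3t_1}/P(g\ge t_1)\big)$ into three exponents that are estimated separately. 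The outer $\min_{c_3\ge0}$ is kept until the very end, and the $\min_{t_1}$ is carried out along the way.

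For the spherical factor I would use the elementary inequality $\|\g\|_2\le\frac{\|\g\|_2^2}{4\gamma_s}+\gamma_s$, valid for every $\gamma_s>0$, so that $Ee^{c_3\|\g\|_2}\le e^{c_3\gamma_s}\,E e^{c_3\|\g\|_2^2/4/\gamma_s}=e^{c_3\gamma_s}(1-c_3/2/\gamma_s)^{-m/2}$ once $\gamma_s>c_3/2$; after the scaling this contributes $c_3\gamma_s-\frac{\alpha}{2}\log(1-c_3/2/\gamma_s)$, and minimizing over $\gamma_s$ (the bound is exponentially tight, by Laplace) gives $I_{sph}^+$ with minimizer $\widehat{\gamma_+}=\frac{2c_3+\sqrt{4c_3^2+16\alpha}}{8}$. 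This is the one place where the sign of the exponent — $+c_3\|\g\|_2$ here against $-c_3\|\g\|_2$ in Theorem~\ref{thm:ldp2} — selects the positive root $\widehat{\gamma_+}$ rather than $\widehat{\gamma}$. For the scalar factor I would use a lower bound on the Gaussian tail, $P(g\ge t_1)\ge\frac{t_1}{(1+t_1^2)\sqrt{2\pi}}e^{-t_1^2/2}$ (an \emph{upper} bound on $1/P(g\ge t_1)$ is what is needed, since $P_{cor}$ is being bounded from above), so that this factor contributes $-c_3t_1+\frac{t_1^2}{2}$ up to $o(n)$; performing $\min_{t_1\ge0}$ produces exactly the $-\frac{c_3^2}{2}$ appearing in the statement, attained at $t_1=c_3$.

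The substantive part is the width factor $Ee^{-c_3w(\h,S_w)}$. Using the dual form (\ref{eq:ldpwhSw0})–(\ref{eq:ldpwhSw}), $w(\h,S_w)=\min_{\nu\ge0,\gamma\ge0}\big[\frac{\sum_{i=1}^{n-k}\max(\h_i-\nu,0)^2+\sum_{i=n-k+1}^{n}\max(\h_i+\nu,0)^2}{4\gamma}+\gamma\big]$, one has $e^{-c_3w(\h,S_w)}=\max_{\nu\ge0,\gamma\ge0}e^{-c_3\gamma}e^{-c_3(\sum\max(\h_i-\nu,0)^2+\sum\max(\h_i+\nu,0)^2)/4/\gamma}$. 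I would discretize $(\nu,\gamma)$ over a grid, confined to an a priori compact range by crude bounds on $w(\h,S_w)$, and apply a union bound so that $\max$ and $E_\h$ can be interchanged at the cost of only an $e^{o(n)}$ factor; then $E_\h$ of the remaining product factorizes coordinatewise into $w_1^{n-k}w_2^{k}$. Because the exponent now carries a minus sign, each one-dimensional Gaussian integral converges for \emph{every} $\gamma>0$ (no constraint $\gamma\ge c_3/2$ is needed, unlike the upper tail) and evaluates to the closed forms in (\ref{eq:ldpthm2perrub2lower}), with $\sqrt{1+c_3/2/\gamma}$ in the denominators. This yields the inner $\max_{\nu\ge0,\gamma\ge0}\big((1-\beta)\log w_1+\beta\log w_2-c_3\gamma\big)$.

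Assembling the three exponents and reinstating $\min_{c_3\ge0}$ gives exactly $I^{(bin)}_{cor}(\alpha,\beta)\le\min_{c_3\ge0}\big(-\frac{c_3^2}{2}+I_{sph}^++\max_{\nu\ge0,\gamma\ge0}((1-\beta)\log w_1+\beta\log w_2-c_3\gamma)\big)=I_{cor,l}^{(p,ub)}(\alpha,\beta)$. I expect the main obstacle to be the interchange of $E_\h$ with the continuum $\max_{\nu,\gamma}$ in the width estimate: one must a priori rule out the degenerate escapes $\gamma\downarrow0$, $\gamma\uparrow\infty$, $\nu\uparrow\infty$, verify the equicontinuity needed for the net argument, and check that the grid has sub-exponential cardinality. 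Everything else is a transcription of the Theorem~\ref{thm:ldp2} computation together with the bookkeeping of sign flips (upper bounds on all three factors, hence a lower bound on the Gaussian tail, and the positive root $\widehat{\gamma_+}$).
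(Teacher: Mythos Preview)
Your proposal is correct and follows essentially the same route as the paper, which simply cites \cite{Stojnicl1RegPosasymldp} and says the lower-tail argument is identical to the upper-tail one with the complementary Gordon bound (\ref{eq:ldpprob3lower}) and the attendant sign flips. You have correctly identified the one nontrivial point that distinguishes the lower tail---namely that here one must upper-bound $E_\h[\max_{\nu,\gamma}(\cdot)]$ rather than $E_\h[\min_{\nu,\gamma}(\cdot)]$, so the interchange of expectation and optimum is no longer free and requires a net/union-bound argument over a compact $(\nu,\gamma)$ range---and your treatment of the spherical factor (positive root $\widehat{\gamma_+}$), the scalar factor ($t_1=c_3$ giving $-c_3^2/2$), and the factorization of the width factor into $w_1^{n-k}w_2^{k}$ is exactly right.
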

\begin{proof} Follows in exactly the same way as the result for the lower tail of the standard $\ell_1$ LDP in \cite{Stojnicl1RegPosasymldp}.
\end{proof}
Instead of repeating the procedure from the previous section to solve the above optimization problem, one can just quickly observe that
the change $c_3\rightarrow -c_3$ gives as in Section \ref{sec:analysisIerr}
\begin{equation}\label{eq:detanalIcor1}
  A_{0}\triangleq\sqrt{1-\frac{c_3}{2\gamma}},
\end{equation}
and
\begin{equation}\label{eq:detanalIcor2}
I_{cor,l}^{(p,ub)}(\alpha,\beta)\triangleq \min_{c_3\leq 0}\max_{\nu\geq 0,A_0\leq 1}\zeta_{\alpha,\beta}(c_3,\nu,A_0)
\end{equation}
where
\begin{eqnarray}
\zeta_{\alpha,\beta}(c_3,\nu,A_0)&=&\left (-\frac{c_3^2}{2}+I_{sph}^++(1-\beta)\log{w_1}+\beta\log{w_2}+\frac{c_3^2}{2(1-A_0^2)}\right )\nonumber \\
I_{sph}^+ &=& -\widehat{\gamma^+}c_3-\frac{\alpha }{2}\log\left (1+\frac{c_3}{2\widehat{\gamma^+}}\right )\nonumber \\
  \widehat{\gamma^+} &=& \frac{-c_3+\sqrt{c_3^2+4\alpha}}{4}=-\widehat{\gamma},\nonumber \\
\label{eq:detanalIcor3}
\end{eqnarray}
and $w_1$ and $w_2$ are as in (\ref{eq:detanalIeer3}). Also, $\zeta_{\alpha,\beta}(c_3,\nu,A_0)$ defined in (\ref{eq:detanalIcor3}) is exactly the same as the corresponding one in (\ref{eq:detanalIeer3}) which means that one can proceed with the computation of all the derivatives as earlier and the values we have chosen for $c_3$, $\nu$, $\gamma$, and $A_0$ in the upper tail regime will have the same form. The following theorem summarizes the final results (this is of course nothing but a lower tail analogue to Theorem \ref{thm:ldp3}).
\begin{theorem}
Assume the setup of Theorem \ref{thm:ldp3} and assume that a pair $(\alpha,\beta)$  is given. Differently from Theorem \ref{thm:ldp3}, let $\alpha<\alpha_w$ where $\alpha_w$ is such that $\psi^{(bin)}_{\beta}(\alpha_w)=\xi^{(bin)}_{\alpha_w}(\beta)=1$. Also let $y_2$ and $y_1$ satisfy the \textbf{fundamental \emph{binary} $\ell_1$'s LDP characterizations} as in Theorem \ref{thm:ldp3}. Then choosing $\nu$, $c_3$, and $\gamma$ in the optimization problem in (\ref{eq:ldpthm2Icorub1}) as $\nu$, $-c_3$, and $\gamma$  from Theorem \ref{thm:ldp3} (or equivalently, choosing $\nu$, $c_3$, and $A_0$ in the optimization problem in (\ref{eq:detanalIcor2}) as $\nu$, $c_3$, and $A_0$  from Theorem \ref{thm:ldp3}) gives needed
$\zeta_{\alpha,\beta}(c_3,\nu,A_0)$.
\label{thm:ldp3lower}
\end{theorem}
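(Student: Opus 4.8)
The plan is to reduce Theorem~\ref{thm:ldp3lower} to the already-established upper-tail analysis of Section~\ref{sec:analysisIerr}, exploiting the formal symmetry under $c_3\to -c_3$ pointed out just before the statement. First I would start from the bound of Theorem~\ref{thm:ldp2lower}, i.e. $I^{(bin)}_{cor}(\alpha,\beta)\leq\min_{c_3\geq0}(-c_3^2/2+I_{sph}^++\max_{\nu\geq0,\gamma\geq0}((1-\beta)\log w_1+\beta\log w_2-c_3\gamma))$, and perform the substitution $c_3\to -c_3$. Setting $A_0\triangleq\sqrt{1-c_3/(2\gamma)}$ as in (\ref{eq:detanalIcor1}), I would check term by term that the objective collapses to exactly the function $\zeta_{\alpha,\beta}(c_3,\nu,A_0)$ of (\ref{eq:detanalIeer3}): the exponential factors $e^{-c_3\max(h\mp\nu,0)^2/4/\gamma}$ and the prefactors $\sqrt{1+c_3/2/\gamma}$ in $w_1,w_2$ turn into the corresponding upper-tail expressions, while the spherical term obeys $\widehat{\gamma^+}=-\widehat{\gamma}$, so that $I_{sph}^+$ in (\ref{eq:detanalIcor3}) coincides with the $I_{sph}$ of (\ref{eq:detanalIeer3}). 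The only bookkeeping is that the outer $\min_{c_3\geq0}$ becomes $\min_{c_3\leq0}$ and the inner optimization over $(\nu,A_0)$ becomes a $\max$, as recorded in (\ref{eq:detanalIcor2}).

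Once this identification is in place, the entire derivative computation of Section~\ref{sec:analysisIerr} — equations (\ref{eq:detanalIeer9}) through (\ref{eq:detanalIeer11m}) — applies verbatim, since it depends only on the functional form of $\zeta_{\alpha,\beta}$ and not on the sign restriction on $c_3$ nor on whether $(\nu,A_0)$ are minimized or maximized. In particular the stationarity system (\ref{eq:detanalIeer3a}) is unchanged, so its solution is again characterized by the fundamental LDP relation (\ref{eq:thmldp3l1PT}) together with (\ref{eq:thmldp3l1PTa}), with $\nu$, $A_0$, $c_3$, $\gamma$ recovered through (\ref{eq:detanalIeer11m}). Substituting these back into $\zeta_{\alpha,\beta}$ and using the closed form (\ref{eq:detanalIeer11ma}) for $I_{sph}$ reproduces exactly the value $I^{(bin)}_{ldp}(\alpha,\beta)$ of (\ref{eq:ldpthm3Ierrub1}). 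Hence the same $(\nu,c_3,\gamma)$ — with $c_3$ replaced by $-c_3$ in the original parametrization of (\ref{eq:ldpthm2Icorub1}) — furnishes the claimed extremizer, which is precisely the content of the theorem.

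The step I expect to require the most care is confirming that, for $\alpha<\alpha_w$, the interior stationary point is genuinely the relevant extremum of the saddle-type problem (\ref{eq:detanalIcor2}) — a $\min$ over $c_3\leq0$ and a $\max$ over $\nu\geq0$, $A_0\leq1$ — rather than a boundary point, and that the recovered quantities are feasible (in particular $\gamma\geq0$, $A_0\leq1$, $c_3\leq0$, with $A_0=y_1/y_2$ forcing the correct ordering of $y_1$ and $y_2$). This is the lower-tail counterpart of the \emph{hard regime} caveat made after (\ref{eq:detanalIeer3a}); as there, I would dispatch the boundary subcases by a short separate check and verify the second-order/monotonicity conditions that single out the stationary point, relying on the same sign and convexity properties of the $\erfc$-type building blocks used in the upper-tail argument and in \cite{Stojnicl1RegPosasymldp}. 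Everything else is a transcription of the upper-tail proof.
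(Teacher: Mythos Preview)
Your proposal is correct and follows essentially the same route as the paper: perform the substitution $c_3\to -c_3$, observe that $\widehat{\gamma^+}=-\widehat{\gamma}$ so that $I_{sph}^+$ and the $w_i$ collapse to the upper-tail $\zeta_{\alpha,\beta}$ of (\ref{eq:detanalIeer3}), and then reuse the derivative computations (\ref{eq:detanalIeer9})--(\ref{eq:detanalIeer11m}) verbatim. One small slip: in the lower-tail regime the paper notes after the theorem that $y_1>y_2$, hence $A_0=y_1/y_2>1$ and $c_3<0$, so your feasibility check should read $A_0\geq 1$ rather than $A_0\leq 1$ (the constraint ``$A_0\leq 1$'' in (\ref{eq:detanalIcor2}) appears to be a leftover from the upper-tail setup).
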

\begin{proof} Follows from the considerations leading to Theorem \ref{thm:ldp3}.
\end{proof}
Clearly, in terms of the translation of the upper tail results to the lower tail, there is really not much difference compared to the standard $\ell_1$ and its considerations from \cite{Stojnicl1RegPosasymldp}. Along the same lines, here we will have for $\alpha<\alpha_w$, $y_1>y_2$ which means $A_0>1$ and finally $c_3<0$. In the upper tail regime (i.e. in Theorem \ref{thm:ldp3}) the reasoning is reversed and $c_3>0$.

\subsection{High-dimensional geometry}
\label{sec:hdg}

The previous section introduced a purely probabilistic approach to deal with the LDPs. In this section though, we take a different path and analyze the LDPs via a high-dimensional integral geometry approach. As in the previous section, we will here again mostly focus on the upper tail regime (the results for the lower tail will automatically follow). In mathematical terms, we again assume that we are given a pair $(\alpha,\beta)$ and that $\alpha>\alpha_w$ (where $\alpha_w$ is such that $\psi^{(bin)}_{\beta}(\alpha_w)=\xi^{(bin)}_{\alpha_w}(\beta)=1$). We will rely on the following observations from \cite{Stojnicl1BnBxfinn}
\begin{equation}\label{eq:hdg1}
  \Psi^{(bin)}_{net}(\alpha,\beta)=I^{(bin)}_{err}(\alpha,\beta)\triangleq\lim_{n\rightarrow\infty}\frac{\log{P_{err}}}{n}
  =\max_{\gamma_g\in(0,\min(1-\alpha,\beta))}
  \lp \psicom+\psiint+\psiext\rp,
\end{equation}
where
\begin{eqnarray}
H(x) & = & x\log(x)+(1-x)\log(1-x)\nonumber \\
\psicom & = & -(1-\beta)H\lp \frac{1-\alpha-\gamma_g}{1-\beta} \rp-\beta H\lp \frac{\beta-\gamma_g}{\beta} \rp\nonumber \\
\psiint & = & \min_{y_i\geq 0} (\alpha y_i^2 +(\alpha-\beta+\gamma_g)\log(\erfc(y_i))+(\beta-\gamma_g)\log(\erfc(-y_i)))- \alpha \log(2)\nonumber\\
\psiext & = & \max_{y_e\geq 0} (-\alpha y_e^2 +(1-\alpha-\gamma_g)\log(\erfc(-y_e))+\gamma_g\log(\erfc(y_e)))-(1-\alpha)\log(2). \label{eq:hdg2}
\end{eqnarray}
One can of course solve the above problem numerically. Here, we will raise the bar a bit higher and look for an explicit characterization of the solution. To that end we start with the following transformation of the above optimization problem
\begin{equation}\label{eq:hdg1a}
\max_{\gamma_g\in(0,\min(1-\alpha,\beta)),y_e\geq 0}\min_{y_i\geq 0} \zeta_{\alpha,\beta}(\gamma_g,y_e,y_i)
\end{equation}
where
\begin{eqnarray}\label{eq:hdg1b}
\zeta_{\alpha,\beta}(\gamma_g,y_e,y_i) & = & -(1-\beta)H\lp \frac{1-\alpha-\gamma_g}{1-\beta} \rp-\beta H\lp \frac{\beta-\gamma_g}{\beta} \rp\nonumber \\
&& +
(\alpha y_i^2 +(\alpha-\beta+\gamma_g)\log(\erfc(y_i))+(\beta-\gamma_g)\log(\erfc(-y_i)))- \alpha \log(2) \nonumber \\
& & +
(-\alpha y_e^2 +(1-\alpha-\gamma_g)\log(\erfc(-y_e))+\gamma_g\log(\erfc(y_e)))-(1-\alpha)\log(2).
\end{eqnarray}
Before proceeding further, we will below establish a few nice properties of $\zeta_{\alpha,\beta}(\gamma_g,y_e,y_i)$. Namely, that for any fixed $(\alpha,\beta)\in (0,1)\times(0,\alpha)$, we will show that $\zeta_{\alpha,\beta}(\gamma_g,y_e,y_i)$ is concave in $\gamma_g$ and $y_e$ and convex in $y_i$ in the optimizing domain of interest.

\subsubsection{Concavity in $\gamma_g$}
\label{sec:hdgconcgammag}

We start by computing the first derivative with respect to $\gamma_g$
\begin{eqnarray}\label{eq:hdg1o}
\frac{d\zeta_{\alpha,\beta}(\gamma_g,y_e,y_i)}{d\gamma_g}
& = &-(1-\beta)\frac{dH\lp \frac{1-\alpha-\gamma_g}{1-\beta} \rp}{d\gamma_g}-\beta\frac{d H\lp \frac{\beta-\gamma_g}{\beta} \rp}{d\gamma_g}+\log\lp\frac{\erfc(y_i)\erfc(y_e)}{\erfc(-y_i)\erfc(-y_e)}\rp\nonumber \\
& = &\log\lp \frac{\frac{1-\alpha-\gamma_g}{1-\beta}}{1-\frac{1-\alpha-\gamma_g}{1-\beta}} \rp+\log \lp \frac{\frac{\beta-\gamma_g}{\beta}}
{1-\frac{\beta-\gamma_g}{\beta}} \rp+\log\lp\frac{\erfc(y_i)\erfc(y_e)}{\erfc(-y_i)\erfc(-y_e)}\rp\nonumber \\
& = &\log\lp \frac{1-\alpha-\gamma_g}{\alpha-\beta+\gamma_g} \rp+\log \lp \frac{\beta-\gamma_g}
{\gamma_g} \rp+\log\lp\frac{\erfc(y_i)\erfc(y_e)}{\erfc(-y_i)\erfc(-y_e)}\rp.
\end{eqnarray}
We then also have for the second derivative
\begin{eqnarray}\label{eq:hdg1oa}
\frac{d^2\zeta_{\alpha,\beta}(\gamma_g,y_e,y_i)}{d\gamma_g^2}
& = & \frac{d\lp\log\lp \frac{1-\alpha-\gamma_g}{\alpha-\beta+\gamma_g} \rp+\log \lp \frac{\beta-\gamma_g}
{\gamma_g} \rp+\log\lp\frac{\erfc(y_i)\erfc(y_e)}{\erfc(-y_i)\erfc(-y_e)}\rp\rp}{d\gamma_g}\nonumber \\
& = & \frac{-\frac{1-\alpha-\gamma_g}{(\alpha-\beta+\gamma_g)^2}-\frac{1}{\alpha-\beta+\gamma_g}}
{\frac{1-\alpha-\gamma_g}{\alpha-\beta+\gamma_g}}
+\frac{-\frac{\beta-\gamma_g}
{\gamma_g^2}-\frac{1}
{\gamma_g}}{\frac{\beta-\gamma_g}
{\gamma_g}}\nonumber \\
& = & \frac{-(1-\beta)}
{(1-\alpha-\gamma_g)(\alpha-\beta+\gamma_g)}
+\frac{-\beta}{\gamma_g(\beta-\gamma_g)}
\nonumber \\
& < & 0.
\end{eqnarray}

\subsubsection{Convexity in $y_i$}
\label{sec:hdgconvyi}

To check convexity in $y_i$ and concavity in $y_e$ we will rely on the following
\begin{eqnarray}\label{eq:hdg1c}
\frac{d\log(\erfc(x))}{dx}=-\frac{2e^{-x^2}}{\sqrt{\pi}\erfc(x)} \quad \mbox{and} \quad \frac{d\log(\erfc(-x))}{dx}=\frac{2e^{-x^2}}{\sqrt{\pi}\erfc(-x)},
\end{eqnarray}
and
\begin{eqnarray}\label{eq:hdg1d}
\frac{d^2\log(\erfc(x))}{dx^2}=\frac{4e^{-x^2}\sqrt{\pi}x\erfc(x)-4e^{-2x^2}}{\pi\erfc(x)^2} \quad \mbox{and} \quad \frac{d^2\log(\erfc(-x))}{dx^2}=\frac{-4e^{-x^2}\sqrt{\pi}x\erfc(-x)-4e^{-2x^2}}{\pi\erfc(-x)^2}.
\end{eqnarray}
As we will eventually need both, the first and the second derivative with respect to $y_i$, we find it convenient to compute them at the same time right here. For the first derivative we have
\begin{eqnarray}\label{eq:hdg1e}
\frac{d\zeta_{\alpha,\beta}(\gamma_g,y_e,y_i)}{dy_i}
& = &\frac{d(\alpha y_i^2 +(\alpha-\beta+\gamma_g)\log(\erfc(y_i))+(\beta-\gamma_g)\log(\erfc(-y_i)))}{dy_i}\nonumber \\
& = & 2\alpha y_i-\frac{2(\alpha-\beta+\gamma_g)e^{-y_i^2}}{\sqrt{\pi}\erfc(y_i)}+\frac{2(\beta-\gamma_g)e^{-y_i^2}}{\sqrt{\pi}\erfc(-y_i)},
\end{eqnarray}
and for the second
\begin{eqnarray}\label{eq:hdg1f}
\frac{d^2\zeta_{\alpha,\beta}(\gamma_g,y_e,y_i)}{dy_i^2}
& = &\frac{d^2(\alpha y_i^2 +(\alpha-\beta+\gamma_g)\log(\erfc(y_i))+(\beta-\gamma_g)\log(\erfc(-y_i)))}{dy_i^2}\nonumber \\
& = & 2\alpha+ (\alpha-\beta+\gamma_g)\frac{4e^{-y_i^2}\sqrt{\pi}y_i\erfc(y_i)-4e^{-2y_i^2}}{\pi\erfc(y_i)^2}\nonumber \\
& & + (\beta-\gamma_g)\frac{-4e^{-y_i^2}\sqrt{\pi}y_i\erfc(-y_i)-4e^{-2y_i^2}}{\pi\erfc(-y_i)^2}\nonumber \\
& = & 2(\alpha-\beta+\gamma_g)\frac{\pi\erfc(y_i)^2+2e^{-y_i^2}\sqrt{\pi}y_i\erfc(y_i)-2e^{-2y_i^2}}{\pi\erfc(y_i)^2}\nonumber \\
& & + 2(\beta-\gamma_g)\frac{\pi\erfc(-y_i)^2-2e^{-y_i^2}\sqrt{\pi}y_i\erfc(-y_i)-2e^{-2y_i^2}}{\pi\erfc(-y_i)^2}.\nonumber \\
\end{eqnarray}
In \cite{Stojnicl1RegPosasymldp} it was argued that $\lp\pi\erfc(y_i)^2+2e^{-y_i^2}\sqrt{\pi}y_i\erfc(y_i)-2e^{-2y_i^2}\rp>0$. In order to show that the above derivative is nonnegative it is then enough to show that $\lp\pi\erfc(-y_i)^2-2e^{-y_i^2}\sqrt{\pi}y_i\erfc(-y_i)-2e^{-2y_i^2}\rp$ is an increasing function of $y_i$ (this is technically needed only on $y_i\geq 0$). This will be automatically implied if $\lp\sqrt{\pi}\erfc(-y_i)-2e^{-y_i^2}y_i\rp$ is increasing in $y_i$. The following simple argument shows that this is indeed the case.
\begin{eqnarray}\label{eq:hdg1g}
\frac{d\lp\sqrt{\pi}\erfc(-y_i)-2e^{-y_i^2}y_i\rp}{dy_i}=2e^{-y_i^2}-2e^{-y_i^2}+4y_i^2\geq 0.
\end{eqnarray}
All of the above then implies that $\zeta_{\alpha,\beta}(\gamma_g,y_e,y_i)$ is indeed convex in $y_i$ in the domain of interest (of course, a bit more is true based on the above but we stop short of discussing it as it goes beyond what is needed here).

\subsubsection{Concavity in $y_e$}
\label{sec:hdgconcye}

To check concavity in $y_e$ we proceed as above. For the first derivative we have
\begin{eqnarray}\label{eq:hdg1h}
\frac{d\zeta_{\alpha,\beta}(\gamma_g,y_e,y_i)}{dy_e}
& = &\frac{d(-\alpha y_e^2 +(1-\alpha-\gamma_g)\log(\erfc(-y_e))+\gamma_g\log(\erfc(y_e)))}{dy_e}\nonumber \\
& = & -2\alpha y_e+\frac{2(1-\alpha-\gamma_g)e^{-y_e^2}}{\sqrt{\pi}\erfc(-y_e)}-\frac{2\gamma_ge^{-y_e^2}}{\sqrt{\pi}\erfc(y_e)},
\end{eqnarray}
and for the second
\begin{eqnarray}\label{eq:hdg1i}
\frac{d^2\zeta_{\alpha,\beta}(\gamma_g,y_e,y_i)}{dy_e^2}
& = &\frac{d^2(-\alpha y_e^2 +(1-\alpha-\gamma_g)\log(\erfc(-y_e))+\gamma_g\log(\erfc(y_e)))}{dy_e^2}\nonumber \\
& = & -2\alpha+ (1-\alpha-\gamma_g)\frac{-4e^{-y_e^2}\sqrt{\pi}y_e\erfc(-y_e)-4e^{-2y_e^2}}{\pi\erfc(-y_e)^2}\nonumber \\
& & +\gamma_g\frac{4e^{-y_e^2}\sqrt{\pi}y_e\erfc(y_e)-4e^{-2y_e^2}}{\pi\erfc(y_e)^2}\nonumber \\
\end{eqnarray}
If one can show that what multiplies $\gamma_g$ in the above expression is not positive then the second derivative would also not be positive. To that end we have
\begin{eqnarray}\label{eq:hdg1ja}
-\gamma_g\frac{-e^{y_e^2}\sqrt{\pi}y_e\erfc(-y_e)-1}{\pi\erfc(-y_e)^2}
+\gamma_g\frac{e^{y_e^2}\sqrt{\pi}y_e\erfc(y_e)-1}{\pi\erfc(y_e)^2}
=\gamma_g\frac{(2\sqrt{\pi} y_e\erfc(-y_e)\erfc(y_e) e^{y_e^2}-4\erf(y_e))
}{\pi\erfc(y_e)^2\erfc(-y_e)^2}, \nonumber \\
\end{eqnarray}
and if we show that $(2\sqrt{\pi} y_e\erfc(-y_e)\erfc(y_e) e^{y_e^2}-4\erf(y_e))\leq 0$ when $y_e\geq 0$ then the second derivative in (\ref{eq:hdg1i}) would not be positive. There are many ways how this can be shown. Here we rely on the following well known inequalities
\begin{equation}
\frac{2}{\sqrt{\pi}}\frac{e^{-y^2}}{y+\sqrt{y^2+2}}< \erfc(y)\leq \frac{2}{\sqrt{\pi}}\frac{e^{-y^2}}{y+\sqrt{y^2+\frac{4}{\pi}}}.
 \label{eq:hdg10a}
\end{equation}
Using (\ref{eq:hdg10a}) we have
\begin{eqnarray}\label{eq:hdg1j}
2\sqrt{\pi} y_e\erfc(-y_e)\erfc(y_e) e^{y_e^2}-4\erf(y_e)\leq
\frac{4y_e\erfc(-y_e)}{y_e+\sqrt{y_e^2+\frac{4}{\pi}}}-4\erf(y_e)
=\lp\frac{4\erfc(-y_e)}{1+\sqrt{1+\frac{4}{\pi y_e^2}}}-\frac{4\erfc(-y_e)}{\frac{1}{\erf(y_e)}+1}\rp. \nonumber \\
\end{eqnarray}
From (\ref{eq:hdg1j}), we have that if $\erf(y_e)\geq \frac{y_e}{\sqrt{y_e^2+\frac{4}{\pi}}}$ then $(2\sqrt{\pi} y_e\erfc(-y_e)\erfc(y_e) e^{y_e^2}-4\erf(y_e))\leq 0$. Based on (\ref{eq:hdg10a}) we have that
\begin{eqnarray}\label{eq:hdg1k}
\frac{y_e}{\sqrt{y_e^2+\frac{4}{\pi}}}\leq 1-\frac{2}{\sqrt{\pi}} \frac{e^{-y_e^2}}{y_e+\sqrt{y_e^2+\frac{4}{\pi}}} \nonumber \\
\end{eqnarray}
implies
\begin{eqnarray}\label{eq:hdg1l}
\erf(y_e)\geq \frac{y_e}{\sqrt{y_e^2+\frac{4}{\pi}}}. \nonumber \\
\end{eqnarray}
Transforming (\ref{eq:hdg1k}) further we have
\begin{eqnarray}\label{eq:hdg1m}
& & \frac{y_e}{\sqrt{y_e^2+\frac{4}{\pi}}}\leq 1-\frac{2}{\sqrt{\pi}} \frac{e^{-y_e^2}}{y_e+\sqrt{y_e^2+\frac{4}{\pi}}} \nonumber \\
& \Longleftrightarrow & e^{-y_e^2}
\leq \frac{2}{\sqrt{y_e^2\pi+4}}\nonumber \\
& \Longleftrightarrow &
y_e^2\frac{\pi}{4}+1\leq e^{2y_e^2}.\nonumber \\
\end{eqnarray}
The last inequality holds and one then also has based on the above that (\ref{eq:hdg1l}) holds which based on (\ref{eq:hdg1j}) implies that the right side of the equality in (\ref{eq:hdg1ja}) is not positive. This is then enough to conclude that the second derivative in (\ref{eq:hdg1i}) is not positive and $\zeta_{\alpha,\beta}(\gamma_g,y_e,y_i)$ is indeed concave in $y_e$ on $y_e\geq 0$.

\subsubsection{Solving the derivative equations}
\label{sec:hdgdereqns}

While the above properties of $\zeta_{\alpha,\beta}(\gamma_g,y_e,y_i)$ are nice and welcome one still needs to solve the following system of equations
\begin{equation}\label{eq:hdg1n}
  \frac{d\zeta_{\alpha,\beta}(\gamma_g,y_e,y_i)}{d\gamma_g}= \frac{d\zeta_{\alpha,\beta}(\gamma_g,y_e,y_i)}{dy_e}
=  \frac{d\zeta_{\alpha,\beta}(\gamma_g,y_e,y_i)}{dy_i}=0.
\end{equation}
The derivatives with respect to $\gamma_g$, $y_i$, ad $y_e$ are computed in (\ref{eq:hdg1o}), (\ref{eq:hdg1e}), and (\ref{eq:hdg1h}), respectively.
Setting the derivative in (\ref{eq:hdg1e}) to zero gives
\begin{eqnarray}\label{eq:hdg1p}
& & \gamma_g\lp\frac{1}{\erfc(y_i)}+ \frac{1}{\erfc(-y_i)}\rp=
\sqrt{\pi}\alpha e^{y_i^2} y_i-\frac{(\alpha-\beta)}{\erfc(y_i)}+\frac{(\beta)}{\erfc(-y_i)}\nonumber \\
&\Longleftrightarrow & \gamma_g=
\frac{\alpha}{2}\lp\sqrt{\pi} e^{y_i^2} y_i\erfc(y_i)\erfc(-y_i)-\erfc(-y_i)\rp+\beta.
\end{eqnarray}
After setting
\begin{equation}\label{eq:hdg1q}
  A_{bin}=\frac{\alpha-\beta+\gamma_g}{1-\alpha-\gamma_g}\frac{\gamma_g}{\beta-\gamma_g},
\end{equation}
from (\ref{eq:hdg1o}) one has
\begin{eqnarray}\label{eq:hdg1r}
& &   A_{bin}=\frac{\erfc(y_i)\erfc(y_e)}{\erfc(-y_i)\erfc(-y_e)}\nonumber \\
& \Longleftrightarrow &  \frac{\erfc(-y_e)}{\erfc(y_e)}=\frac{\erfc(y_i)}{\erfc(-y_i)A_{bin} }\nonumber \\
& \Longleftrightarrow &  y_e=\erfinv\lp\frac{\erfc(y_i)-A_{bin}\erfc(-y_i)}{\erfc(y_i)+A_{bin}\erfc(-y_i)}\rp.
\end{eqnarray}
Plugging $\gamma_g$ from (\ref{eq:hdg1p}) and $y_e$ from (\ref{eq:hdg1r}) into the right side of (\ref{eq:hdg1h}) and equaling it to zero one finally obtains a single equation with $y_i$ as the only unknown. After finding $y_i$ in this way one can reuse it to obtain $\gamma_g$ through (\ref{eq:hdg1p}) and $y_e$ through (\ref{eq:hdg1r}). Instead of doing this we will present a different path that will be more connected to what was presented in Section \ref{sec:ldp}. We start by setting the derivative in (\ref{eq:hdg1h}) to zero to obtain
\begin{eqnarray}\label{eq:hdg1s}
& & \gamma_g\lp\frac{1}{\erfc(y_e)}+ \frac{1}{\erfc(-y_e)}\rp=
-\sqrt{\pi}\alpha e^{y_e^2} y_e+\frac{1-\alpha}{\erfc(-y_e)}\nonumber \\
&\Longleftrightarrow & \gamma_g=
\frac{\alpha}{2}\lp -\sqrt{\pi} e^{y_e^2} y_e\erfc(-y_e)-1\rp\erfc(y_e)+\frac{\erfc(y_e)}{2}.
\end{eqnarray}
Plugging $\gamma_g$ from (\ref{eq:hdg1p}) into (\ref{eq:hdg1q}) gives
\begin{equation}\label{eq:hdg1t}
  A_{bin}=\frac{\alpha+\frac{\alpha}{2}\lp\sqrt{\pi} e^{y_i^2} y_i\erfc(y_i)\erfc(-y_i)-\erfc(-y_i)\rp}{1-\alpha-\lp\frac{\alpha}{2}\lp\sqrt{\pi} e^{y_i^2} y_i\erfc(y_i)\erfc(-y_i)-\erfc(-y_i)\rp+\beta\rp}\frac{\frac{\alpha}{2}\lp\sqrt{\pi} e^{y_i^2} y_i\erfc(y_i)\erfc(-y_i)-\erfc(-y_i)\rp+\beta}
  {-\lp\frac{\alpha}{2}\lp\sqrt{\pi} e^{y_i^2} y_i\erfc(y_i)\erfc(-y_i)-\erfc(-y_i)\rp\rp}.
\end{equation}
After a few additional transformations we have
\begin{equation}\label{eq:hdg1u}
  A_{bin}=\frac{2\beta+\alpha\lp\sqrt{\pi} e^{y_i^2} y_i\erfc(y_i)-1\rp\erfc(-y_i)}{2(1-\beta)-\alpha\lp\sqrt{\pi} e^{y_i^2} y_i\erfc(-y_i)+1\rp\erfc(y_i)}\frac{\lp\sqrt{\pi} e^{y_i^2} y_i\erfc(-y_i)+1\rp}
  {\lp-\sqrt{\pi} e^{y_i^2} y_i\erfc(y_i)+1\rp}\frac{\erfc(y_i)}{\erfc(-y_i)}.
\end{equation}
A combination of (\ref{eq:hdg1r}) and (\ref{eq:hdg1u}) gives
\begin{eqnarray}\label{eq:hdg1v}
  \frac{\erfc(-y_e)}{\erfc(y_e)}=\frac{\erfc(y_i)}{\erfc(-y_i)A_{bin} }
  =\frac{2(1-\beta)-\alpha\lp\sqrt{\pi} e^{y_i^2} y_i\erfc(-y_i)+1\rp\erfc(y_i)}{2\beta+\alpha\lp\sqrt{\pi} e^{y_i^2} y_i\erfc(y_i)-1\rp\erfc(-y_i)}\frac{\lp-\sqrt{\pi} e^{y_i^2} y_i\erfc(y_i)+1\rp}{\lp\sqrt{\pi} e^{y_i^2} y_i\erfc(-y_i)+1\rp}.
\end{eqnarray}
Now, from (\ref{eq:detanalIeer11h}) and (\ref{eq:detanalIeer11i}) we have
\begin{eqnarray}\label{eq:hdg1w}
\frac{\frac{1-\beta}{\alpha}
\lp
\frac{2y_2e^{y_2^2}}{\sqrt{\pi}y_2e^{y_2^2}\erfc(-y_2)+1}\rp-y_2e^{y_2^2}\erfc(y_2)}
{\frac{\beta}{\alpha}
\lp
\frac{2y_2e^{y_2^2}}{-\sqrt{\pi}y_2e^{y_2^2}\erfc(y_2)+1}\rp-y_2e^{y_2^2}\erfc(-y_2)}
=\frac{f_1(y_2;\alpha,\beta)}{-f_1(-y_2;\alpha,1-\beta)},
\end{eqnarray}
and
\begin{eqnarray}\label{eq:hdg1x}
\frac{
2(1-\beta)-\alpha\lp\sqrt{\pi}y_2e^{y_2^2}\erfc(-y_2)+1\rp\erfc(y_2)}
{
2\beta-\alpha\lp-\sqrt{\pi}y_2e^{y_2^2}\erfc(y_2)+1\rp\erfc(-y_2)}\frac{\lp-\sqrt{\pi}y_2e^{y_2^2}\erfc(y_2)+1\rp}{\lp\sqrt{\pi}y_2e^{y_2^2}\erfc(-y_2)+1\rp}
=\frac{f_1(y_2;\alpha,\beta)}{-f_1(-y_2;\alpha,1-\beta)}.
\end{eqnarray}
Connecting (\ref{eq:hdg1v}) and (\ref{eq:hdg1x}) we also have
\begin{equation}\label{eq:hdg1y}
  \frac{\erfc(-y_e)}{\erfc(y_e)}
  =\frac{2(1-\beta)-\alpha\lp\sqrt{\pi} e^{y_i^2} y_i\erfc(-y_i)+1\rp\erfc(y_i)}{2\beta+\alpha\lp\sqrt{\pi} e^{y_i^2} y_i\erfc(y_i)-1\rp\erfc(-y_i)}\frac{\lp-\sqrt{\pi} e^{y_i^2} y_i\erfc(y_i)+1\rp}{\lp\sqrt{\pi} e^{y_i^2} y_i\erfc(-y_i)+1\rp}
  =\frac{f_1(y_i;\alpha,\beta)}{-f_1(-y_i;\alpha,1-\beta)}.\\
\end{equation}
Connecting further (\ref{eq:detanalIeer11j}), (\ref{eq:detanalIeer11k}), and (\ref{eq:hdg1y}) gives
\begin{eqnarray}
& & \erf(y_e)  =
\frac{f_1(y_i;\alpha,\beta)+f_1(-y_i;\alpha,1-\beta)}{f_1(y_i;\alpha,\beta)-f_1(-y_i;\alpha,1-\beta)}\nonumber \\
&\Longleftrightarrow & y_e  =\erfinv\lp
\frac{f_1(y_i;\alpha,\beta)+f_1(-y_i;\alpha,1-\beta)}{f_1(y_i;\alpha,\beta)-f_1(-y_i;\alpha,1-\beta)}\rp.
\label{eq:hdg1z}
\end{eqnarray}
A combination of (\ref{eq:hdg1p}) and (\ref{eq:hdg1s})
\begin{multline}\label{eq:hdg1za}
\gamma_g=
\frac{\alpha}{2}\lp -\sqrt{\pi} e^{y_e^2} y_e\erfc(-y_e)-1\rp\erfc(y_e)+\frac{\erfc(y_e)}{2}
=\frac{\alpha}{2}\lp\sqrt{\pi} e^{y_i^2} y_i\erfc(y_i)\erfc(-y_i)-\erfc(-y_i)\rp+\beta\\
=\frac{-f_1(-y_i;\alpha,1-\beta)\alpha\lp -\sqrt{\pi} e^{y_i^2} y_i\erfc(y_i)+1\rp}{2e^{y_i^2} y_i}.
\end{multline}
Transforming further one also has
\begin{multline}\label{eq:hdg1zb}
\alpha\lp -\sqrt{\pi} e^{y_e^2} y_e\erfc(-y_e)-1\rp+1
=\frac{\lp f_1(y_i;\alpha,\beta)-f_1(-y_i;\alpha,1-\beta)\rp\alpha\lp -\sqrt{\pi} e^{y_i^2} y_i\erfc(y_i)+1\rp}{2e^{y_i^2} y_i},
\end{multline}
and
\begin{multline}\label{eq:hdg1zc}
e^{y_e^2} y_e\erfc(-y_e)
=\frac{\lp f_1(y_i;\alpha,\beta)-f_1(-y_i;\alpha,1-\beta)\rp\alpha\lp -\sqrt{\pi} e^{y_i^2} y_i\erfc(y_i)+1\rp-2e^{y_i^2} y_i+2e^{y_i^2} y_i\alpha}{-\sqrt{\pi}2e^{y_i^2} y_i\alpha}.
\end{multline}
Now we will argue that the right side in (\ref{eq:hdg1zc}) is equal to $f_1(y_i;\alpha,\beta)$. That will follow if
\begin{multline}\label{eq:hdg1zd}
\lp f_1(y_i;\alpha,\beta)-f_1(-y_i;\alpha,1-\beta)\rp\alpha\lp -\sqrt{\pi} e^{y_i^2} y_i\erfc(y_i)+1\rp-2e^{y_i^2} y_i+2e^{y_i^2} y_i\alpha=-\sqrt{\pi}2e^{y_i^2} y_i\alpha f_1(y_i;\alpha,\beta),
\end{multline}
or
\begin{multline}\label{eq:hdg1ze}
-f_1(-y_i;\alpha,1-\beta)\alpha\lp -\sqrt{\pi} e^{y_i^2} y_i\erfc(y_i)+1\rp-2e^{y_i^2} y_i+2e^{y_i^2} y_i\alpha=
 f_1(y_i;\alpha,\beta)\alpha\lp -\sqrt{\pi} e^{y_i^2} y_i\erfc(-y_i)-1\rp.
\end{multline}
Utilizing (\ref{eq:detanalIeer11j}) and (\ref{eq:detanalIeer11k}), (\ref{eq:hdg1ze}) can be rewritten in the following way
\begin{multline}\label{eq:hdg1zf}
\frac{\alpha}{2}\lp\sqrt{\pi} e^{y_i^2} y_i\erfc(y_i)\erfc(-y_i)-\erfc(-y_i)\rp+\beta-1+\alpha=
-\lp(1-\beta)
-\lp\sqrt{\pi}y_ie^{y_i^2}\erfc(-y_i)+1\rp\frac{\alpha}{2}\erfc(y_i)\rp.
\end{multline}
Since (\ref{eq:hdg1zf}) indeed holds one then has that (\ref{eq:hdg1ze}) also holds and through (\ref{eq:hdg1zd}) we have that the right side of (\ref{eq:hdg1zc}) is indeed equal to $f_1(y_i;\alpha,\beta)$. Combining further (\ref{eq:hdg1z}) and (\ref{eq:hdg1zc}) we finally arrive at
\begin{eqnarray}\label{eq:hdg1zg}
& & e^{y_e^2} y_e\erfc(-y_e)
=f_1(y_i;\alpha,\beta)\nonumber \\
& \Longleftrightarrow &  2\erfinv\lp
\frac{f_1(y_i;\alpha,\beta)+f_1(-y_i;\alpha,1-\beta)}{f_1(y_i;\alpha,\beta)-f_1(-y_i;\alpha,1-\beta)}\rp
\frac{e^{\erfinv\lp
\frac{f_1(y_i;\alpha,\beta)+f_1(-y_i;\alpha,1-\beta)}{f_1(y_i;\alpha,\beta)-f_1(-y_i;\alpha,1-\beta)}\rp
^2}}{f_1(y_i;\alpha,\beta)-f_1(-y_i;\alpha,1-\beta)}
=1.
\end{eqnarray}
(\ref{eq:hdg1zg}) is enough to determine $y_i$. Then from (\ref{eq:hdg1z}) one can determine $y_e$ and from (\ref{eq:hdg1p}) or (\ref{eq:hdg1s}) $\gamma_g$ (of course, comparing (\ref{eq:hdg1zg}) and (\ref{eq:hdg1z}) to (\ref{eq:detanalIeer11l}) and (\ref{eq:detanalIeer11k}), respectively one observes that $y_i=y_2$ and $y_e=y_1$). One can then use these values for $y_i$, $y_e$, and $\gamma_g$ to determine the optimal value of $\zeta_{\alpha,\beta}(\gamma_g,y_e,y_i)$ in (\ref{eq:hdg1a}) through (\ref{eq:hdg1b}). Such a value will give $I^{(bin)}_{err}(\alpha,\beta)$ in (\ref{eq:hdg1}). Here we will try to be a bit more explicit. Namely, we will connect the optimal $\zeta_{\alpha,\beta}(\gamma_g,y_e,y_i)$ to what we presented in Section \ref{sec:ldp}.

\subsubsection{Computing $I^{(bin)}_{err}(\alpha,\beta)$}
\label{sec:hdgcompI}

We start by noting that $\zeta_{\alpha,\beta}(\gamma_g,y_e,y_i)$ given in (\ref{eq:hdg1b}) consists of three components, namely, $\psicom$, $\psiint$, and $\psiext$ given in (\ref{eq:hdg2}). Instead of working directly with $\zeta_{\alpha,\beta}(\gamma_g,y_e,y_i)$ we will first deal with each of $\psicom$, $\psiint$, and $\psiext$. From this point on we assume that $y_i$, $y_e$, and $\gamma_g$ take the values determined through the procedure explained above. Then we have for $\psicom$
\begin{eqnarray}\label{eq:hdg1zh}
  \psicom &=& -(1-\beta)H\lp \frac{1-\alpha-\gamma_g}{1-\beta} \rp-\beta H\lp \frac{\beta-\gamma_g}{\beta} \rp \nonumber \\
&=& -(1-\alpha-\gamma_g)\log\lp \frac{1-\alpha-\gamma_g}{1-\beta}\rp-(\alpha+\gamma_g-\beta)\log\lp\frac{\alpha+\gamma_q-\beta}{1-\beta}\rp\nonumber \\
&& -(\beta-\gamma_g) \log\lp \frac{\beta-\gamma_g}{\beta} \rp-\gamma_g \log\lp \frac{\gamma_g}{\beta} \rp \nonumber \\
&=& -\frac{1}{2}\lp 2(1-\beta)-\alpha\lp\sqrt{\pi} e^{y_i^2} y_i\erfc(-y_i)+1\rp\erfc(y_i)\rp\nonumber \\
& & \times
\log\lp \frac{\lp2(1-\beta)-\alpha\lp\sqrt{\pi} e^{y_i^2} y_i\erfc(-y_i)+1\rp\erfc(y_i)\rp}{2(1-\beta)}\rp \nonumber \\
&& -\frac{\alpha}{2}\erfc(y_i)\lp\sqrt{\pi} e^{y_i^2} y_i\erfc(-y_i)+1\rp\log\lp\frac{\alpha\erfc(y_i)\lp\sqrt{\pi} e^{y_i^2} y_i\erfc(-y_i)+1\rp}{2(1-\beta)}\rp\nonumber \\
&& -\frac{\alpha}{2}\erfc(-y_i)\lp-\sqrt{\pi} e^{y_i^2} y_i\erfc(y_i)+1\rp \log\lp \frac{\alpha\erfc(-y_i)\lp-\sqrt{\pi} e^{y_i^2} y_i\erfc(y_i)+1\rp}{2\beta} \rp\nonumber \\
&& -\frac{1}{2}\lp 2\beta+\alpha\lp\sqrt{\pi} e^{y_i^2} y_i\erfc(y_i)-1\rp\erfc(-y_i)\rp\log\lp \frac{2\beta+\alpha\lp\sqrt{\pi} e^{y_i^2} y_i\erfc(y_i)-1\rp\erfc(-y_i)}{2\beta} \rp. \nonumber \\
\end{eqnarray}
Similarly we have for $\psiint$
\begin{eqnarray}\label{eq:hdg1zi}
  \psiint &=& \alpha y_i^2 +(\alpha-\beta+\gamma_g)\log(\erfc(y_i))+(\beta-\gamma_g)\log(\erfc(-y_i))- \alpha \log(2) \nonumber \\
&=& \alpha y_i^2 +\frac{\alpha}{2}\erfc(y_i)\lp\sqrt{\pi} e^{y_i^2} y_i\erfc(-y_i)+1\rp\log(\erfc(y_i))\nonumber \\
& & +\frac{\alpha}{2}\erfc(-y_i)\lp-\sqrt{\pi} e^{y_i^2} y_i\erfc(y_i)+1\rp\log(\erfc(-y_i))- \alpha \log(2), \nonumber \\
\end{eqnarray}
and finally for $\psiext$
\begin{eqnarray}\label{eq:hdg1zj}
  \psiext &=& -\alpha y_e^2 +(1-\alpha-\gamma_g)\log(\erfc(-y_e))+\gamma_g\log(\erfc(y_e))-(1-\alpha)\log(2) \nonumber \\
&=& -\alpha y_e^2 +\frac{1}{2}\lp 2(1-\beta)-\alpha\lp\sqrt{\pi} e^{y_i^2} y_i\erfc(-y_i)+1\rp\erfc(y_i)\rp
\log(\erfc(-y_e))\nonumber \\
& & +\frac{1}{2}\lp 2\beta+\alpha\lp\sqrt{\pi} e^{y_i^2} y_i\erfc(y_i)-1\rp\erfc(-y_i)\rp\log(\erfc(y_e))-(1-\alpha)\log(2). \nonumber \\
\end{eqnarray}
Combining (\ref{eq:hdg1zh}), (\ref{eq:hdg1zi}), and (\ref{eq:hdg1zj}) we also have
\begin{eqnarray}\label{eq:hdg1zk}
  \psinet^{(bin)}
&=& -\frac{1}{2}\lp 2(1-\beta)-\alpha\lp\sqrt{\pi} e^{y_i^2} y_i\erfc(-y_i)+1\rp\erfc(y_i)\rp \nonumber \\
& & \times
\log\lp \frac{\lp2(1-\beta)-\alpha\lp\sqrt{\pi} e^{y_i^2} y_i\erfc(-y_i)+1\rp\erfc(y_i)\rp}{2(1-\beta)\erfc(-y_e)}\rp \nonumber \\
&& -\frac{\alpha}{2}\erfc(y_i)\lp\sqrt{\pi} e^{y_i^2} y_i\erfc(-y_i)+1\rp\log\lp\frac{\alpha\lp\sqrt{\pi} e^{y_i^2} y_i\erfc(-y_i)+1\rp}{2(1-\beta)}\rp\nonumber \\
&& -\frac{\alpha}{2}\erfc(-y_i)\lp-\sqrt{\pi} e^{y_i^2} y_i\erfc(y_i)+1\rp \log\lp \frac{\alpha\lp-\sqrt{\pi} e^{y_i^2} y_i\erfc(y_i)+1\rp}{2\beta} \rp\nonumber \\
&& -\frac{1}{2}\lp 2\beta+\alpha\lp\sqrt{\pi} e^{y_i^2} y_i\erfc(y_i)-1\rp\erfc(-y_i)\rp\log\lp \frac{2\beta+\alpha\lp\sqrt{\pi} e^{y_i^2} y_i\erfc(y_i)-1\rp\erfc(-y_i)}{2\beta\erfc(y_e)} \rp \nonumber \\
&& +\alpha y_i^2-\alpha y_e^2-\log(2).
\end{eqnarray}
Utilizing again (\ref{eq:detanalIeer11j}) and (\ref{eq:detanalIeer11k}), (\ref{eq:hdg1zk}) can be further transformed
\begin{eqnarray}\label{eq:hdg1zl}
  \psinet^{(bin)}
&=& -\frac{1}{2}\lp 2(1-\beta)-\alpha\lp\sqrt{\pi} e^{y_i^2} y_i\erfc(-y_i)+1\rp\erfc(y_i)\rp \nonumber \\
& & \times
\log\lp \frac{ y_ee^{y_e^2}\erfc(-y_e)\alpha\lp\sqrt{\pi}y_ie^{y_i^2}\erfc(-y_i)+1\rp}{2(1-\beta)\erfc(-y_e)y_ie^{y_i^2}}\rp \nonumber \\
&& -\frac{\alpha}{2}\erfc(y_i)\lp\sqrt{\pi} e^{y_i^2} y_i\erfc(-y_i)+1\rp\log\lp\frac{\alpha\lp\sqrt{\pi} e^{y_i^2} y_i\erfc(-y_i)+1\rp}{2(1-\beta)}\rp\nonumber \\
&& -\frac{\alpha}{2}\erfc(-y_i)\lp-\sqrt{\pi} e^{y_i^2} y_i\erfc(y_i)+1\rp \log\lp \frac{\alpha\lp-\sqrt{\pi} e^{y_i^2} y_i\erfc(y_i)+1\rp}{2\beta} \rp\nonumber \\
&& -\frac{1}{2}\lp 2\beta+\alpha\lp\sqrt{\pi} e^{y_i^2} y_i\erfc(y_i)-1\rp\erfc(-y_i)\rp\log\lp \frac{y_ee^{y_e^2}\erfc(y_e)\alpha\lp -\sqrt{\pi}y_ie^{y_i^2}\erfc(y_i)+1\rp}{2\beta\erfc(y_e)y_ie^{y_i^2}} \rp \nonumber \\
&& +\alpha y_i^2-\alpha y_e^2-\log(2).
\end{eqnarray}
Continuing further we also obtain
\begin{eqnarray}\label{eq:hdg1zm}
  \psinet^{(bin)}
&=& -\frac{1}{2}\lp 2(1-\beta)-\alpha\lp\sqrt{\pi} e^{y_i^2} y_i\erfc(-y_i)+1\rp\erfc(y_i)\rp\log\lp \frac{ y_ee^{y_e^2}}{y_ie^{y_i^2}}\rp \nonumber \\
&& -\frac{1}{2}\lp 2\beta+\alpha\lp\sqrt{\pi} e^{y_i^2} y_i\erfc(y_i)-1\rp\erfc(-y_i)\rp\log\lp \frac{y_ee^{y_e^2}}{y_ie^{y_i^2}} \rp \nonumber \\
&& -\frac{1}{2}\lp 2(1-\beta)-\alpha\lp\sqrt{\pi} e^{y_i^2} y_i\erfc(-y_i)+1\rp\erfc(y_i)\rp\log\lp \frac{ \alpha\lp\sqrt{\pi}y_ie^{y_i^2}\erfc(-y_i)+1\rp}{2(1-\beta)}\rp \nonumber \\
&& -\frac{1}{2}\lp 2\beta+\alpha\lp\sqrt{\pi} e^{y_i^2} y_i\erfc(y_i)-1\rp\erfc(-y_i)\rp\log\lp \frac{\alpha\lp -\sqrt{\pi}y_ie^{y_i^2}\erfc(y_i)+1\rp}{2\beta} \rp \nonumber \\
&& -\frac{\alpha}{2}\erfc(y_i)\lp\sqrt{\pi} e^{y_i^2} y_i\erfc(-y_i)+1\rp\log\lp\frac{\alpha\lp\sqrt{\pi} e^{y_i^2} y_i\erfc(-y_i)+1\rp}{2(1-\beta)}\rp\nonumber \\
&& -\frac{\alpha}{2}\erfc(-y_i)\lp-\sqrt{\pi} e^{y_i^2} y_i\erfc(y_i)+1\rp \log\lp \frac{\alpha\lp-\sqrt{\pi} e^{y_i^2} y_i\erfc(y_i)+1\rp}{2\beta} \rp\nonumber \\
&& +\alpha y_i^2-\alpha y_e^2-\log(2),
\end{eqnarray}
and
\begin{eqnarray}\label{eq:hdg1zn}
  \psinet^{(bin)}
&=& -(1-\alpha)\log\lp \frac{ y_ee^{y_e^2}}{y_ie^{y_i^2}}\rp +(1-\beta)\log\lp \frac{2(1-\beta)}{ \alpha\lp\sqrt{\pi}y_ie^{y_i^2}\erfc(-y_i)+1\rp}\rp \nonumber \\
&& +\beta\log\lp \frac{2\beta}{\alpha\lp -\sqrt{\pi}y_ie^{y_i^2}\erfc(y_i)+1\rp}\rp  +\alpha y_i^2-\alpha y_e^2-\log(2).
\end{eqnarray}
A couple of additional cosmetic changes finally give
\begin{eqnarray}\label{eq:hdg1zo}
  \psinet^{(bin)}=I^{(bin)}_{err}(\alpha,\beta)=
&=& (\alpha-1)\log\lp \frac{ y_e}{y_i}\rp +(1-\beta)\log\lp \frac{1-\beta}{ \alpha\lp\sqrt{\pi}y_ie^{y_i^2}\erfc(-y_i)+1\rp}\rp \nonumber \\
&& +\beta\log\lp \frac{\beta}{\alpha\lp -\sqrt{\pi}y_ie^{y_i^2}\erfc(y_i)+1\rp}\rp  + y_i^2-y_e^2=I^{(bin)}_{ldp}(\alpha,\beta).
\end{eqnarray}
Comparing (\ref{eq:detanalIeer11mb}) and (\ref{eq:hdg1zo}) one recognizes that $I^{(bin)}_{err}=I^{(bin)}_{ldp}=I^{(bin,ub)}_{err,u}$ (of course $y_e\leftrightarrow y_1$ and $y_i\leftrightarrow y_2$) and observes that the choice for $\nu$, $A_0$, $c_3$, and $\gamma$ made in (\ref{eq:detanalIeer11m}) is indeed optimal. Moreover, in the lower tail regime ($\alpha<\alpha_w$, where $\alpha_w$ is such that $\psi^{(bin)}_{\beta}(\alpha_w)=\xi^{(bin)}_{\alpha_w}(\beta)=1$) considerations from \cite{Stojnicl1BnBxfinn} ensure that one also has
\begin{equation}\label{eq:hdg1aa}
  \Psi^{(bin)}_{net}(\alpha,\beta)=I^{(bin)}_{cor}(\alpha,\beta)\triangleq\lim_{n\rightarrow\infty}\frac{\log{P_{cor}}}{n}=\psicom+\psiint-\psiext,
\end{equation}
where $\psicom$, $\psiint$, and $\psiext$ are as in (\ref{eq:hdg2}). The above considerations then automatically fully characterize the binary $\ell_1$'s LDP. The characterization is summarized in the following theorem.
\begin{theorem}[Binary $\ell_1$'s LDP]
Assume the setup of Theorem \ref{thm:thmweakthr} and assume that a pair $(\alpha,\beta)$ is given. Let $P_{err}$ be the probability that the solutions of (\ref{eq:l0}) and (\ref{eq:l1bin}) coincide and let $P_{cor}$ be the probability that the solutions of (\ref{eq:l0}) and (\ref{eq:l1bin}) do \emph{not} coincide. Set
\begin{equation}\label{eq:thmfinalldpl11}
 f_1(y_2;\alpha,\beta) \triangleq  \frac{1-\beta}{\alpha}
\lp
\frac{2y_2e^{y_2^2}}{\sqrt{\pi}y_2e^{y_2^2}\erfc(-y_2)+1}\rp-y_2e^{y_2^2}\erfc(y_2).
\end{equation}
Also let $y_2$ and $y_1$ satisfy the following \textbf{fundamental characterizations of the binary $\ell_1$'s LDP} and achieve the optimum in (\ref{eq:hdg1a}):

\begin{eqnarray}
2\erfinv\lp
\frac{f_1(y_2;\alpha,\beta)+f_1(-y_2;\alpha,1-\beta)}{f_1(y_2;\alpha,\beta)-f_1(-y_2;\alpha,1-\beta)}\rp
\frac{e^{\erfinv\lp
\frac{f_1(y_2;\alpha,\beta)+f_1(-y_2;\alpha,1-\beta)}{f_1(y_2;\alpha,\beta)-f_1(-y_2;\alpha,1-\beta)}\rp^2}}
{f_1(y_2;\alpha,\beta)-f_1(-y_2;\alpha,1-\beta)}
& = & 1.\nonumber \\
\label{eq:thmfinalldpl12a}
\end{eqnarray}
and
\begin{eqnarray}
y_1  =  \erfinv\lp
\frac{f_1(y_2;\alpha,\beta)+f_1(-y_2;\alpha,1-\beta)}{f_1(y_2;\alpha,\beta)-f_1(-y_2;\alpha,1-\beta)}\rp.\nonumber \\
\label{eq:thmfinalldpl12b}
\end{eqnarray}

\noindent Finally, let $I^{(bin)}_{ldp}(\alpha,\beta)$ be defined through the following \textbf{binary $\ell_1$'s fundamental LDP rate function} characterization
\begin{eqnarray}
I^{(bin)}_{ldp}(\alpha,\beta)&\triangleq & (\alpha-1)\log\lp\frac{y_1}{y_2}\rp+(1-\beta)\log \lp \frac{1-\beta}{\alpha\lp\sqrt{\pi}e^{y_2^2}y_2\erfc(-y_2)+1\rp}\rp\nonumber \\
& & +\beta\log\lp \frac{\beta}{\alpha\lp -\sqrt{\pi}e^{y_2^2}y_2\erfc(y_2)+1\rp}\rp + y_2^2-y_1^2.
\label{eq:thmfinalldpl13}
\end{eqnarray}
Then if $\alpha>\alpha_w$
\begin{equation}
I^{(bin)}_{err}(\alpha,\beta)\triangleq\lim_{n\rightarrow\infty}\frac{\log{P_{err}}}{n}=I^{(bin)}_{ldp}(\alpha,\beta).\label{eq:thmfinalldpl14}
\end{equation}
Moreover, if $\alpha<\alpha_w$
\begin{equation}
I^{(bin)}_{cor}(\alpha,\beta)\triangleq\lim_{n\rightarrow\infty}\frac{\log{P_{cor}}}{n}=I^{(bin)}_{ldp}(\alpha,\beta).\label{eq:thmfinalldpl15}
\end{equation}\label{thm:finalldpl1}
\end{theorem}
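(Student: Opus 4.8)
The plan is to assemble the theorem from the two complementary developments already carried out above: the purely probabilistic bound of Theorem~\ref{thm:ldp3}, which gives $I^{(bin)}_{err}(\alpha,\beta)\le I^{(bin)}_{ldp}(\alpha,\beta)$, and the \emph{exact} integral-geometry identity (\ref{eq:hdg1}) imported from \cite{Stojnicl1BnBxfinn}, namely $I^{(bin)}_{err}(\alpha,\beta)=\max_{\gamma_g}(\psicom+\psiint+\psiext)$ (which encodes the exponent of the probability that the random null space of $A$ contains a vector violating the deterministic condition of Theorem~\ref{thm:thmknownsuppcond}). Because (\ref{eq:hdg1}) is an \emph{equality}, it suffices to evaluate its right-hand side in closed form and to check that the result coincides with $I^{(bin)}_{ldp}(\alpha,\beta)$; the bound of Theorem~\ref{thm:ldp3} then re-enters only as a consistency cross-check. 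The lower-tail statement (\ref{eq:thmfinalldpl15}) will follow from the companion identity (\ref{eq:hdg1aa}) together with the $c_3\to-c_3$ reflection already used in Theorem~\ref{thm:ldp3lower}.

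First I would treat the upper tail, $\alpha>\alpha_w$. Rewrite $\max_{\gamma_g}(\psicom+\psiint+\psiext)$ as the saddle-point problem (\ref{eq:hdg1a})--(\ref{eq:hdg1b}) for $\zeta_{\alpha,\beta}(\gamma_g,y_e,y_i)$, and establish, via the second-derivative computations of Sections~\ref{sec:hdgconcgammag}--\ref{sec:hdgconcye} (the inequalities (\ref{eq:hdg1oa}), (\ref{eq:hdg1g}) and (\ref{eq:hdg1m})), that $\zeta$ is concave in $\gamma_g$ and in $y_e$ and convex in $y_i$ on the domain of interest; in the hard regime considered throughout, this guarantees a unique interior saddle point, so that solving the stationarity system (\ref{eq:hdg1n}) is both necessary and sufficient. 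I would then solve that system: $d\zeta/dy_i=0$ expresses $\gamma_g$ through $y_i$ in (\ref{eq:hdg1p}); $d\zeta/d\gamma_g=0$, after introducing $A_{bin}$ in (\ref{eq:hdg1q}), expresses $y_e$ through $y_i$ in (\ref{eq:hdg1r}); substituting both into $d\zeta/dy_e=0$ and carrying out the manipulations (\ref{eq:hdg1t})--(\ref{eq:hdg1zf}) yields the key identity $e^{y_e^2}y_e\erfc(-y_e)=f_1(y_i;\alpha,\beta)$ of (\ref{eq:hdg1zc}) and (\ref{eq:hdg1zg}), which together with (\ref{eq:hdg1z}) is precisely the fundamental characterization (\ref{eq:thmfinalldpl12a})--(\ref{eq:thmfinalldpl12b}) under the identification $y_i=y_2$, $y_e=y_1$.

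It then remains to substitute this optimizer back into $\zeta$. I would evaluate the three blocks $\psicom$, $\psiint$, $\psiext$ separately (expressions (\ref{eq:hdg1zh})--(\ref{eq:hdg1zj})), add them (\ref{eq:hdg1zk}), and repeatedly invoke the stationarity relations (\ref{eq:detanalIeer11j})--(\ref{eq:detanalIeer11k}) to collapse the nested logarithms (\ref{eq:hdg1zl})--(\ref{eq:hdg1zn}), arriving at the closed form (\ref{eq:hdg1zo}), i.e.\ $\psinet^{(bin)}=I^{(bin)}_{ldp}(\alpha,\beta)$. Together with (\ref{eq:hdg1}) this proves (\ref{eq:thmfinalldpl14}); comparing (\ref{eq:hdg1zo}) with the probabilistic expression (\ref{eq:detanalIeer11mb}) confirms $I^{(bin)}_{err}=I^{(bin)}_{ldp}=I^{(bin,ub)}_{err,u}$ and, as a by-product, that the choice (\ref{eq:detanalIeer11m}) of $\nu,A_0,c_3,\gamma$ is globally optimal. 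For the lower tail, $\alpha<\alpha_w$, I would invoke (\ref{eq:hdg1aa}): $I^{(bin)}_{cor}=\psicom+\psiint-\psiext$. Since $\psicom$ and $\psiint$ are unchanged and $\psiext$ already carries its internal optimization over $y_e$, flipping the sign of that block corresponds exactly to the reflection $c_3\to-c_3$ (equivalently the branch $A_0>1$, $y_1>y_2$) of Theorem~\ref{thm:ldp3lower}; the stationarity equations keep the form (\ref{eq:thmfinalldpl12a})--(\ref{eq:thmfinalldpl12b}) and return the same rate $I^{(bin)}_{ldp}$, which is (\ref{eq:thmfinalldpl15}).

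The main obstacle is the reduction just sketched: (i) showing that the three integral-geometry stationarity equations collapse to the \emph{same} single transcendental equation (\ref{eq:thmfinalldpl12a}) already produced by the probabilistic route — this turns on the non-obvious algebraic identity (\ref{eq:hdg1zf})/(\ref{eq:hdg1zc}) making $e^{y_e^2}y_e\erfc(-y_e)$ equal to $f_1(y_i;\alpha,\beta)$ — and (ii) verifying that the value of $\zeta$ at the optimizer telescopes to the explicit $I^{(bin)}_{ldp}$ rather than to a bulkier expression. These are the ``pure magic'' cancellations flagged in the text; the remaining ingredients (the concavity/convexity checks, the exclusion of boundary optima in the hard regime, and the lower-tail reflection) are routine once those identities are in place.
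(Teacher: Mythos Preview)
Your proposal is correct and follows essentially the same route as the paper: you invoke the exact high-dimensional geometry identity (\ref{eq:hdg1}) from \cite{Stojnicl1BnBxfinn}, establish the concavity/convexity of $\zeta_{\alpha,\beta}$ in $(\gamma_g,y_e,y_i)$ via (\ref{eq:hdg1oa}), (\ref{eq:hdg1f})--(\ref{eq:hdg1g}), (\ref{eq:hdg1i})--(\ref{eq:hdg1m}), solve the stationarity system (\ref{eq:hdg1n}) down to the single transcendental equation (\ref{eq:hdg1zg}) matching (\ref{eq:thmfinalldpl12a})--(\ref{eq:thmfinalldpl12b}) under $y_i\leftrightarrow y_2$, $y_e\leftrightarrow y_1$, and then substitute back through (\ref{eq:hdg1zh})--(\ref{eq:hdg1zo}) to obtain $\psinet^{(bin)}=I^{(bin)}_{ldp}$. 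The lower-tail treatment via (\ref{eq:hdg1aa}) and the $c_3\to -c_3$ reflection of Theorem~\ref{thm:ldp3lower} is likewise what the paper does; the paper's proof is simply ``Follows from the above discussion,'' and your outline is an accurate reconstruction of that discussion.
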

\begin{proof} Follows from the above discussion.
\end{proof}

\subsubsection{Reestablishing the phase transitions}
\label{sec:rephasetrans}

In this section we will show how one can reestablish the phase transition results from Section \ref{sec:phasetrans} utilizing Theorem \ref{thm:finalldpl1} and the above considerations leading up to Theorem \ref{thm:finalldpl1}. We start by focusing at pairs $(\alpha,\beta)$  for which $y_1=y_2$ in Theorem \ref{thm:finalldpl1} (of course, we may not know a priori if such pairs do exist; nonetheless, we will make such an assumption and since we will not contradict it through the derivation below, it will follow that the assumption is actually correct). From (\ref{eq:detanalIeer11h}) and (\ref{eq:detanalIeer11i}) we have
\begin{eqnarray}\label{eq:rephasetran1}
y_2e^{y_2^2}\erfc(-y_2) & = & y_1e^{y_1^2}\erfc(-y_1)=\frac{1-\beta}{\alpha}
\lp
\frac{2y_2e^{y_2^2}}{\sqrt{\pi}y_2e^{y_2^2}\erfc(-y_2)+1}\rp-y_2e^{y_2^2}\erfc(y_2)=f_1(y_2;\alpha,\beta)\nonumber \\
 y_2e^{y_2^2}\erfc(y_2) & = & y_1e^{y_1^2}\erfc(y_1) =\frac{\beta}{\alpha}
\lp
\frac{2y_2e^{y_2^2}}{-\sqrt{\pi}y_2e^{y_2^2}\erfc(y_2)+1}\rp-y_2e^{y_2^2}\erfc(-y_2)=-f_1(-y_2;\alpha,1-\beta).\nonumber \\
\end{eqnarray}
A couple of simple algebraic operations transform (\ref{eq:rephasetran1}) into the following
\begin{eqnarray}\label{eq:rephasetran2}
1=\frac{1-\beta}{\alpha}
\lp
\frac{1}{\sqrt{\pi}y_2e^{y_2^2}\erfc(-y_2)+1}\rp\nonumber \\
1 =\frac{\beta}{\alpha}
\lp
\frac{1}{-\sqrt{\pi}y_2e^{y_2^2}\erfc(y_2)+1}\rp.
\end{eqnarray}
Transforming a bit further we also have
\begin{eqnarray}\label{eq:rephasetran3}
\sqrt{\pi}y_2e^{y_2^2}\erfc(-y_2) & = & \frac{1-\beta}{\alpha}
-1\nonumber \\
\sqrt{\pi} y_2e^{y_2^2}\erfc(y_2) & = & 1-\frac{\beta}{\alpha},
\end{eqnarray}
and
\begin{eqnarray}\label{eq:rephasetran4}
\frac{1+\erf(y_2)}{1-\erf(y_2)}=\frac{\erfc(-y_2)}{\erfc(y_2)}=\frac{1-\alpha-\beta}{\alpha-\beta}.
\end{eqnarray}
From (\ref{eq:rephasetran4}) we then easily have
\begin{eqnarray}\label{eq:rephasetran5}
& & \erf(y_2)=\frac{\frac{1-\alpha-\beta}{\alpha-\beta}-1}{\frac{1-\alpha-\beta}{\alpha-\beta}+1}=\frac{1-2\alpha}{1-2\beta}\nonumber \\
\Longleftrightarrow & & y_2=\erfinv\lp\frac{1-2\alpha}{1-2\beta}\rp.
\end{eqnarray}
A combination of (\ref{eq:rephasetran3}) and (\ref{eq:rephasetran5}) then gives
\begin{eqnarray}\label{eq:rephasetran6}
\sqrt{\pi}y_2e^{y_2^2}\erfc(-y_2) & = & \sqrt{\pi}y_2e^{y_2^2}(1+\erf(y_2))=\sqrt{\pi}y_2e^{y_2^2}\frac{2-2\beta-2\alpha}{1-2\beta}=
\frac{1-\beta-\alpha}{\alpha}.
\end{eqnarray}
Finally combining (\ref{eq:rephasetran5}) and (\ref{eq:rephasetran6}) we obtain
\begin{eqnarray}\label{eq:rephasetran7}
\frac{(1-2\beta)e^{-y_2^2}}{2\sqrt{\pi}\alpha y_2}=\frac{(1-2\beta)e^{-\lp\erfinv\lp\frac{1-2\alpha}{1-2\beta}\rp\rp^2}}{2\sqrt{\pi}\alpha \erfinv\lp\frac{1-2\alpha}{1-2\beta}\rp}=1.
\end{eqnarray}
It is not that hard to see that (\ref{eq:rephasetran7}) is exactly the same as (\ref{eq:thmweaktheta2}). Moreover, from (\ref{eq:rephasetran1}) we also have
\begin{eqnarray}\label{eq:rephasetran8}
f_1(y_2;\alpha,\beta)-f_1(-y_2;\alpha,1-\beta)=
y_2e^{y_2^2}\erfc(-y_2)+ y_2e^{y_2^2}\erfc(y_2)=2y_2e^{y_2^2},
\end{eqnarray}
and
\begin{eqnarray}\label{eq:rephasetran9}
f_1(y_2;\alpha,\beta)+f_1(-y_2;\alpha,1-\beta)=
y_2e^{y_2^2}\erfc(-y_2)- y_2e^{y_2^2}\erfc(y_2)=2y_2e^{y_2^2}\erf(y_2).
\end{eqnarray}
Now, if $(\alpha,\beta)$ are such that (\ref{eq:rephasetran7}) holds then $y_1=y_2$ and (\ref{eq:rephasetran8}) and (\ref{eq:rephasetran9}) ensure that
(\ref{eq:thmfinalldpl12a}) and (\ref{eq:thmfinalldpl12b}) hold and that
$I^{(bin)}_{ldp}(\alpha,\beta)=0$ in (\ref{eq:thmfinalldpl13}) which is exactly the value that $I^{(bin)}_{ldp}(\alpha,\beta)$ takes at the phase transition.

\subsection{Theoretical and numerical LDP results}
\label{sec:thnumresuts}

In this section we present in a bit more concrete way what is actually proven in Theorem \ref{thm:finalldpl1}. The theoretical LDP rate function curve that can obtained based on Theorem \ref{thm:finalldpl1} is shown in Figure \ref{fig:l1regldpIerrub}. Two different values for $\beta$ were selected, $\beta=0.22933$ (which can be obtained from the PT curve for $\alpha=0.4$) and $\beta=\frac{1}{3}$. Also, for $\beta=0.22933$, we in addition to Figure \ref{fig:l1regldpIerrub} provide Table \ref{tab:Ildptab1} that contains the numerical values for all the quantities of interest in Theorems \ref{thm:ldp3} and \ref{thm:finalldpl1}. Finally, in Figure \ref{fig:weakl1LDPthrsim} and Table \ref{tab:Ildptab2} we show how the simulated values compare to the theoretical ones. As can be observed, even for fairly small dimensions (of order $100$) one already approaches the theoretical curves (the theoretical curves are of course derived for an infinite dimensional asymptotic regime).


\begin{figure}[htb]
\begin{minipage}[b]{.5\linewidth}
\centering
\centerline{\epsfig{figure=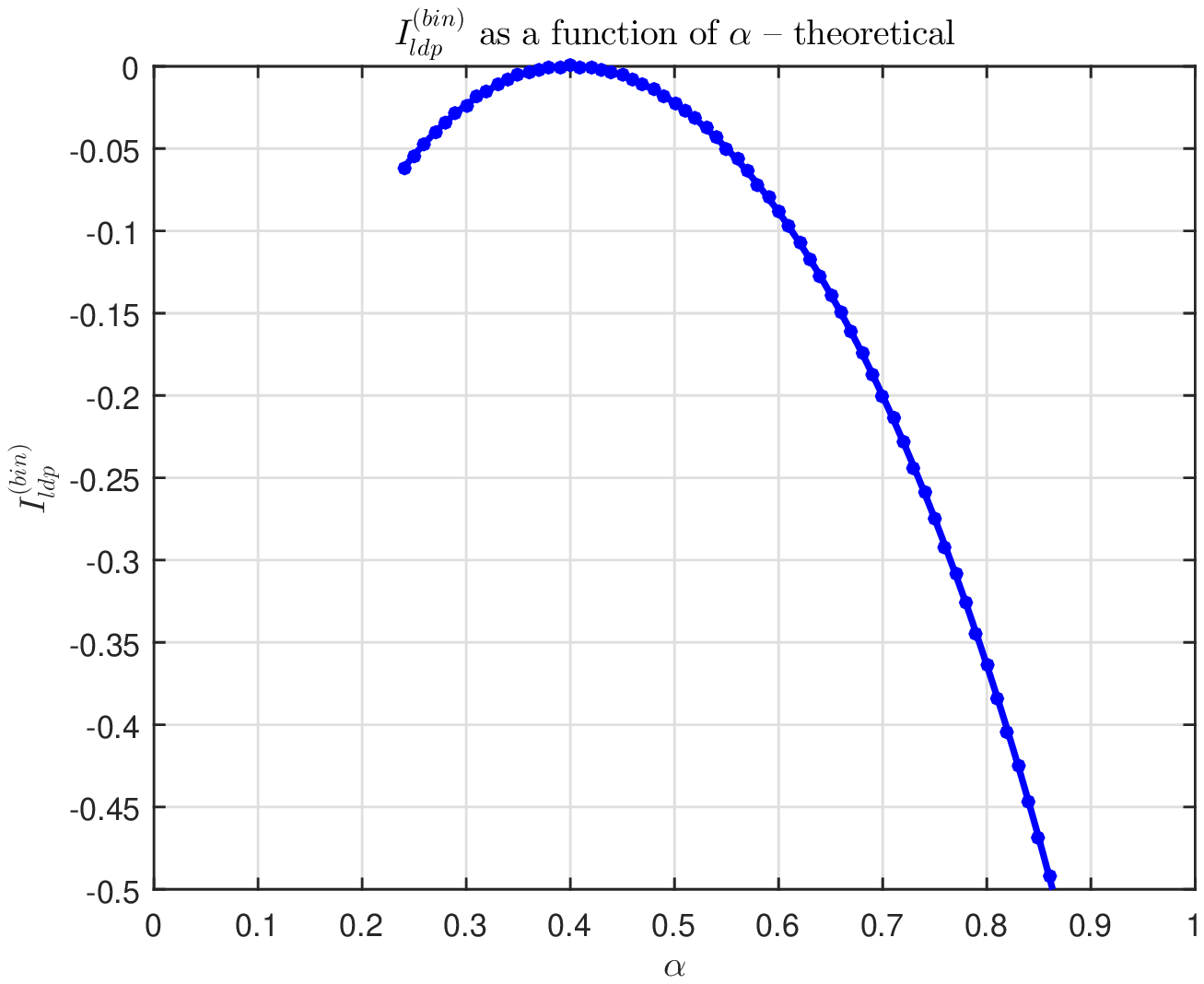,width=9cm,height=7cm}}
\end{minipage}
\begin{minipage}[b]{.5\linewidth}
\centering
\centerline{\epsfig{figure=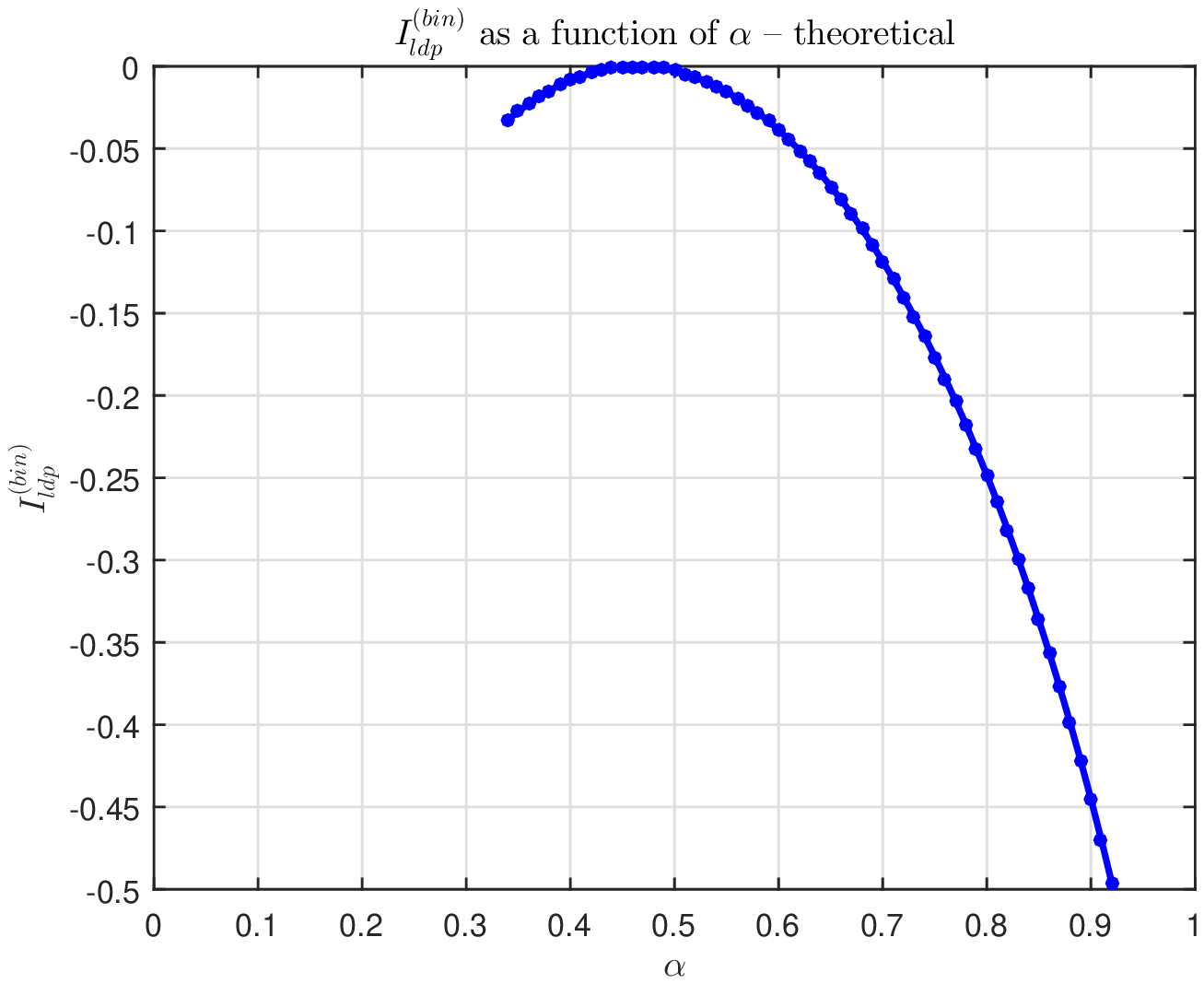,width=9cm,height=7cm}}
\end{minipage}
\caption{$I^{(bin)}_{ldp}$ as a function of $\alpha$; left -- $\beta=0.22933$; right -- $\beta=\frac{1}{3}$}
\label{fig:l1regldpIerrub}
\end{figure}

\begin{table}[h]
\caption{A collection of values for $y_1$, $\gamma_g$, $A_{bin}$, $y_2$, $\nu$, $A_0$, $c_3$, $\gamma$, and $I^{(bin)}_{ldp}$ in Theorem \ref{thm:ldp3}; $\beta=0.22933$}\vspace{.1in}
\hspace{-0in}\centering
\begin{tabular}{||c||c|c|c|c|c||}\hline\hline
$\alpha$     & $ 0.30 $ & $ 0.35 $ & $ 0.40 $ & $ 0.45 $ & $ 0.50 $ \\ \hline\hline
$y_1$        & $ 0.3923 $ & $ 0.3666 $ & $ 0.3401 $ & $ 0.3127 $ & $ 0.2846 $ \\ \hline
$\gamma_g$& $ 0.1026 $ & $ 0.0866 $ & $ 0.0723 $ & $ 0.0596 $ & $ 0.0484 $ \\ \hline
$A_{bin}$    & $ 0.2347 $ & $ 0.2234 $ & $ 0.2120 $ & $ 0.2006 $ & $ 0.1892 $ \\ \hline
$y_2$        & $ 0.2431 $ & $ 0.2907 $ & $ 0.3401 $ & $ 0.3910 $ & $ 0.4432 $ \\ \hline\hline
$\nu$        & $ 0.5548 $ & $ 0.5185 $ & $ 0.4810 $ & $ 0.4422 $ & $ 0.4025 $ \\ \hline
$A_0$        & $ 1.6137 $ & $ 1.2611 $ & $ 1.0000 $ & $ 0.7997 $ & $ 0.6421 $ \\ \hline
$c_3$        & $ -0.5445 $ & $ -0.2770 $ & $ 0.0000 $ & $ 0.3023 $ & $ 0.6471 $ \\ \hline
$\gamma$     & $ 0.1697 $ & $ 0.2346 $ & $ 0.3162 $ & $ 0.4194 $ & $ 0.5506 $ \\ \hline\hline
$I^{(bin)}_{ldp}$    & $ \mathbf{-0.0234} $ & $ \mathbf{-0.0058} $ & $ \mathbf{0.0000} $ & $ \mathbf{-0.0056} $ & $ \mathbf{-0.0223} $ \\ \hline\hline
\end{tabular}
\label{tab:Ildptab1}
\end{table}

\begin{figure}[htb]
\centering
\centerline{\epsfig{figure=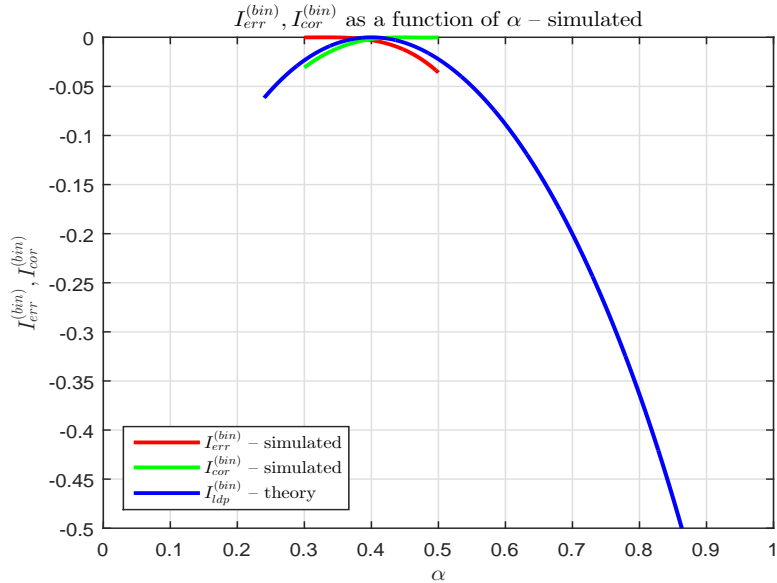,width=11.5cm,height=8cm}}
\caption{Binary $\ell_1$'s weak LDP rate function -- theory and simulation; $\beta=0.22933$}
\label{fig:weakl1LDPthrsim}
\end{figure}

\begin{table}[h]
\caption{$I^{(bin)}_{err}$, $I^{(bin)}_{cor}$ -- simulated; $I^{(bin)}_{ldp}$ calculated for $\beta=0.22933$}\vspace{.1in}
\hspace{-0in}\centering
\begin{tabular}{||c||c|c|c|c|c||}\hline\hline
$\alpha$ & $ 0.30 $ & $ 0.35 $ & $ 0.40 $ & $ 0.45 $ & $ 0.50 $\\ \hline\hline
$k$      & $ 32 $ & $ 69 $ & $ 69 $ & $ 69 $ & $ 32 $ \\ \hline
$m$      & $ 42 $ & $ 105 $ & $ 120 $ & $ 135 $ & $ 70 $ \\ \hline
$n$      & $ 140 $ & $ 300 $ & $ 300 $ & $ 300 $ & $ 140 $ \\ \hline\hline
$I^{(bin)}_{err}$ -- simulated & $ -0.0001 $ & $ -0.0001 $ & \red{$ \mathbf{ -0.0029} $} & \red{$ \mathbf{ -0.0130} $} & \red{$ \mathbf{ -0.0359} $} \\ \hline
$I^{(bin)}_{cor}$ -- simulated & \gr{$ \mathbf{-0.0309} $} & \gr{$ \mathbf{ -0.0110} $} & \gr{$ \mathbf{ -0.0018} $} & $ -0.0001 $ & $ -0.0000 $ \\ \hline\hline
$I^{(bin)}_{ldp}$ -- theory   & \bl{$ \mathbf{-0.0234} $} & \bl{$ \mathbf{-0.0058} $} & \bl{$ \mathbf{0.0000} $} & \bl{$ \mathbf{-0.0056} $} & \bl{$ \mathbf{-0.0223} $} \\ \hline\hline
\end{tabular}
\label{tab:Ildptab2}
\end{table}

\section{Box $\ell_1$}
\label{sec:boxl1}

In Section \ref{sec:binl1} we looked at the properties of the $\ell_1$ from (\ref{eq:l1bin}) when employed for solving (\ref{eq:l0}) a priori known to have the solution that is a binary vector. Here we will instead of binary vectors consider the so-called box-constrained vectors (for more on these see, e.g. \cite{DTbern}). These vectors are typically viewed as a more general class of binary vectors. Namely, any component of a box-constrained vector is assumed to be within a real interval (as earlier, without the loss of generality, we will assume that this interval is $[0,1]$). As discussed in the introduction, this broadly defines pretty much any vector. Things are a bit more interesting if one looks at the so-called sparse box-constrained vectors. Such vectors are defined in the following way. Namely, vector $\x$ is called box-constrained $k$ sparse (from this point on we assume being constrained on interval $[0,1]$) if no more than $k$ of its components are not at the edges of the box/interval. In other words, $\x$ is called box-constrained $k$ sparse if no more than $k$ of its components are not equal to zero or one. To solve (\ref{eq:l0}) known to have box-constrained $k$ sparse solution we will employ (\ref{eq:l1bin}) which we conveniently rewrite below
\begin{eqnarray}
\mbox{min} & & \|\x\|_1\nonumber \\
\mbox{subject to} & & A\x=\y \nonumber \\
& & 0\leq \x_i\leq 1, 1\leq i\leq n. \label{eq:boxl1bin}
\end{eqnarray}
As mentioned earlier, we will refer to (\ref{eq:boxl1bin}) as the \emph{box} $\ell_1$ when it is used for finding the box-constrained $k$ sparse solution of (\ref{eq:l0}). Below we will provide a performance analysis of the \emph{box} $\ell_1$. As was the case in Section \ref{sec:binl1} when we discussed the \emph{binary} $\ell_1$, the analysis will focus on the phase transitions (PTs) and the corresponding LDPs. Differently from what we did in Section \ref{sec:binl1} though, here we start things off by focusing on the LDPs first (the PT results will easily follow afterwards).

\subsection{Large deviations}
\label{sec:boxldp}

We will try to follow as closely as possible the derivations for the large deviations of the binary $\ell_1$ that we presented in Section \ref{sec:binl1}. As usual, we will try to skip as many repetitive steps as possible and instead will focus on those that bring key differences. We begin by introducing a theorem that is basically the box $\ell_1$ analogue to the binary $\ell_1$'s Theorem \ref{thm:thmknownsuppcond}. To facilitate the exposition, in addition to assuming that all elements of $\x$ are from $[0,1]$ interval, we will, for any $\mu\in[\frac{1}{2},1]$, without the loss of generality assume that the elements $\x_{1},\x_{2},\dots,\x_{\mu(n-k)}$ of $\x$ are equal to zero and that the elements $\x_{\mu(n-k)+1},\x_{\mu(n-k)+2},\dots,\x_{n-k}$ of $\x$ are equal to one. Minimal modifications of the arguments leading up to Theorem \ref{thm:thmknownsuppcond} produce the following theorem.
\begin{theorem}(\cite{StojnicCSetam09,StojnicICASSP09,StojnicISIT2010binary} Nonzero elements of box-constrained $\x$ have fixed location)
Assume that an $m\times n$ system matrix $A$ is given. Let $\x$
be a box-constrained $k$-sparse vector and let $\mu$ be a real number such that $\mu\in[\frac{1}{2},1]$. Also let each element of $\x$ belong to $[0,1]$ interval and let $\x_1=\x_2=\dots=\x_{\mu(n-k)}=0$ and $\x_{\mu(n-k)+1},\x_{\mu(n-k)+2},\dots,\x_{n-k}=1$. Further, assume that $\y\triangleq A\x$ and that $\w$ is
an $n\times 1$ vector such that $\w_i\geq 0,1\leq i\leq \mu(n-k)$, and $\w_i\leq 0,\mu(n-k)+1\leq i\leq n-k$. If
\begin{equation}
(\forall \w\in \textbf{R}^n | A\w=0) \quad  -\sum_{i=n-k+1}^{n} \w_i<\sum_{i=1}^{n-k}\w_{i},
\end{equation}
then the solutions of (\ref{eq:l0}) and (\ref{eq:boxl1bin}) coincide. Moreover, if
\begin{equation}
(\exists \w\in \textbf{R}^n | A\w=0) \quad  -\sum_{i=n-k+1}^{n} \w_i\geq \sum_{i=1}^{n-k}\w_{i},
\label{eq:boxthmeqgen}
\end{equation}
then the solution of (\ref{eq:l0}) is not the solution of (\ref{eq:boxl1bin}).
\label{thm:boxthmknownsuppcond}
\end{theorem}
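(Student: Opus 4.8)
The plan is to carry over, essentially line for line, the argument behind the binary Theorem \ref{thm:thmknownsuppcond}, exploiting the single structural fact that makes the box-constrained case no harder than the binary one: every vector feasible for (\ref{eq:boxl1bin}) lies in $[0,1]^n$, hence has nonnegative coordinates, so its $\ell_1$ norm equals the plain sum of its entries. In particular $\|\x\|_1=\sum_{i=1}^n\x_i$, and for any competitor $\hatx$ feasible for (\ref{eq:boxl1bin}) we have $\|\hatx\|_1=\sum_{i=1}^n\hatx_i$. Writing $\w=\hatx-\x$, so that $A\w=\0$, the comparison of the objective values therefore collapses to the identity
\[
\|\hatx\|_1-\|\x\|_1=\sum_{i=1}^n\w_i=\sum_{i=1}^{n-k}\w_i+\sum_{i=n-k+1}^{n}\w_i.
\]

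For the coincidence assertion, fix any feasible $\hatx\neq\x$ and set $\w=\hatx-\x\neq\0$ (with $A\w=\0$). The assumed pattern of $\x$ pins the signs of $\w$ on the first $n-k$ coordinates: on $1\leq i\leq\mu(n-k)$ one has $\x_i=0$ and hence $\w_i=\hatx_i\geq0$, while on $\mu(n-k)+1\leq i\leq n-k$ one has $\x_i=1$ and hence $\w_i=\hatx_i-1\leq0$. Thus $\w$ lies in the family over which the hypothesis is quantified, and combining the displayed identity with $-\sum_{i=n-k+1}^{n}\w_i<\sum_{i=1}^{n-k}\w_i$ yields $\|\hatx\|_1-\|\x\|_1=\sum_{i=1}^n\w_i>0$. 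So every feasible point other than $\x$ has strictly larger $\ell_1$ norm, $\x$ is the unique minimizer of (\ref{eq:boxl1bin}), and, being also the solution of (\ref{eq:l0}), it coincides with the latter.

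For the failure assertion, let $\w$ be as in (\ref{eq:boxthmeqgen}): $A\w=\0$, with $\w_i\geq0$ on $1\leq i\leq\mu(n-k)$ and $\w_i\leq0$ on $\mu(n-k)+1\leq i\leq n-k$, and with $-\sum_{i=n-k+1}^{n}\w_i\geq\sum_{i=1}^{n-k}\w_i$, i.e. $\sum_{i=1}^n\w_i\leq0$. Put $\hatx=\x+t\w$ for a sufficiently small $t>0$. Then $A\hatx=\y$ and $\hatx\in[0,1]^n$: the sign conditions push the coordinates pinned at $0$ and at $1$ into the box (or leave them fixed), and the $k$ unconstrained coordinates of $\x$, lying strictly inside $(0,1)$, tolerate a small perturbation in either direction. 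Since $\hatx\neq\x$ and $\|\hatx\|_1-\|\x\|_1=t\sum_{i=1}^n\w_i\leq0$, the vector $\x$ fails to be the unique minimizer of (\ref{eq:boxl1bin}), so the solution of (\ref{eq:l0}) is not recovered by (\ref{eq:boxl1bin}).

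The only step needing genuine care is the sign bookkeeping: one must check that the competitor direction $\w$ really inherits exactly the sign constraints appearing in the hypothesis -- this is precisely where the new splitting of the first $n-k$ coordinates into a ``zero block'' and a ``one block'' enters, replacing the single ``zero block'' of the binary setting -- and, in the failure direction, that the $k$ free coordinates of $\x$ may be taken strictly interior so that a two-sided perturbation is admissible. The degenerate case in which a free coordinate already sits at an edge is a trivial subcase that only strengthens the conclusion, and everything else is the verbatim analogue of the proof of Theorem \ref{thm:thmknownsuppcond}.
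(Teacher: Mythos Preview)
Your argument is correct and is exactly the ``couple of simple modifications'' of the binary proof that the paper invokes: you exploit that feasibility in (\ref{eq:boxl1bin}) forces nonnegativity so that $\|\cdot\|_1$ becomes a plain sum, and then track the sign pattern of $\w=\hatx-\x$ induced by the zero/one blocks, which is precisely the adaptation needed over Theorem~\ref{thm:thmknownsuppcond}. Nothing further is required.
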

\begin{proof}
  Follows by a couple of simple modifications of the arguments leading up to Theorem \ref{thm:thmknownsuppcond}.
\end{proof}
To facilitate the exposition we set
\begin{equation}
\Sw^{(box)}\triangleq\{\w\in S^{n-1}| \quad -\sum_{i=n-k+1}^{n} \w_i<\sum_{i=1}^{n-k}\w_{i},\quad \w_i\geq 0,1\leq i\leq \mu(n-k), \quad \mbox{and} \quad\w_i\leq 0,\mu(n-k)+1\leq i\leq n-k\},\label{eq:boxdefSwpr}
\end{equation}
and as in Section \ref{sec:binl1} (and ultimately \cite{Stojnicl1RegPosasymldp}), we first provide a detailed analysis of the LDPs upper tail (also as in Section \ref{sec:binl1}, a few minimal adaptations of the upper tail analysis automatically settle the lower tail as well).

\subsubsection{Upper tail}
\label{sec:boxuppertail}

We recall that in the upper tail we consider the points $(\alpha,\beta)$ such that $\alpha\geq \alpha_w$ where $\alpha_w$ is the phase transition value for given $\beta$. Assuming that the elements of $A$ are i.i.d. standard normals and following Section \ref{sec:binl1} and \cite{Stojnicl1RegPosasymldp}, we have
\begin{equation}
P^{(box)}_{err}\triangleq P(\min_{\w\in S^{(box)}_w}\|A\w\|_2\leq 0)=P(\max_{\w\in S^{(box)}_w}\min_{\|\y\|_2=1}(\y^T A\w )\geq 0)\leq
\min_{c_3\geq 0} e^{-\frac{c_3^2}{2}}Ee^{-c_3\|\g\|_2}Ee^{c_3w(\h,S^{(box)}_w)},
\label{eq:boxldpprob3}
\end{equation}
where $P^{(box)}_{err}$ is the so-called probability of error/failure, i.e. the probability that (\ref{eq:boxl1bin}) fails to produce the solution of (\ref{eq:l0}) and
\begin{eqnarray}
w(\h,\Sw^{(box)})\triangleq\max_{\w\in \Sw^{(box)}} (\h^T\w) = \max_{\bar{\y}\in \mR^{n}} & &  \sum_{i=1}^{n} \h_i \bar{\y}_i\nonumber \\
\mbox{subject to} &  & \bar{\y}_i\geq 0, 0\leq i\leq n-k\nonumber \\
& & \sum_{i=n-k+1}^{n}\bar{\y}_i\geq \sum_{i=1}^{\mu(n-k)} \bar{\y}_i -\sum_{i=\mu(n-k)+1}^{n-k} \bar{\y}_i\nonumber \\
& & \sum_{i=1}^{n}\bar{\y}_i^2\leq 1.\label{eq:boxworkww2}
\end{eqnarray}
We of course recall that the elements of $\h$ are again the i.i.d. standard normals. As in \cite{StojnicCSetam09,StojnicCSetamBlock09,StojnicBlockasymldpfinn15,StojnicLiftStrSec13,StojnicICASSP10knownsupp,StojnicTowBettCompSens13,Stojnicl1RegPosasymldp,StojnicISIT2010binary} one writes
\begin{eqnarray}
w(\h,\Sw^{(box)}) = -\max_{\nu\geq 0,\gamma\geq 0}\min_{\bar{\y}} & & \sum_{i=1}^{n} -\h_i \bar{\y}_i+\nu\sum_{i=1}^{\mu(n-k)}\bar{\y}_i
-\nu\sum_{i=\mu(n-k)+1}^{n-k}\bar{\y}_i
-\nu\sum_{i=n-k+1}^{n}\bar{\y}_i+\gamma\sum_{i=1}^{n}\bar{\y}_i^2-\gamma\nonumber \\
\mbox{subject to} & & \bar{\y}_i\geq 0, 0\leq i\leq n-k,\label{eq:boxldpwhSw0}
\end{eqnarray}
and finally
\begin{eqnarray}
w(\h,\Sw^{(box)}) & = & \min_{\nu\geq0,\gamma\geq 0} \frac{\sum_{i=1}^{\mu(n-k)}\max(\h_i-\nu,0)^2+\sum_{i=\mu(n-k)+1}^{n-k}\max(\h_i+\nu,0)^2+\sum_{i=n-k+1}^{n}(\h_i+\nu)^2}{4\gamma}+\gamma\nonumber \\
& = & \min_{\nu\geq0}\sqrt{\sum_{i=1}^{\mu(n-k)}\max(\h_i-\nu,0)^2+\sum_{i=\mu(n-k)+1}^{n-k}\max(\h_i+\nu,0)^2+\sum_{i=n-k+1}^{n}(\h_i+\nu)^2}.\label{eq:boxldpwhSw}
\end{eqnarray}
The following theorem summarizes the above methodology to upper bound $P^{(box)}_{err}$.
\begin{theorem}
Let $A$ be an $m\times n$ matrix in (\ref{eq:l0})
with i.i.d. standard normal components. Let $\mu$ be a real number such that $\mu\in [\frac{1}{2},1]$. Further, let
the unknown $\x$ in (\ref{eq:l0}) be box-constrained $k$-sparse and let the locations of the elements of $\x$ from $(0,1)$ be arbitrarily chosen but fixed. Let $P^{(box)}_{err}$ be the probability that the solution of (\ref{eq:boxl1bin}) is not the solution of (\ref{eq:l0}). Then
\begin{eqnarray}
P^{(box)}_{err} & \leq & \min_{c_3\geq 0}e^{-\frac{c_3^2}{2}}e^{-c_3\|\g\|_2}Ee^{c_3w(\h,S^{(box)}_w)} \nonumber \\
& = & \min_{c_3\geq 0}\left (e^{-\frac{c_3^2}{2}}\frac{1}{\sqrt{2\pi}^m}\int_{\g}e^{-\sum_{i=1}^{m}\g_i^2/2-c_3\|\g\|_2}d\g \min_{\nu\geq 0,\gamma\geq\frac{c_3}{2}} w_1^{\mu(n-k)}w_2^{(1-\mu)(n-k)}w_3^{k}e^{c_3\gamma}\right ),
\label{eq:boxldpthm1perrub1}
\end{eqnarray}
where
\begin{eqnarray}
w_1 &=& \frac{1}{\sqrt{2\pi}}\int_{h}e^{-h^2/2}e^{c_3\max(h-\nu,0)^2/4/\gamma}dh
  =\frac{1}{2}\lp\frac{e^{\frac{c_3\nu^2/4/\gamma}{1-c_3/2/\gamma}}}{\sqrt{1-c_3/2/\gamma}}\erfc\left (\frac{\nu}{\sqrt{2}\sqrt{1-c_3/2/\gamma}}\right )+\erf\left (\frac{\nu}{\sqrt{2}}\right )+1\rp\nonumber \\
w_2 &=& \frac{1}{\sqrt{2\pi}}\int_{h}e^{-h^2/2}e^{c_3\max(h+\nu,0)^2/4/\gamma}dh
  =\frac{1}{2}\lp\frac{e^{\frac{c_3\nu^2/4/\gamma}{1-c_3/2/\gamma}}}{\sqrt{1-c_3/2/\gamma}}\erfc\left (\frac{-\nu}{\sqrt{2}\sqrt{1-c_3/2/\gamma}}\right )+\erf\left (\frac{-\nu}{\sqrt{2}}\right )+1\rp\nonumber \\
w_3 &=&\frac{1}{\sqrt{2\pi}}\int_{h}e^{-h^2/2}e^{c_3(h+\nu)^2/4/\gamma}dh= \frac{e^{\frac{c_3\nu^2/4/\gamma}{1-c_3/2/\gamma}}}{\sqrt{1-c_3/2/\gamma}}.
\label{eq:boxldpthm1perrub2}
\end{eqnarray}\label{thm:boxldp1}
\end{theorem}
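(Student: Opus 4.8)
The plan is to follow the same route that produced Theorem \ref{thm:ldp1} in the binary case, the only genuinely new ingredient being the one–dimensional Gaussian integral $w_3$ that encodes the $k$ non-edge coordinates of the box-constrained $\x$. First I would invoke Theorem \ref{thm:boxthmknownsuppcond}: recovery fails exactly when the null space of $A$ contains a direction $\w$ violating $-\sum_{i=n-k+1}^{n}\w_i<\sum_{i=1}^{n-k}\w_i$ with the prescribed sign pattern, so that $P^{(box)}_{err}$ is bounded by the probability that $A\w=0$ has a solution on the spherical set $\Sw^{(box)}$ of \eqref{eq:boxdefSwpr}. Writing this event as $\{\min_{\w\in\Sw^{(box)}}\|A\w\|_2\leq 0\}=\{\max_{\w\in\Sw^{(box)}}\min_{\|\y\|_2=1}\y^T A\w\geq 0\}$ and applying Gordon's Gaussian comparison inequality (exactly as in the derivation of \eqref{eq:ldpprob3} and in \cite{StojnicCSetam09,StojnicICASSP09,Stojnicl1RegPosasymldp}) gives, for every $c_3\geq 0$,
\[
P^{(box)}_{err}\leq e^{-c_3^2/2}\,Ee^{-c_3\|\g\|_2}\,Ee^{c_3 w(\h,\Sw^{(box)})},
\]
with $\g\in\mR^m$, $\h\in\mR^n$ having i.i.d. standard normal entries and $w(\h,\Sw^{(box)})$ the linear program \eqref{eq:boxworkww2}.

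Next I would evaluate $w(\h,\Sw^{(box)})$ by Lagrangian duality, introducing a multiplier $\nu\geq 0$ for the balance constraint and $\gamma\geq 0$ for the sphere, and then minimizing the resulting separable quadratic over $\bar{\y}$ coordinate by coordinate, which yields \eqref{eq:boxldpwhSw0}--\eqref{eq:boxldpwhSw}. The point where the box case departs from the binary one is that the three blocks of coordinates now behave differently: the $\mu(n-k)$ zero coordinates, with $\bar{\y}_i\geq 0$, contribute $\max(\h_i-\nu,0)^2$; the $(1-\mu)(n-k)$ lower-edge coordinates, also sign-constrained, contribute $\max(\h_i+\nu,0)^2$; whereas the $k$ non-edge coordinates, for which $\bar{\y}_i$ is \emph{not} sign-constrained, contribute the full square $(\h_i+\nu)^2$ with no truncation. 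Since $w(\h,\Sw^{(box)})$ is itself a minimum over $\nu$ and $\gamma$, for any fixed admissible pair we may bound $Ee^{c_3 w}$ above by $Ee^{c_3(\cdots)}$ and only afterwards pull the minimum outside the expectation; the constraint $\gamma\geq c_3/2$ is precisely what makes the ensuing Gaussian integrals convergent. Independence of the entries of $\h$ then factors the expectation as $w_1^{\mu(n-k)}w_2^{(1-\mu)(n-k)}w_3^{k}e^{c_3\gamma}$, with $w_1,w_2$ the same scalar integrals as in Theorem \ref{thm:ldp1} and $w_3=\frac{1}{\sqrt{2\pi}}\int_h e^{-h^2/2}e^{c_3(h+\nu)^2/4/\gamma}dh$.

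Finally I would compute the three integrals by completing the square. For $w_1$ and $w_2$ one splits the line at $\mp\nu$ and recovers the $\erfc/\erf$ expressions of \eqref{eq:boxldpthm1perrub2} verbatim from the binary analysis, while $w_3$, being an integral over all of $\mR$ with no truncation, collapses to the clean closed form $w_3=\frac{e^{c_3\nu^2/4/\gamma/(1-c_3/2/\gamma)}}{\sqrt{1-c_3/2/\gamma}}$. Assembling these pieces, together with the fixed Gaussian factor $\frac{1}{\sqrt{2\pi}^m}\int_\g e^{-\sum_i \g_i^2/2-c_3\|\g\|_2}d\g$ coming from $Ee^{-c_3\|\g\|_2}$, gives exactly \eqref{eq:boxldpthm1perrub1}--\eqref{eq:boxldpthm1perrub2}; the bound holds for any integers $k\leq m\leq n$.

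The main obstacle is not computational but the justification of the two exchange steps — the Gordon comparison step applied to this particular sign-constrained set $\Sw^{(box)}$, and the interchange of $\min_{\nu,\gamma}$ with the expectation. Both, however, are handled by the identical arguments already established in \cite{StojnicCSetam09,StojnicICASSP09,StojnicISIT2010binary,Stojnicl1RegPosasymldp}, so once the correct partition of the coordinates and the integral $w_3$ are identified, the extension from Theorem \ref{thm:ldp1} is essentially mechanical.
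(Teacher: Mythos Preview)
Your proposal is correct and follows essentially the same route as the paper: invoke Theorem~\ref{thm:boxthmknownsuppcond}, apply the Gordon-type comparison to reach \eqref{eq:boxldpprob3}, evaluate $w(\h,\Sw^{(box)})$ via Lagrangian duality to get the three-block expression \eqref{eq:boxldpwhSw}, and then factor the expectation into $w_1^{\mu(n-k)}w_2^{(1-\mu)(n-k)}w_3^{k}$ using independence. The only slip is terminological---the $(1-\mu)(n-k)$ coordinates you call ``lower-edge'' are actually at the \emph{upper} edge $\x_i=1$ of the box---but your identification of the sign constraints and the resulting $\max(\h_i+\nu,0)^2$ contribution is correct, so the argument goes through unchanged.
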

\begin{proof}
Follows from the above considerations, what was presented in Section \ref{sec:binl1}, and ultimately through the mechanisms developed in \cite{StojnicCSetam09,StojnicCSetamBlock09,StojnicBlockasymldpfinn15,StojnicLiftStrSec13,StojnicICASSP10knownsupp,StojnicTowBettCompSens13,Stojnicl1RegPosasymldp,StojnicISIT2010binary}.
\end{proof}
The above upper-bounding strategy works for any allowable integers $m$, $k$, and $n$. Introducing for the decay rate of $P^{(box)}_{err}$, $I^{(box)}_{err}(\alpha,\beta)$,
\begin{equation}\label{eq:boxldpasymp1}
  I^{(box)}_{err}(\alpha,\beta)\triangleq\lim_{n\rightarrow\infty}\frac{\log{P^{(box)}_{err}}}{n}.
\end{equation}
we have, based on Theorem \ref{thm:boxldp1}, the following LDP type of theorem.
\begin{theorem}
Assume the setup of Theorem \ref{thm:boxldp1}. Further, let integers $m$, $k$, and $n$ be large ($k\leq m\leq n$) such that $\beta=\frac{k}{n}$ and $\alpha=\frac{m}{n}$ are constants independent of $n$. Assume that a pair $(\alpha,\beta)$  is given. Also, assume the following scaling: $c_3\rightarrow c_3\sqrt{n}$ and $\gamma\rightarrow\gamma\sqrt{n}$. Then
\begin{eqnarray}
I^{(box)}_{err}(\alpha,\beta) & \triangleq &\lim_{n\rightarrow\infty}\frac{\log{P^{(box)}_{err}}}{n}\nonumber \\
& \leq & \min_{c_3\geq 0}\left (-\frac{(c_3)^2}{2}+I_{sph}+\min_{\nu\geq 0,\gamma\geq \frac{c_3}{2}} (\mu(1-\beta)\log{w_1}
+(1-\mu)(1-\beta)\log{w_2}+\beta \log{w_3}+c_3\gamma)\right ) \nonumber \\
& \triangleq & I_{err,u}^{(box,ub)}(\alpha,\beta),\nonumber \\
\label{eq:boxldpthm2Ierrub1}
\end{eqnarray}
where
\begin{eqnarray}
I_{sph} &=& \widehat{\gamma}c_3-\frac{\alpha }{2}\log\left (1-\frac{c_3}{2\widehat{\gamma}}\right )\nonumber \\
  \widehat{\gamma} &=& \frac{c_3-\sqrt{(c_3)^2+4\alpha}}{4}\nonumber \\
w_1 &=& \frac{1}{2}\lp\frac{e^{\frac{c_3\nu^2/4/\gamma}{1-c_3/2/\gamma}}}{\sqrt{1-c_3/2/\gamma}}\erfc\left (\frac{\nu}{\sqrt{2}\sqrt{1-c_3/2/\gamma}}\right )+\erf\left (\frac{\nu}{\sqrt{2}}\right )+1\rp\nonumber \\
w_2 &=& \frac{1}{2}\lp\frac{e^{\frac{c_3\nu^2/4/\gamma}{1-c_3/2/\gamma}}}{\sqrt{1-c_3/2/\gamma}}\erfc\left (\frac{-\nu}{\sqrt{2}\sqrt{1-c_3/2/\gamma}}\right )+\erf\left (\frac{-\nu}{\sqrt{2}}\right )+1\rp.\nonumber \\
w_3 &=& \frac{e^{\frac{c_3\nu^2/4/\gamma}{1-c_3/2/\gamma}}}{\sqrt{1-c_3/2/\gamma}}.
\label{eq:boxldpthm2perrub2}
\end{eqnarray}\label{thm:boxldp2}
\end{theorem}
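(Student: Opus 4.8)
The plan is to read off the exponential decay rate directly from the finite-$n$ bound of Theorem~\ref{thm:boxldp1}, in exactly the same way that the binary analogue (Theorem~\ref{thm:ldp2}) is obtained from Theorem~\ref{thm:ldp1}. Starting from the right-hand side of (\ref{eq:boxldpthm1perrub1}), I would first substitute the announced scaling $c_3\rightarrow c_3\sqrt{n}$, $\gamma\rightarrow\gamma\sqrt{n}$ and observe that it leaves invariant every ``shape'' quantity appearing in $w_1,w_2,w_3$ — the ratio $c_3/(2\gamma)$ and the product $c_3\nu^2/(4\gamma)$ — so that $w_1,w_2,w_3$ remain $O(1)$ and coincide with the expressions in (\ref{eq:boxldpthm2perrub2}), while the constraint $\gamma\geq c_3/2$ is preserved; at the same time $e^{-c_3^2/2}$ becomes $e^{-nc_3^2/2}$ and $e^{c_3\gamma}$ becomes $e^{nc_3\gamma}$, contributing $-c_3^2/2$ and $+c_3\gamma$ to the rate. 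Writing $n-k=(1-\beta)n$ and $k=\beta n$, the factor $w_1^{\mu(n-k)}w_2^{(1-\mu)(n-k)}w_3^{k}$ contributes $\mu(1-\beta)\log w_1+(1-\mu)(1-\beta)\log w_2+\beta\log w_3$ to $\frac{1}{n}\log(\cdot)$.

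The only term requiring genuine work is the Gaussian integral $\frac{1}{\sqrt{2\pi}^{m}}\int_{\g}e^{-\sum_{i=1}^{m}\g_i^2/2-c_3\sqrt{n}\|\g\|_2}\,d\g$ with $m=\alpha n$. Passing to the radial variable $r=\|\g\|_2$, rescaling $r=t\sqrt{n}$, and applying Laplace's method to the one-dimensional exponent $\alpha\log t-t^2/2-c_3t$ (whose stationary point is $t^\star=-2\widehat{\gamma}$ with $\widehat{\gamma}=\frac{c_3-\sqrt{c_3^2+4\alpha}}{4}$), together with Stirling's approximation for the normalizing constant, shows that $\frac{1}{n}\log$ of this integral converges to $I_{sph}=\widehat{\gamma}c_3-\frac{\alpha}{2}\log(1-\frac{c_3}{2\widehat{\gamma}})$. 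This is precisely the spherical-integral computation already performed in \cite{StojnicCSetam09,Stojnicl1RegPosasymldp} and reused in the binary case, so I would simply invoke it rather than redo it.

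Putting the pieces together, for every fixed admissible triple $(c_3,\nu,\gamma)$ the bound (\ref{eq:boxldpthm1perrub1}) gives $\frac{1}{n}\log P^{(box)}_{err}\leq -\frac{c_3^2}{2}+I_{sph}+\mu(1-\beta)\log w_1+(1-\mu)(1-\beta)\log w_2+\beta\log w_3+c_3\gamma+o(1)$. Sending $n\rightarrow\infty$ first and then taking the infimum over $c_3\geq 0$ and $\nu\geq 0,\gamma\geq c_3/2$ yields the claimed inequality, with the right-hand side equal to $I_{err,u}^{(box,ub)}(\alpha,\beta)$; no interchange of limit and minimum is needed, since the parameters are chosen before the limit is taken. (The existence of the limit defining $I^{(box)}_{err}$, as opposed to just the $\limsup$, is settled once the matching lower-tail analysis is in place, exactly as in the binary case and in \cite{Stojnicl1RegPosasymldp}.)

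The only mildly delicate point is the radial Laplace asymptotics for the spherical term — in particular verifying that $\widehat{\gamma}$ stays in the region where $1-\frac{c_3}{2\widehat{\gamma}}>0$ for all $c_3\geq 0$, so that $I_{sph}$ is well defined and the expansion is legitimate; since $\widehat{\gamma}<0$ one always has $1-\frac{c_3}{2\widehat{\gamma}}\geq 1$, so this is routine and identical to the corresponding step in \cite{Stojnicl1RegPosasymldp}. This is why, as in the binary case, the proof amounts to an essentially mechanical transcription of that argument, with the bookkeeping of the three groups of coordinates ($\mu(n-k)$ zero entries, $(1-\mu)(n-k)$ one entries, and $k$ ``free'' entries) carried through.
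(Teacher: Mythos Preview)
Your proposal is correct and follows essentially the same approach as the paper, which simply states that the result ``follows in a fashion analogous to the one employed in \cite{Stojnicl1RegPosasymldp}'' (mirroring the binary case of Theorem~\ref{thm:ldp2}). You have in fact spelled out more of the mechanics --- the scaling invariance of $w_1,w_2,w_3$, the Laplace evaluation of the spherical integral, and the bookkeeping of the three coordinate groups --- than the paper's own one-line proof does.
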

\begin{proof} As in the case of Theorem \ref{thm:ldp2} in Section \ref{sec:binl1}, follows in a fashion analogous to the one employed in \cite{Stojnicl1RegPosasymldp}.
\end{proof}
Below we will provide an explicit solution to the above optimization problem. We will follow the methodology of Section \ref{sec:binl1} as closely as possible. However, there will be quite a few differences and we will try to emphasize them.

\subsubsection{Determining $I_{err,u}^{(box,ub)}$}
\label{sec:boxanalysisIerr}

As in Section \ref{sec:binl1}  we start by setting
\begin{equation}\label{eq:boxdetanalIerr1}
  A_{0}\triangleq\sqrt{1-\frac{c_3}{2\gamma}},
\end{equation}
and then note that optimization problem from the above theorem can be rewritten as
\begin{equation}\label{eq:boxdetanalIeer2}
I_{err,u}^{(box,ub)}(\alpha,\beta)\triangleq \min_{c_3\geq 0,\nu\geq 0,A_0\leq 1}\zeta^{(box)}_{\alpha,\beta}(c_3,\nu,A_0),
\end{equation}
where
\begin{eqnarray}
\zeta^{(box)}_{\alpha,\beta}(c_3,\nu,A_0)&=&\left (-\frac{c_3^2}{2}+I_{sph}+\mu(1-\beta)\log{w_1}+(1-\mu)(1-\beta)\log{w_2}+\beta\log{w_3}+\frac{c_3^2}{2(1-A_0^2)}\right )\nonumber \\
I_{sph} &=& \widehat{\gamma}c_3-\frac{\alpha }{2}\log\left (1-\frac{c_3}{2\widehat{\gamma}}\right )\nonumber \\
  \widehat{\gamma} &=& \frac{c_3-\sqrt{(c_3)^2+4\alpha}}{4}\nonumber \\
w_1 &=& \frac{1}{2}\lp
  \frac{e^{\frac{(1-A_0^2)\nu^2}{2A_0^2}}}{A_0}\erfc\left (\frac{\nu}{\sqrt{2}A_0}\right )+\erf\left (\frac{\nu}{\sqrt{2}}\right )+1\rp\nonumber \\
w_2 &=& \frac{1}{2}\lp
  \frac{e^{\frac{(1-A_0^2)\nu^2}{2A_0^2}}}{A_0}\erfc\left (\frac{-\nu}{\sqrt{2}A_0}\right )+\erf\left (\frac{-\nu}{\sqrt{2}}\right )+1\rp \nonumber \\
 w_3 &=& \frac{e^{\frac{c_3\nu^2/4/\gamma}{1-c_3/2/\gamma}}}{\sqrt{1-c_3/2/\gamma}}.
\label{eq:boxdetanalIeer3}
\end{eqnarray}
As in Section \ref{sec:binl1}, to find an optimum in (\ref{eq:boxdetanalIeer2}) we will compute the derivatives of $\zeta^{(box)}_{\alpha,\beta}(c_3,\nu,A_0)$ with respect to $c_3$, $\nu$, and $A_0$ and solve the following system of three equations
\begin{equation}\label{eq:boxdetanalIeer3a}
  \frac{d \zeta^{(box)}_{\alpha,\beta}(c_3,\nu,A_0)}{dc_3}=  \frac{d \zeta^{(box)}_{\alpha,\beta}(c_3,\nu,A_0)}{d\nu}=  \frac{d \zeta^{(box)}_{\alpha,\beta}(c_3,\nu,A_0)}{dA_0}=0.
\end{equation}
We start by recognizing that the considerations related to the derivative with respect to $c_3$ are the same as in Section \ref{sec:binl1}. Namely,
\begin{equation}
\frac{d\zeta^{(box)}_{\alpha,\beta}(c_3,\nu,A_0)}{dc_3}=-c_3+\frac{c_3}{1-A_0^2}+\frac{c_3-\sqrt{(c_3)^2+4\alpha}}{2},
\label{eq:boxdetanalIeer9}
\end{equation}
and after setting further the above derivative to zero one has
\begin{equation}\label{eq:boxdetanalIeer12}
  c_3=\frac{(1-A_0^2)\sqrt{\alpha}}{A_0}.
\end{equation}
The derivatives with respect to $\nu$ and $A_0$ are different and a bit more involved. For the derivative with respect to $\nu$ we have
\begin{eqnarray}
\frac{d\zeta^{(box)}_{\alpha,\beta}(c_3,\nu,A_0)}{d\nu}&=&\frac{d}{d\nu}\left (-\frac{c_3^2}{2}+I_{sph}+\mu(1-\beta)\log{w_1}+(1-\mu)(1-\beta)\log{w_2}+\beta\log{w_3}+\frac{c_3^2}{2(1-A_0^2)}\right )\nonumber \\
&=& \frac{\mu(1-\beta)}{w_1}\left (\frac{(1-A_0^2)\nu}{A_0^2}\frac{e^{\frac{(1-A_0^2)\nu^2}{2A_0^2}}}{A_0}\erfc\left (\frac{\nu}{\sqrt{2}A_0}\right )-\frac{1-A_0^2}{A_0^2}\frac{2e^{-\frac{\nu^2}{2}}}{\sqrt{2}\sqrt{\pi}}\right)\nonumber \\
&& -\frac{(1-\mu)(1-\beta)}{w_2}\left (-\frac{(1-A_0^2)\nu}{A_0^2}\frac{e^{\frac{(1-A_0^2)\nu^2}{2A_0^2}}}{A_0}\erfc\left (\frac{-\nu}{\sqrt{2}A_0}\right )-\frac{1-A_0^2}{A_0^2}\frac{2e^{-\frac{\nu^2}{2}}}{\sqrt{2}\sqrt{\pi}}\right )+\frac{1-A_0^2}{A_0^2}\beta\nu.\nonumber \\
\label{eq:boxdetanalIeer4}
\end{eqnarray}
To facilitate the exposition we recall on the following from (\ref{eq:detanalIeer4a})
\begin{eqnarray}
y_1 & = & \frac{\nu}{\sqrt{2}}\nonumber \\
y_2 & = & \frac{\nu}{\sqrt{2}A_0}=\frac{y_1}{A_0}.\label{eq:boxdetanalIeer4a}
\end{eqnarray}
Similarly to (\ref{eq:boxdetanalIeer4b}) we also set
\begin{eqnarray}
z^{(box)}_{1,\nu} & = & \frac{\mu(1-\beta)y_1\lp\sqrt{2}y_2e^{y_2^2}\erfc(y_2)-\sqrt{\frac{2}{\pi}}\rp}{y_2e^{y_2^2}\erfc(y_2)+y_1e^{y_1^2}\erfc(-y_1)}\nonumber \\
z^{(box)}_{2,\nu} & = & -\frac{(1-\mu)(1-\beta) y_1\lp -\sqrt{2}y_2e^{y_2^2}\erfc(-y_2)-\sqrt{\frac{2}{\pi}}\rp}{y_2e^{y_2^2}\erfc(-y_2)+y_1e^{y_1^2}\erfc(y_1)}.\label{eq:boxdetanalIeer4b}
\end{eqnarray}
Combining (\ref{eq:boxdetanalIeer3}), (\ref{eq:boxdetanalIeer4}), (\ref{eq:boxdetanalIeer4a}), and (\ref{eq:boxdetanalIeer4b}) we obtain the following analogous version of (\ref{eq:detanalIeer4c})
\begin{eqnarray}
\frac{d\zeta^{(box)}_{\alpha,\beta}(c_3,\nu,A_0)}{d\nu}
& = & \frac{1-A_0^2}{A_0^2}\lp z^{(box)}_{1,\nu}+z^{(box)}_{2,\nu}+\beta\sqrt{2}y_1 \rp.
\label{eq:boxdetanalIeer4c}
\end{eqnarray}
Together, (\ref{eq:boxdetanalIeer4b}) and (\ref{eq:boxdetanalIeer4c}), then also give
\begin{eqnarray}
& & \frac{d\zeta^{(box)}_{\alpha,\beta}(c_3,\nu,A_0)}{d\nu}
 =  \frac{1-A_0^2}{A_0^2}\lp z^{(box)}_{1,\nu}+z^{(box)}_{2,\nu}+\beta\sqrt{2}y_1 \rp=0\nonumber \\
& \Longrightarrow & \frac{\mu(1-\beta)y_1\lp\sqrt{2}y_2e^{y_2^2}\erfc(y_2)-\sqrt{\frac{2}{\pi}}\rp}{y_2e^{y_2^2}\erfc(y_2)+y_1e^{y_1^2}\erfc(-y_1)}-\frac{(1-\mu)(1-\beta) y_1\lp -\sqrt{2}y_2e^{y_2^2}\erfc(-y_2)-\sqrt{\frac{2}{\pi}}\rp}{y_2e^{y_2^2}\erfc(-y_2)+y_1e^{y_1^2}\erfc(y_1)}+\beta\sqrt{2}y_1=0.\nonumber \\
\label{eq:boxdetanalIeer4d}
\end{eqnarray}
To continue further transformation of the above derivative we will also rely on the following derivative with respect to $A_0$
\begin{eqnarray}
\frac{d\zeta^{(box)}_{\alpha,\beta}(c_3,\nu,A_0)}{dA_0}&=&\frac{d}{dA_0}\left (-\frac{c_3^2}{2}+I_{sph}+\mu(1-\beta)\log{w_1}+(1-\mu)(1-\beta)\log{w_2}+\beta\log{w_3}+\frac{c_3^2}{2(1-A_0^2)}\right )\nonumber \\
&=& \mu(1-\beta)\frac{d\log{w_1}}{dA_0}+(1-\mu)(1-\beta)\frac{d\log{w_2}}{dA_0}+\beta\frac{d\log{w_3}}{dA_0}+\frac{c_3^2A_0}{(1-A_0^2)^2}\nonumber \\
&=& \mu(1-\beta)\frac{d\log{w_1}}{dA_0}+(1-\mu)(1-\beta)\frac{d\log{w_2}}{dA_0}+\beta\frac{-\nu^2-A_0^2}{A_0^3}+\frac{\alpha}{A_0}.\nonumber \\
\label{eq:boxdetanalIeer10}
\end{eqnarray}
In (\ref{eq:detanalIeer11a}) and (\ref{eq:detanalIeer11b}) we have already determined $\frac{d\log{w_1}}{dA_0}$ and $\frac{d\log{w_2}}{dA_0}$ as
\begin{eqnarray}
\frac{d\log{w_1}}{dA_0} & = &
-\frac{e^{\frac{\nu^2}{2A_0^2}}(A_0^2+\nu^2)\erfc(\frac{\nu}{\sqrt{2}A_0})-\sqrt{\frac{2}{\pi}}A_0\nu}
{A_0^3(e^{\frac{\nu^2}{2A_0^2}}\erfc(\frac{\nu}{\sqrt{2}A_0})+A_0e^{\frac{\nu^2}{2}}(\erf(\frac{\nu}{\sqrt{2}})+1))}.\nonumber \\
\frac{d\log{w_2}}{dA_0} & = & -\frac{e^{\frac{\nu^2}{2A_0^2}}(A_0^2+\nu^2)\erfc(\frac{-\nu}{\sqrt{2}A_0})+\sqrt{\frac{2}{\pi}}A_0\nu}
{A_0^3(e^{\frac{\nu^2}{2A_0^2}}\erfc(\frac{-\nu}{\sqrt{2}A_0})+A_0e^{\frac{\nu^2}{2}}(\erf(\frac{-\nu}{\sqrt{2}})+1))}.\nonumber \\
\label{eq:boxdetanalIeer11a}
\end{eqnarray}
Following closely (\ref{eq:boxdetanalIeer4b}) we set
\begin{eqnarray}
z^{(box)}_{1,A_0} & = &
-\mu(1-\beta)\frac{y_2^2}{y_1}\frac{\lp(1+2y_2^2)e^{y_2^2}\erfc(y_2)-\sqrt{2}y_2\sqrt{\frac{2}{\pi}}\rp}{(y_2e^{y_2^2}\erfc(y_2)+y_1 e^{y_1^2}\erfc(-y_1))}
\nonumber \\
z^{(box)}_{2,A_0} & = & -(1-\mu)(1-\beta)\frac{y_2^2}{y_1}\frac{\lp (1+2y_2^2)e^{y_2^2}\erfc(-y_2)+\sqrt{2}y_2\sqrt{\frac{2}{\pi}}\rp}{y_2e^{y_2^2}\erfc(-y_2)+y_1e^{y_1^2}\erfc(y_1)}.\label{eq:boxdetanalIeer11c}
\end{eqnarray}
A combination of (\ref{eq:boxdetanalIeer4}), (\ref{eq:boxdetanalIeer10}), (\ref{eq:boxdetanalIeer11a}), and (\ref{eq:boxdetanalIeer11c}) gives
\begin{eqnarray}
\frac{d\zeta^{(box)}_{\alpha,\beta}(c_3,\nu,A_0)}{dA_0}
& = & \mu(1-\beta)\frac{d\log{w_1}}{dA_0}+(1-\mu)(1-\beta)\frac{d\log{w_2}}{dA_0}
+\beta\frac{-\nu^2-A_0^2}{A_0^3}+\frac{\alpha}{A_0}\nonumber \\
& = & z^{(box)}_{1,A_0}+z^{(box)}_{2,A_0}+\frac{(\alpha-\beta-2y_2^2\beta) y_2}{y_1}.\nonumber \\
\label{eq:boxdetanalIeer11d}
\end{eqnarray}
Setting the derivative in (\ref{eq:boxdetanalIeer11d}) to zero and utilizing (\ref{eq:boxdetanalIeer11c}) we also have that
\begin{eqnarray}
\frac{d\zeta^{(box)}_{\alpha,\beta}(c_3,\nu,A_0)}{dA_0}
 =  z^{(box)}_{1,A_0}+z^{(box)}_{2,A_0}+\frac{(\alpha-\beta-2y_2^2\beta) y_2}{y_1}=0.
\label{eq:boxdetanalIeer11ea}
\end{eqnarray}
implies
\begin{multline} -\mu(1-\beta)y_2\frac{\lp(1+2y_2^2)e^{y_2^2}\erfc(y_2)-\sqrt{2}y_2\sqrt{\frac{2}{\pi}}\rp}{y_2e^{y_2^2}\erfc(y_2)+y_1 e^{y_1^2}\erfc(-y_1)}  -  (1-\mu)(1-\beta) y_2\frac{\lp (1+2y_2^2)e^{y_2^2}\erfc(-y_2)+\sqrt{2}y_2\sqrt{\frac{2}{\pi}}\rp}{y_2e^{y_2^2}\erfc(-y_2)+y_1e^{y_1^2}\erfc(y_1)} \\
+
\alpha-\beta-2y_2^2\beta
=0.
\label{eq:boxdetanalIeer11e}
\end{multline}
Combining further (\ref{eq:boxdetanalIeer4d}) and (\ref{eq:boxdetanalIeer11e}) we obtain
\begin{eqnarray}
& & \frac{d\zeta^{(box)}_{\alpha,\beta}(c_3,\nu,A_0)}{dA_0}
 =  z^{(box)}_{1,A_0}+z^{(box)}_{2,A_0}+\frac{(\alpha-\beta-2y_2^2\beta) y_2}{y_1}=0\nonumber \\
& \Longrightarrow &  -\frac{\mu(1-\beta)y_2 e^{y_2^2}\erfc(y_2)}{y_2e^{y_2^2}\erfc(y_2)+y_1 e^{y_1^2}\erfc(-y_1)}
-\frac{(1-\mu)(1-\beta) y_2 e^{y_2^2}\erfc(-y_2)}{y_2e^{y_2^2}\erfc(-y_2)+y_1e^{y_1^2}\erfc(y_1)}+
\alpha-\beta-2\beta y_2^2+2\beta y_2^2
=0\nonumber \\
& \Longrightarrow &  -\frac{\mu(1-\beta)\sqrt{\frac{1}{\pi}}}{y_2e^{y_2^2}\erfc(y_2)+y_1 e^{y_1^2}\erfc(-y_1)}
+ \frac{(1-\mu)(1-\beta)\sqrt{\frac{1}{\pi}}}{y_2e^{y_2^2}\erfc(-y_2)+y_1e^{y_1^2}\erfc(y_1)}+
\alpha
=0\nonumber \\
& \Longrightarrow &  -\frac{\mu(1-\beta)}{y_2e^{y_2^2}\erfc(y_2)+y_1 e^{y_1^2}\erfc(-y_1)}
+ \frac{(1-\mu)(1-\beta)}{y_2e^{y_2^2}\erfc(-y_2)+y_1e^{y_1^2}\erfc(y_1)}+
\alpha\sqrt{\pi}
=0.\nonumber \\
\label{eq:boxdetanalIeer11f}
\end{eqnarray}
Additionally, a simple algebraic transformation of (\ref{eq:boxdetanalIeer4d}) gives
\begin{eqnarray}
\frac{(1-\mu)(1-\beta)}{y_2e^{y_2^2}\erfc(-y_2)+y_1e^{y_1^2}\erfc(y_1)} & = &
\frac{\mu(1-\beta)}{(y_2e^{y_2^2}\erfc(y_2)+y_1e^{y_1^2}\erfc(-y_1))}
\frac{\lp\sqrt{2}y_2e^{y_2^2}\erfc(y_2)-\sqrt{\frac{2}{\pi}}\rp}{\lp -\sqrt{2}y_2e^{y_2^2}\erfc(-y_2)-\sqrt{\frac{2}{\pi}}\rp}\nonumber \\
&&+\frac{\beta\sqrt{2}}{\lp -\sqrt{2}y_2e^{y_2^2}\erfc(-y_2)-\sqrt{\frac{2}{\pi}}\rp}.
\label{eq:boxdetanalIeer11g}
\end{eqnarray}
After plugging the right side of (\ref{eq:boxdetanalIeer11g}) in (\ref{eq:boxdetanalIeer11f}) we obtain
\begin{eqnarray}
& & \frac{d\zeta^{(box)}_{\alpha,\beta}(c_3,\nu,A_0)}{dA_0}
 =  z^{(box)}_{1,A_0}+z^{(box)}_{2,A_0}+\frac{(\alpha -\beta -2\beta y_2^2)y_2}{y_1}=0\nonumber \\
& \Longrightarrow &  -\frac{\mu(1-\beta)}{y_2e^{y_2^2}\erfc(y_2)+y_1 e^{y_1^2}\erfc(-y_1)}
+ \frac{(1-\mu)(1-\beta)}{y_2e^{y_2^2}\erfc(-y_2)+y_1e^{y_1^2}\erfc(y_1)}+
\alpha\sqrt{\pi}
=0\nonumber \\
& \Longrightarrow &  \frac{\mu(1-\beta)}{y_2e^{y_2^2}\erfc(y_2)+y_1 e^{y_1^2}\erfc(-y_1)}
\lp -1+
\frac{\lp\sqrt{2}y_2e^{y_2^2}\erfc(y_2)-\sqrt{\frac{2}{\pi}}\rp}{\lp -\sqrt{2}y_2e^{y_2^2}\erfc(-y_2)-\sqrt{\frac{2}{\pi}}\rp}\rp
\nonumber \\
&&+\frac{\beta\sqrt{2}}{\lp -\sqrt{2}y_2e^{y_2^2}\erfc(-y_2)-\sqrt{\frac{2}{\pi}}\rp}+
\alpha \sqrt{\pi}
=0\nonumber \\
& \Longrightarrow &  y_2e^{y_2^2}\erfc(y_2)+y_1 e^{y_1^2}\erfc(-y_1)=\mu(1-\beta)
\lp
\frac{y_2e^{y_2^2}\erfc(-y_2)+y_2e^{y_2^2}\erfc(y_2)}
{\alpha (\sqrt{\pi}y_2e^{y_2^2}\erfc(-y_2)+1)-\beta}\rp\nonumber \\
& \Longrightarrow &  y_1 e^{y_1^2}\erfc(-y_1)=
\lp
\frac{\mu(1-\beta)2y_2e^{y_2^2}}{\alpha (\sqrt{\pi}y_2e^{y_2^2}\erfc(-y_2)+1)-\beta}\rp-y_2e^{y_2^2}\erfc(y_2)\triangleq f^{(box)}_1(y_2;\alpha,\beta,\mu).\nonumber \\
\label{eq:boxdetanalIeer11h}
\end{eqnarray}
Similarly to what was done in (\ref{eq:detanalIeer11i}), one can also combine (\ref{eq:boxdetanalIeer11f}) and (\ref{eq:boxdetanalIeer11g}) in the following alternative way to obtain
\begin{eqnarray}
& & \frac{d\zeta^{(box)}_{\alpha,\beta}(c_3,\nu,A_0)}{dA_0}
 =  z^{(box)}_{1,A_0}+z^{(box)}_{2,A_0}+\frac{(\alpha -\beta-2\beta y_2^2)y_2}{y_1}=0\nonumber \\
 & \Longrightarrow &  -\frac{\mu(1-\beta)}{y_2e^{y_2^2}\erfc(y_2)+y_1 e^{y_1^2}\erfc(-y_1)}
+ \frac{(1-\mu)(1-\beta)}{y_2e^{y_2^2}\erfc(-y_2)+y_1e^{y_1^2}\erfc(y_1)}+
\alpha \sqrt{\pi}
=0\nonumber \\
& \Longrightarrow &  \frac{(1-\mu)(1-\beta)}{y_2e^{y_2^2}\erfc(-y_2)+y_1 e^{y_1^2}\erfc(y_1)}
\lp 1-
\frac{\lp -\sqrt{2}y_2e^{y_2^2}\erfc(-y_2)-\sqrt{\frac{2}{\pi}}\rp}{\lp\sqrt{2}y_2e^{y_2^2}\erfc(y_2)-\sqrt{\frac{2}{\pi}}\rp}\rp
\nonumber \\
&&+\frac{\beta\sqrt{2}}{\lp \sqrt{2}y_2e^{y_2^2}\erfc(y_2)-\sqrt{\frac{2}{\pi}}\rp}+
\alpha \sqrt{\pi}
=0\nonumber \\
& \Longrightarrow &  y_2e^{y_2^2}\erfc(-y_2)+y_1 e^{y_1^2}\erfc(y_1)=-(1-\mu)(1-\beta)
\lp
\frac{y_2e^{y_2^2}\erfc(-y_2)+y_2e^{y_2^2}\erfc(y_2)}{\alpha (\sqrt{\pi}y_2e^{y_2^2}\erfc(y_2)-1)-\beta}\rp\nonumber \\
& \Longrightarrow &  y_1 e^{y_1^2}\erfc(y_1)=
\lp
\frac{(1-\mu)(1-\beta)2y_2e^{y_2^2}}{\alpha(-\sqrt{\pi}y_2e^{y_2^2}\erfc(y_2)+1)-\beta}\rp-y_2e^{y_2^2}\erfc(-y_2)=-f^{(box)}_1(-y_2;\alpha,\beta,1-\mu).\nonumber \\
\label{eq:boxdetanalIeer11i}
\end{eqnarray}
A simple combination of (\ref{eq:boxdetanalIeer11h}) and (\ref{eq:boxdetanalIeer11i}) gives
\begin{equation}
\frac{1+\erf(y_1)}{1-\erf(y_1)}= \frac{\erfc(-y_1)}{\erfc(y_1)}=\frac{f^{(box)}_1(y_2;\alpha,\beta,\mu)}{-f^{(box)}_1(-y_2;\alpha,\beta,1-\mu)}. \\
\label{eq:boxdetanalIeer11j}
\end{equation}
Similarly to (\ref{eq:detanalIeer11k}), after solving over $\erf(y_1)$ from (\ref{eq:boxdetanalIeer11j}) we obtain
\begin{eqnarray}
& & \erf(y_1)  =
\frac{f^{(box)}_1(y_2;\alpha,\beta,\mu)+f^{(box)}_1(-y_2;\alpha,\beta,1-\mu)}{f^{(box)}_1(y_2;\alpha,\beta,\mu)-f^{(box)}_1(-y_2;\alpha,\beta,1-\mu)}\nonumber \\
&\Longleftrightarrow & y_1  =\erfinv\lp
\frac{f^{(box)}_1(y_2;\alpha,\beta,\mu)+f^{(box)}_1(-y_2;\alpha,\beta,1-\mu)}{f^{(box)}_1(y_2;\alpha,\beta,\mu)-f^{(box)}_1(-y_2;\alpha,\beta,1-\mu)}\rp.
\label{eq:boxdetanalIeer11k}
\end{eqnarray}
Combining (\ref{eq:boxdetanalIeer11h}) (or (\ref{eq:boxdetanalIeer11i})) and (\ref{eq:boxdetanalIeer11k}) we also have
\begin{eqnarray}
2\erfinv\lp
\frac{f^{(box)}_1(y_2;\alpha,\beta,\mu)+f^{(box)}_1(-y_2;\alpha,\beta,1-\mu)}{f^{(box)}_1(y_2;\alpha,\beta,\mu)-f^{(box)}_1(-y_2;\alpha,\beta,1-\mu)}\rp
\frac{e^{\erfinv\lp
\frac{f^{(box)}_1(y_2;\alpha,\beta,\mu)+f^{(box)}_1(-y_2;\alpha,\beta,1-\mu)}{f^{(box)}_1(y_2;\alpha,\beta,\mu)-f^{(box)}_1(-y_2;\alpha,\beta,1-\mu)}\rp^2}}
{f^{(box)}_1(y_2;\alpha,\beta,\mu)-f^{(box)}_1(-y_2;\alpha,\beta,1-\mu)}
=1.
\label{eq:boxdetanalIeer11l}
\end{eqnarray}
(\ref{eq:boxdetanalIeer11l}) can be used to determine $y_2$ which can be reused to obtain $y_1$ from (\ref{eq:boxdetanalIeer11h}) and (\ref{eq:boxdetanalIeer11i}) (basically (\ref{eq:boxdetanalIeer11k})). Using (\ref{eq:boxdetanalIerr1}), (\ref{eq:boxdetanalIeer12}), and (\ref{eq:boxdetanalIeer4a}), one can then obtain $\nu$, $A_0$, $c_3$, and $\gamma$ as in (\ref{eq:detanalIeer11m})
\begin{eqnarray}
  \nu & = & y_2\sqrt{2} \nonumber \\
  A_0 & = & \frac{y_1}{y_2} \nonumber \\
  c_3 & = & \frac{(1-A_0^2)\sqrt{\alpha}}{A_0}\nonumber \\
  \gamma & = & \frac{c_3}{2(1-A_0)}.
\label{eq:boxdetanalIeer11m}
\end{eqnarray}
Finally, after all of the above is determined one can compute $\zeta^{(box)}_{\alpha,\beta}(c_3,\nu,A_0)$ in (\ref{eq:boxdetanalIeer3}) following the methodology showcased in (\ref{eq:boxdetanalIeer11ma}) and (\ref{eq:boxdetanalIeer11mb}). As in (\ref{eq:boxdetanalIeer11ma}), we first note that from \cite{Stojnicl1RegPosasymldp} one has
\begin{equation}\label{eq:boxdetanalIeer11ma}
  I_{sph}=-\frac{(1-A_0^2)\alpha}{2}+\alpha\log(A_0).
\end{equation}
A combination of (\ref{eq:boxdetanalIeer11h}), (\ref{eq:boxdetanalIeer11i}), (\ref{eq:boxdetanalIeer11m}), and (\ref{eq:boxdetanalIeer11ma}) then finally produces
\begin{eqnarray}
I^{(box,ub)}_{err,u}& = & \zeta^{(box)}_{\alpha,\beta}(c_3,\nu,A_0) = -\frac{c_3^2}{2}+I_{sph}+\mu(1-\beta)\log{w_1}+(1-\mu)(1-\beta)\log{w_2}+\beta\log{w_3}+\frac{c_3^2}{2(1-A_0^2)}\nonumber \\
& = & -\frac{c_3^2}{2}-\frac{(1-A_0^2)\alpha}{2}+\alpha\log(A_0)+\mu(1-\beta)\log{w_1}+(1-\mu)(1-\beta)\log{w_2}+\beta\log{w_3}+\frac{c_3^2}{2(1-A_0^2)} \nonumber \\
& = & \alpha\log(A_0)+\mu(1-\beta)\log{w_1}+(1-\mu)(1-\beta)\log{w_2}+\beta\log{w_3} \nonumber \\
& = & \alpha\log\lp\frac{y_1}{y_2}\rp+\mu(1-\beta)\log \lp e^{y_2^2}y_2\erfc(y_2)+e^{y_1^2}y_1\erfc(-y_1)\rp\nonumber \\
& & +(1-\mu)(1-\beta)\log\lp e^{y_2^2}y_2\erfc(-y_2)+e^{y_1^2}y_1\erfc(y_1)\rp -(1-\beta)y_1^2-(1-\beta)\log(y_1)\nonumber \\
& & -(1-\beta)\log(2) + \beta (y_2^2-y_1^2)-\beta\log\lp\frac{y_1}{y_2}\rp\nonumber \\
& = & \alpha\log\lp\frac{y_1}{y_2}\rp+\mu(1-\beta)\log \lp \frac{\mu(1-\beta)2y_2e^{y_2^2}}
{\alpha (\sqrt{\pi}y_2e^{y_2^2}\erfc(-y_2)+1)-\beta}\rp\nonumber \\
& & +(1-\mu)(1-\beta)\log\lp \frac{(1-\mu)(1-\beta)2y_2e^{y_2^2}}
{\alpha(-\sqrt{\pi}y_2e^{y_2^2}\erfc(y_2)+1)-\beta}\rp \nonumber \\
& & -(1-\beta)y_1^2-(1-\beta)\log(y_1)
 -(1-\beta)\log(2) + \beta (y_2^2-y_1^2)-\beta\log\lp\frac{y_1}{y_2}\rp\nonumber \\
& = & (\alpha-1)\log\lp\frac{y_1}{y_2}\rp+\mu(1-\beta)\log \lp \frac{\mu(1-\beta)}
{\alpha (\sqrt{\pi}y_2e^{y_2^2}\erfc(-y_2)+1)-\beta}\rp\nonumber \\
& & +(1-\mu)(1-\beta)\log\lp \frac{(1-\mu)(1-\beta)}
{\alpha(-\sqrt{\pi}y_2e^{y_2^2}\erfc(y_2)+1)-\beta}\rp + y_2^2-y_1^2.\nonumber \\
\label{eq:boxdetanalIeer11mb}
\end{eqnarray}
It is rather clear from the presented discussion that the choice for $\nu$, $A_0$, and $c_3$ given in (\ref{eq:boxdetanalIeer11l}) and (\ref{eq:boxdetanalIeer11m}) ensures that (\ref{eq:boxdetanalIeer3a}) is satisfied. In fact, not only that, one can also argue that this choice is besides being a stationary point also a global optimum in (\ref{eq:boxdetanalIeer2}). As mentioned after (\ref{eq:detanalIeer11mb}) in Section \ref{sec:binl1}, we will not pursue these considerations further here. Instead, we will below present a different set of considerations which we will then connect to what we presented above. At that time it will become clear that not only is the choice for $\nu$, $A_0$, and $c_3$ given in (\ref{eq:boxdetanalIeer11l}) and (\ref{eq:boxdetanalIeer11m}) precisely the one that solves the optimization in (\ref{eq:boxdetanalIeer2}) but also precisely the one that determines $I^{(box)}_{err}(\alpha,\beta)$). We summarize the above results in the following theorem.
\begin{theorem}
Assume the setup of Theorem \ref{thm:boxldp2} and assume that a pair $(\alpha,\beta)$  is given. Also, assume that $\alpha>\alpha_w$ where $\alpha_w$ is obtained from the phase transition curve as the value for $\alpha$ that corresponds to the given $\beta$. Set
\begin{equation}\label{eq:boxthmldp3l1PTa}
 f^{(box)}_1(y_2;\alpha,\beta,\mu) \triangleq
\frac{2\mu(1-\beta)y_2e^{y_2^2}}{\alpha(\sqrt{\pi}y_2e^{y_2^2}\erfc(-y_2)+1)-\beta}-y_2e^{y_2^2}\erfc(y_2).
\end{equation}
Also let $y_2$ and $y_1$ satisfy the following \textbf{fundamental characterizations of the box $\ell_1$'s LDP:}

\begin{eqnarray}
2\erfinv\lp
\frac{f^{(box)}_1(y_2;\alpha,\beta,\mu)+f^{(box)}_1(-y_2;\alpha,\beta,1-\mu)}{f^{(box)}_1(y_2;\alpha,\beta,\mu)-f^{(box)}_1(-y_2;\alpha,\beta,1-\mu)}\rp
\frac{e^{\erfinv\lp
\frac{f^{(box)}_1(y_2;\alpha,\beta,\mu)+f^{(box)}_1(-y_2;\alpha,\beta,1-\mu)}{f^{(box)}_1(y_2;\alpha,\beta,\mu)-f^{(box)}_1(-y_2;\alpha,\beta,1-\mu)}\rp^2}}
{f^{(box)}_1(y_2;\alpha,\beta,\mu)-f^{(box)}_1(-y_2;\alpha,\beta,1-\mu)}
& = & 1\nonumber \\
\label{eq:boxthmldp3l1PT}
\end{eqnarray}
and
\begin{eqnarray}
y_1  =  \erfinv\lp
\frac{f^{(box)}_1(y_2;\alpha,\beta,\mu)+f^{(box)}_1(-y_2;\alpha,\beta,1-\mu)}{f^{(box)}_1(y_2;\alpha,\beta,\mu)-f^{(box)}_1(-y_2;\alpha,\beta,1-\mu)}\rp.\nonumber \\
\label{eq:boxthmldp3l1PTa}
\end{eqnarray}

\noindent Then
\begin{eqnarray}
I^{(box)}_{err}(\alpha,\beta)\triangleq\lim_{n\rightarrow\infty}\frac{\log{P^{(box)}_{err}}}{n}
& \leq &
(\alpha-1)\log\lp\frac{y_1}{y_2}\rp+\mu(1-\beta)\log \lp \frac{\mu(1-\beta)}{\alpha\lp\sqrt{\pi}e^{y_2^2}y_2\erfc(-y_2)+1\rp-\beta}\rp\nonumber \\
& & +(1-\mu)(1-\beta)\log\lp \frac{(1-\mu)(1-\beta)}{\alpha\lp -\sqrt{\pi}e^{y_2^2}y_2\erfc(y_2)+1\rp-\beta}\rp + y_2^2-y_1^2 \nonumber \\
& \triangleq & I^{(box)}_{ldp}(\alpha,\beta).
\label{eq:boxldpthm3Ierrub1}
\end{eqnarray}
\noindent Moreover, for the above choice of $y_2$ and  $y_1$, $\nu$, $A_0$, $c_3$, and $\gamma$ in (\ref{eq:boxdetanalIeer11m}) achieve an optimum in (\ref{eq:boxdetanalIeer2}) (and ultimately in (\ref{eq:boxldpthm2Ierrub1})).
\label{thm:boxldp3}
\end{theorem}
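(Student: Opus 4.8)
The plan is to convert the stationarity computation carried out through Section~\ref{sec:boxanalysisIerr} into a complete proof, in four stages: (i) reduce the optimization defining $I^{(box,ub)}_{err,u}$ to the three stationarity equations in~(\ref{eq:boxdetanalIeer3a}); (ii) solve those equations explicitly, recovering the characterization~(\ref{eq:boxthmldp3l1PT})--(\ref{eq:boxthmldp3l1PTa}) together with the parameter values~(\ref{eq:boxdetanalIeer11m}); (iii) substitute back and obtain the closed form~(\ref{eq:boxldpthm3Ierrub1}); and (iv) argue that the resulting critical point is actually a global optimizer.

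For stage (i), I would start from Theorem~\ref{thm:boxldp2} and make the change of variable $A_0=\sqrt{1-c_3/(2\gamma)}$ of~(\ref{eq:boxdetanalIerr1}), which recasts $I^{(box,ub)}_{err,u}(\alpha,\beta)$ as $\min_{c_3\geq0,\nu\geq0,A_0\leq1}\zeta^{(box)}_{\alpha,\beta}(c_3,\nu,A_0)$ with $\zeta^{(box)}_{\alpha,\beta}$ as in~(\ref{eq:boxdetanalIeer3}). Working in the hard (interior-optimum) regime, the optimum satisfies~(\ref{eq:boxdetanalIeer3a}). The $c_3$-derivative~(\ref{eq:boxdetanalIeer9}) has the same form as in the binary analysis of Section~\ref{sec:binl1}, so it gives $c_3=(1-A_0^2)\sqrt{\alpha}/A_0$ as in~(\ref{eq:boxdetanalIeer12}). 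For stage (ii), using the substitution $y_1=\nu/\sqrt2$, $y_2=\nu/(\sqrt2 A_0)$ from~(\ref{eq:boxdetanalIeer4a}), I would manipulate the $\nu$-equation~(\ref{eq:boxdetanalIeer4})--(\ref{eq:boxdetanalIeer4d}) and the $A_0$-equation~(\ref{eq:boxdetanalIeer10})--(\ref{eq:boxdetanalIeer11e}) into the two compact relations $y_1e^{y_1^2}\erfc(-y_1)=f^{(box)}_1(y_2;\alpha,\beta,\mu)$ and $y_1e^{y_1^2}\erfc(y_1)=-f^{(box)}_1(-y_2;\alpha,\beta,1-\mu)$ of~(\ref{eq:boxdetanalIeer11h})--(\ref{eq:boxdetanalIeer11i}); the key algebraic step is the elimination of the common denominators via~(\ref{eq:boxdetanalIeer11g}). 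Dividing the two relations yields~(\ref{eq:boxdetanalIeer11j}), solving for $\erf(y_1)$ yields~(\ref{eq:boxdetanalIeer11k}), and re-inserting this into either relation produces the single transcendental equation~(\ref{eq:boxdetanalIeer11l}) for $y_2$, which is exactly~(\ref{eq:boxthmldp3l1PT}), together with~(\ref{eq:boxthmldp3l1PTa}) for $y_1$; the remaining $\nu$, $A_0$, $c_3$, $\gamma$ then come from~(\ref{eq:boxdetanalIeer11m}).

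For stage (iii), I would substitute this point back into $\zeta^{(box)}_{\alpha,\beta}$. Using the closed form $I_{sph}=-\frac{(1-A_0^2)\alpha}{2}+\alpha\log(A_0)$ from~(\ref{eq:boxdetanalIeer11ma}) and $c_3=(1-A_0^2)\sqrt\alpha/A_0$, the combination $-\frac{c_3^2}{2}+\frac{c_3^2}{2(1-A_0^2)}$ equals $\frac{(1-A_0^2)\alpha}{2}$ and cancels the corresponding term in $I_{sph}$, leaving $\alpha\log A_0+\mu(1-\beta)\log w_1+(1-\mu)(1-\beta)\log w_2+\beta\log w_3$; feeding in the explicit forms of $w_1,w_2,w_3$ at the stationary $(\nu,A_0)$ together with~(\ref{eq:boxdetanalIeer11h})--(\ref{eq:boxdetanalIeer11i}) collapses everything to~(\ref{eq:boxdetanalIeer11mb}), i.e.\ to $I^{(box)}_{ldp}(\alpha,\beta)$ of~(\ref{eq:boxldpthm3Ierrub1}).

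The main obstacle is twofold: first, the bookkeeping of the $\nu$- and $A_0$-derivative identities --- in particular pushing~(\ref{eq:boxdetanalIeer11g}) through to~(\ref{eq:boxdetanalIeer11h}) and its alternate form~(\ref{eq:boxdetanalIeer11i}), where the extra $\beta\sqrt2\,y_1$ term (absent in the binary case) must be absorbed into $f^{(box)}_1$; second, and more seriously, stage (iv), the claim that the critical point is the \emph{global} optimum and not merely stationary. As in the binary case (cf.\ the remark after~(\ref{eq:detanalIeer11mb})), I would not attack this directly on $\zeta^{(box)}_{\alpha,\beta}$; instead, mirroring Section~\ref{sec:hdg}, the cleanest route is to pass to the box analogue of the high-dimensional geometry functional, establish its concavity in the ``common''/``exterior'' variables and convexity in the ``interior'' variable along the lines of Sections~\ref{sec:hdgconcgammag}--\ref{sec:hdgconcye}, verify that the same stationary point reappears there, and thereby certify global optimality. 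With that certification in place, the inequality in~(\ref{eq:boxldpthm3Ierrub1}) holds and the quantities in~(\ref{eq:boxdetanalIeer11m}) are genuine optimizers, which is precisely the assertion of the theorem.
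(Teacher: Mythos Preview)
Your proposal is correct and tracks the paper's own argument essentially step for step: the paper's proof of this theorem is just ``Follows from the above discussion,'' and that discussion (Section~\ref{sec:boxanalysisIerr}) is precisely your stages~(i)--(iii), with global optimality (your stage~(iv)) deferred to the box high-dimensional geometry Section~\ref{sec:boxhdg} exactly as you anticipate. Your identification of the extra $\beta\sqrt{2}\,y_1$ term as the main new bookkeeping issue relative to the binary case is also spot on.
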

\begin{proof} Follows from the above discussion.
\end{proof}
As in Section \ref{sec:binl1}, the above upper tail results remain correct in the lower tail regime as well. A short argument that we for the completeness present below (and that closely follows what we presented in Section \ref{sec:lowertail}) confirms this.

\subsubsection{Lower tail}
\label{sec:boxlowertail}

%

As in Section \ref{sec:lowertail}, we rely on the strategy introduced in
\cite{Stojnicl1RegPosasymldp} and write
\begin{equation}
P_{cor}
\leq  \min_{t_1}\min_{c_3\geq 0}
Ee^{c_3\|\g\|_2}Ee^{-c_3w(\h,S^{(box)}_w)}e^{-c_3t_1}/P(g\geq t_1),
\label{eq:boxldpprob3lower}
\end{equation}
where $P_{cor}=1-P^{(box)}_{err}$ is the probability that (\ref{eq:boxl1bin}) does produce the solution of (\ref{eq:l0}). Following (\ref{eq:ldpasymp1lower}) we also introduce the rate of $P_{cor}$'s decay
\begin{equation}\label{eq:boxldpasymp1lower}
  I^{(box)}_{cor}(\alpha,\beta)\triangleq\lim_{n\rightarrow\infty}\frac{\log{P_{cor}}}{n}.
\end{equation}
The lower tail analogue to Theorem \ref{thm:boxldp2} is then the following theorem.
\begin{theorem}
Assume the setup of Theorem \ref{thm:boxldp2}. Then
\begin{eqnarray}
I^{(box)}_{cor}(\alpha,\beta)& \triangleq & \lim_{n\rightarrow\infty}\frac{\log{P_{cor}}}{n}\nonumber \\
& \leq  &\min_{c_3\geq 0}\left (-\frac{c_3^2}{2}+I_{sph}^++\max_{\nu\geq 0,\gamma^{(s)}\geq 0} (\mu(1-\beta)\log{w_1}+(1-\mu)(1-\beta)\log{w_2}+\beta\log{w_3}-c_3\gamma)\right )\nonumber \\
& \triangleq & I_{cor,l}^{(box,ub)}(\alpha,\beta), \nonumber \\
\label{eq:boxldpthm2Icorub1}
\end{eqnarray}
where
\begin{eqnarray}
I_{sph}^+ &=& \widehat{\gamma_+}c_3-\frac{\alpha d}{2}\log\left (1-\frac{c_3}{2\widehat{\gamma_+}}\right )\nonumber \\
  \widehat{\gamma_+} &=& \frac{2c_3+\sqrt{4c_3^2+16\alpha }}{8}\nonumber \\
w_1 &=& \frac{1}{2}\lp\frac{1}{\sqrt{2\pi}}\int_{h}e^{-h^2/2}e^{-c_3\max(h-\nu,0)^2/4/\gamma}dh
  =\frac{e^{\frac{-c_3\nu^2/4/\gamma}{1+c_3/2/\gamma}}}{\sqrt{1+c_3/2/\gamma}}\erfc\left (\frac{\nu}{\sqrt{2}\sqrt{1+c_3/2/\gamma}}\right )+\erf\left (\frac{\nu}{\sqrt{2}}\right )+1\rp\nonumber \\
w_2 &=& \frac{1}{2}\lp\frac{1}{\sqrt{2\pi}}\int_{h}e^{-h^2/2}e^{-c_3\max(h+\nu,0)^2/4/\gamma}dh
  =\frac{e^{\frac{-c_3\nu^2/4/\gamma}{1+c_3/2/\gamma}}}{\sqrt{1+c_3/2/\gamma}}\erfc\left (\frac{-\nu}{\sqrt{2}\sqrt{1+c_3/2/\gamma}}\right )+\erf\left (\frac{-\nu}{\sqrt{2}}\right )+1\rp\nonumber \\
  w_3 &=&\frac{1}{\sqrt{2\pi}}\int_{h}e^{-h^2/2}e^{-c_3(h+\nu)^2/4/\gamma}dh= \frac{e^{\frac{-c_3\nu^2/4/\gamma}{1+c_3/2/\gamma}}}{\sqrt{1+c_3/2/\gamma}}.
\label{eq:boxldpthm2perrub2lower}
\end{eqnarray}\label{thm:boxldp2lower}
\end{theorem}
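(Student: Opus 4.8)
The plan is to run the lower-tail argument exactly as in Section \ref{sec:lowertail} for the binary $\ell_1$, which itself parallels the standard $\ell_1$ lower-tail derivation of \cite{Stojnicl1RegPosasymldp}, and to carry along the one genuinely new ingredient: the block of $k$ coordinates on which $\bar{\y}_i$ is sign-unconstrained in (\ref{eq:boxworkww2}), which contributes the untruncated Gaussian factor $w_3$ rather than a $\max(\cdot,0)$-type factor.

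First I would start from the bound (\ref{eq:boxldpprob3lower}) on $P_{cor}$ and decouple the three expectations, mirroring the passage to (\ref{eq:boxldpthm1perrub1}) but with the sign of the relevant exponent reversed. The spherical factor $Ee^{c_3\|\g\|_2}$ is handled by the usual Laplace/saddle-point evaluation; relative to the upper tail the sign reversal moves the saddle to the positive root, which after the scaling $c_3\to c_3\sqrt{n}$, $\gamma\to\gamma\sqrt{n}$ produces $\widehat{\gamma_+}=\frac{2c_3+\sqrt{4c_3^2+16\alpha}}{8}$ and the corresponding $I_{sph}^+$. The factor $e^{-c_3t_1}/P(g\geq t_1)$ in (\ref{eq:boxldpprob3lower}) is disposed of exactly as in \cite{Stojnicl1RegPosasymldp}: taking $t_1$ on the $\sqrt{n}$ scale, its exponential contribution is absorbed into the $-\tfrac{c_3^2}{2}$ term, and the auxiliary minimization over $t_1$ leaves no trace in the limiting rate beyond what already appears.

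Next I would treat $Ee^{-c_3 w(\h,S^{(box)}_w)}$. Using the variational form (\ref{eq:boxldpwhSw0})--(\ref{eq:boxldpwhSw}) and interchanging the outer optimization over $\nu,\gamma$ with the expectation in the standard way, the expectation factorizes across the three coordinate blocks cut out by $\mu$: the $\mu(n-k)$ coordinates with the $\max(h-\nu,0)^2$ penalty give $w_1$, the $(1-\mu)(n-k)$ coordinates with $\max(h+\nu,0)^2$ give $w_2$, and the $k$ coordinates carrying the full $(h+\nu)^2$ term (no truncation, since those $\bar{\y}_i$ are unrestricted in sign) give the plain Gaussian moment $w_3=\frac{1}{\sqrt{2\pi}}\int_h e^{-h^2/2}e^{-c_3(h+\nu)^2/4/\gamma}dh$, which equals the stated closed form and is finite precisely because $1+c_3/2/\gamma>0$ for $c_3\geq 0$, $\gamma\geq 0$. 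Assembling $\tfrac{1}{n}\log$ of the product, passing to the limit, and reading off the resulting expression gives $I_{cor,l}^{(box,ub)}(\alpha,\beta)$; the inner optimization becomes a maximization over $\nu\geq 0$, $\gamma\geq 0$, the $\max$ replacing the upper-tail $\min$ being once more a consequence of the sign flip.

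I expect the main obstacle to be bookkeeping rather than anything conceptual: the sign reversal must be propagated consistently through all three pieces — the spherical saddle (root selection for $\widehat{\gamma_+}$), the $w_1$, $w_2$, $w_3$ Gaussian integrals, and the $\min\leftrightarrow\max$ swap in the variational problem — and one must check that the convergence conditions $1+c_3/2/\gamma>0$ hold over the relevant range. Once these are in place the identification with the scaled Laplace evaluation is routine and the theorem follows as in \cite{Stojnicl1RegPosasymldp} and Section \ref{sec:lowertail}.
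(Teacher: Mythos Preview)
Your proposal is correct and follows essentially the same approach as the paper: the paper's own proof simply says the result ``follows in exactly the same way as the proof of Theorem \ref{thm:ldp2lower} and ultimately the corresponding result for the lower tail of the standard $\ell_1$ LDP in \cite{Stojnicl1RegPosasymldp},'' and your outline is a faithful expansion of exactly that route, correctly identifying the one new piece (the untruncated block producing $w_3$) and the sign-flip bookkeeping that converts the upper-tail argument into the lower-tail one.
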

\begin{proof} Follows in exactly the same way as the proof of Theorem \ref{thm:ldp2lower} and ultimately the corresponding result for the lower tail of the standard $\ell_1$ LDP in \cite{Stojnicl1RegPosasymldp}.
\end{proof}
Instead solving the above optimization problem, one can just quickly observe that
the change $c_3\rightarrow -c_3$ gives as in Section \ref{sec:boxanalysisIerr}
\begin{equation}\label{eq:boxdetanalIcor1}
  A_{0}\triangleq\sqrt{1-\frac{c_3}{2\gamma}},
\end{equation}
and
\begin{equation}\label{eq:boxdetanalIcor2}
I_{cor,l}^{(box,ub)}(\alpha,\beta)\triangleq \min_{c_3\leq 0}\max_{\nu\geq 0,A_0\leq 1}\zeta^{(box)}_{\alpha,\beta}(c_3,\nu,A_0)
\end{equation}
where
\begin{eqnarray}
\zeta^{(box)}_{\alpha,\beta}(c_3,\nu,A_0)&=&\left (-\frac{c_3^2}{2}+I_{sph}^+
+\mu(1-\beta)\log{w_1}+(1-\mu)(1-\beta)\log{w_2}+\beta\log{w_3}+\frac{c_3^2}{2(1-A_0^2)}\right )\nonumber \\
I_{sph}^+ &=& -\widehat{\gamma^+}c_3-\frac{\alpha }{2}\log\left (1+\frac{c_3}{2\widehat{\gamma^+}}\right )\nonumber \\
  \widehat{\gamma^+} &=& \frac{-c_3+\sqrt{c_3^2+4\alpha}}{4}=-\widehat{\gamma},\nonumber \\
\label{eq:boxdetanalIcor3}
\end{eqnarray}
and $w_1$, $w_2$, and $w_3$ are as in (\ref{eq:boxdetanalIeer3}). One then observes that $\zeta^{(box)}_{\alpha,\beta}(c_3,\nu,A_0)$ defined in (\ref{eq:boxdetanalIcor3}) is exactly the same as the corresponding one in (\ref{eq:boxdetanalIeer3}) which means that one can proceed with the computation of all the derivatives as earlier and the values we have chosen for $c_3$, $\nu$, $\gamma$, and $A_0$ in the upper tail regime will have the same form. The following theorem summarizes the final results.
\begin{theorem}
Assume the setup of Theorem \ref{thm:boxldp3} and assume that a pair $(\alpha,\beta)$  is given. Differently from Theorem \ref{thm:boxldp3}, assume that $\alpha<\alpha_w$. Also let $y_2$ and $y_1$ satisfy the \textbf{fundamental \emph{box} $\ell_1$'s LDP characterizations} as in Theorem \ref{thm:boxldp3}. Then choosing $\nu$, $c_3$, and $\gamma$ in the optimization problem in (\ref{eq:boxldpthm2Icorub1}) as $\nu$, $-c_3$, and $\gamma$  from Theorem \ref{thm:boxldp3} (or equivalently, choosing $\nu$, $c_3$, and $A_0$ in the optimization problem in (\ref{eq:boxdetanalIcor2}) as $\nu$, $c_3$, and $A_0$  from Theorem \ref{thm:boxldp3}) gives needed
$\zeta^{(box)}_{\alpha,\beta}(c_3,\nu,A_0)$.
\label{thm:boxldp3lower}
\end{theorem}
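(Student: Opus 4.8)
The plan is to mirror, essentially verbatim, the reduction carried out in Section~\ref{sec:lowertail} for the binary $\ell_1$ (Theorem~\ref{thm:ldp3lower}), now carrying the extra $w_3$ factor along. First I would start from Theorem~\ref{thm:boxldp2lower}, which already expresses the upper bound on $I^{(box)}_{cor}(\alpha,\beta)$ as the min--max program $\min_{c_3\geq 0}\bigl(-c_3^2/2+I_{sph}^{+}+\max_{\nu\geq 0,\gamma\geq 0}(\mu(1-\beta)\log w_1+(1-\mu)(1-\beta)\log w_2+\beta\log w_3-c_3\gamma)\bigr)$, and apply the substitution $c_3\to -c_3$ together with $A_0\triangleq\sqrt{1-c_3/(2\gamma)}$ exactly as in (\ref{eq:boxdetanalIcor1})--(\ref{eq:boxdetanalIcor3}). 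The point of this substitution, as recorded there, is that the resulting objective $\zeta^{(box)}_{\alpha,\beta}(c_3,\nu,A_0)$ in (\ref{eq:boxdetanalIcor3}) is \emph{literally identical} to the upper-tail objective (\ref{eq:boxdetanalIeer3}): the only changes are that $I_{sph}^{+}=-\widehat{\gamma^+}c_3-\frac{\alpha}{2}\log(1+\frac{c_3}{2\widehat{\gamma^+}})$ with $\widehat{\gamma^+}=-\widehat{\gamma}$ reproduces the very same $I_{sph}$, while $w_1,w_2,w_3$ are unchanged as functions of $(c_3,\nu,A_0)$.

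Second, because $\zeta^{(box)}_{\alpha,\beta}(c_3,\nu,A_0)$ is the same function, its three partial derivatives coincide with those computed in (\ref{eq:boxdetanalIeer9}), (\ref{eq:boxdetanalIeer4c}), and (\ref{eq:boxdetanalIeer11d}), and hence the stationarity system (\ref{eq:boxdetanalIeer3a}) has the same solution. In particular the critical point is still described by the fundamental box $\ell_1$'s LDP characterization (\ref{eq:boxthmldp3l1PT}) together with (\ref{eq:boxthmldp3l1PTa}), and $\nu,A_0,c_3,\gamma$ are recovered through (\ref{eq:boxdetanalIeer11m}). The only genuine distinction is the sign bookkeeping: for $\alpha<\alpha_w$ one has $y_1>y_2$, hence $A_0=y_1/y_2>1$, and then $c_3=(1-A_0^2)\sqrt{\alpha}/A_0<0$, which is precisely the range $c_3\leq 0$ over which the lower-tail program (\ref{eq:boxdetanalIcor2}) is optimized --- exactly as in the binary case, where for $\alpha<\alpha_w$ one obtains $c_3<0$ whereas in the upper tail $c_3>0$. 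Evaluating $\zeta^{(box)}_{\alpha,\beta}$ at this point reuses the computation (\ref{eq:boxdetanalIeer11ma})--(\ref{eq:boxdetanalIeer11mb}) word for word and yields $I^{(box)}_{ldp}(\alpha,\beta)$, the claimed value.

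The main obstacle I anticipate is not the algebra, which is inherited, but confirming that the saddle structure is the correct one in the lower-tail regime: in (\ref{eq:boxdetanalIcor2}) one has a $\min$ over $c_3\leq 0$ of a $\max$ over $(\nu,A_0)$, whereas the upper-tail program (\ref{eq:boxdetanalIeer2}) is a joint $\min$; one must check that at the common stationary point the second-order behaviour in $(\nu,A_0)$ indeed flips to a maximum (equivalently, that the convexity/concavity properties established along the lines of (\ref{eq:boxdetanalIeer4}) carry the expected signs once $A_0>1$ and $c_3<0$). Since this is entirely analogous to the argument for the standard $\ell_1$ in \cite{Stojnicl1RegPosasymldp} and for the binary $\ell_1$ in Theorem~\ref{thm:ldp3lower}, it goes through with at most cosmetic changes; together with the monotonicity of the defining functions in (\ref{eq:boxthmldp3l1PT}) this pins down the critical point as the relevant optimum and completes the proof.
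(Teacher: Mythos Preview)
Your proposal is correct and follows essentially the same approach as the paper: the paper's own proof is a one-line pointer (``Follows from the considerations leading to Theorem~\ref{thm:boxldp3}''), relying precisely on the $c_3\to -c_3$ substitution in (\ref{eq:boxdetanalIcor1})--(\ref{eq:boxdetanalIcor3}) that makes the lower-tail objective identical to the upper-tail one, together with the sign observation $y_1>y_2\Rightarrow A_0>1\Rightarrow c_3<0$ recorded immediately after the theorem. Your write-up is in fact more explicit than the paper's, but the argument is the same.
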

\begin{proof} Follows from the considerations leading to Theorem \ref{thm:boxldp3}.
\end{proof}
Similarly to what was observed after Theorem \ref{thm:ldp3lower} we have for $\alpha<\alpha_w$, $y_1>y_2$ which means $A_0>1$ and finally $c_3<0$. This is of course different from the upper tail regime (i.e. Theorem \ref{thm:boxldp3}), where the reasoning is reversed and $c_3>0$.

\subsection{High-dimensional geometry}
\label{sec:boxhdg}

In this section we provide an analysis that is analogous to the one provided in Section \ref{sec:hdg} for the binary $\ell_1$.
As in Section \ref{sec:hdg}, the analysis that we will present below relies on a high-dimensional integral geometry approach. Many aspects of the analysis presented in Section \ref{sec:hdg} will be directly applicable here as well. Some of them though will be different. As usual, we will focus on highlighting the key differences.

As earlier, we will here again mostly focus on the upper tail regime (the results for the lower tail will automatically follow). Mathematically, the upper tail regime will assume that we are given a pair $(\alpha,\beta)$ such that $\alpha>\alpha_w$, where $\alpha_w$ is as in Theorems \ref{thm:boxldp3}, \ref{thm:boxldp2lower}, and \ref{thm:boxldp3lower}.

We will rely on the following observations from \cite{Stojnicl1BnBxfinn}
\begin{equation}\label{eq:boxhdg1}
  \Psi^{(box)}_{net}(\alpha,\beta)=I^{(box)}_{err}(\alpha,\beta)\triangleq\lim_{n\rightarrow\infty}\frac{\log{P^{(box)}_{err}}}{n}
  =\max_{\gamma_g\in(0,\min(1-\alpha,(1-\mu)(1-\beta)))}
  \lp \psicom^{(box)}+\psiint^{(box)}+\psiext^{(box)}\rp,
\end{equation}
where
\begin{eqnarray}
H(x) & = & x\log(x)+(1-x)\log(1-x)\nonumber \\
\psicom^{(box)} & = & -\mu(1-\beta)H\lp \frac{1-\alpha-\gamma_g}{\mu(1-\beta)} \rp
-(1-\mu)(1-\beta)H\lp \frac{(1-\mu)(1-\beta)-\gamma_g}{(1-\mu)(1-\beta)} \rp\nonumber \\
\psiint^{(box)} & = & \min_{y_i\geq 0} (\alpha y_i^2 +((1-\beta)\mu-(1-\alpha-\gamma_g))\log(\erfc(y_i))+((1-\beta)(1-\mu)-\gamma_g)\log(\erfc(-y_i)))\nonumber \\
& & - (\alpha-\beta) \log(2)\nonumber\\
\psiext^{(box)} & = & \max_{y_e\geq 0} (-\alpha y_e^2 +(1-\alpha-\gamma_g)\log(\erfc(-y_e))+\gamma_g\log(\erfc(y_e)))-(1-\alpha)\log(2). \label{eq:boxhdg2}
\end{eqnarray}
As in Section \ref{sec:binl1}, instead of solving the above problem numerically, we will here raise the bar a bit higher and look for an explicit solution. We will start with the following analogue to (\ref{eq:hdg1a})
\begin{equation}\label{eq:boxhdg1a}
\max_{\gamma_g\in(0,\beta),y_e\geq 0}\min_{y_i\geq 0} \zeta^{(box)}_{\alpha,\beta}(\gamma_g,y_e,y_i)
\end{equation}
where
\begin{eqnarray}\label{eq:boxhdg1b}
\zeta^{(box)}_{\alpha,\beta}(\gamma_g,y_e,y_i) & = &
-\mu(1-\beta)H\lp \frac{1-\alpha-\gamma_g}{\mu(1-\beta)} \rp
-(1-\mu)(1-\beta)H\lp \frac{(1-\mu)(1-\beta)-\gamma_g}{(1-\mu)(1-\beta)} \rp\nonumber \\
&& + \alpha y_i^2 +((1-\beta)\mu-(1-\alpha-\gamma_g))\log(\erfc(y_i))+((1-\beta)(1-\mu)-\gamma_g)\log(\erfc(-y_i)))\nonumber \\
& & - (\alpha-\beta) \log(2)\nonumber\\
& & +
(-\alpha y_e^2 +(1-\alpha-\gamma_g)\log(\erfc(-y_e))+\gamma_g\log(\erfc(y_e)))-(1-\alpha)\log(2).
\end{eqnarray}
Following further Section \ref{sec:binl1}, we below briefly discuss a few useful properties of $\zeta^{(box)}_{\alpha,\beta}(\gamma_g,y_e,y_i)$.

\subsubsection{Properties of $\zeta^{(box)}_{\alpha,\beta}(\gamma_g,y_e,y_i)$}
\label{sec:boxhdgpropzeta}

Here we will quickly establish that for any fixed $(\alpha,\beta)\in (0,1)\times(0,\alpha)$, $\zeta^{(box)}_{\alpha,\beta}(\gamma_g,y_e,y_i)$ is concave in $\gamma_g$ and $y_e$ and convex in $y_i$ in the optimizing domain (these are precisely the same properties that  $\zeta_{\alpha,\beta}(\gamma_g,y_e,y_i)$ exhibits).

Concavity in $y_e$ follows automatically from the concavity of $\zeta_{\alpha,\beta}(\gamma_g,y_e,y_i)$. Concavity in $\gamma_g$ follows in a very similar manner. We first compute the first derivative with respect to $\gamma_g$
\begin{eqnarray}\label{eq:boxhdg1o}
\frac{d\zeta^{(box)}_{\alpha,\beta}(\gamma_g,y_e,y_i)}{d\gamma_g}
& = &-\mu(1-\beta)\frac{dH\lp \frac{1-\alpha-\gamma_g}{\mu(1-\beta)} \rp}{d\gamma_g}-(1-\mu)(1-\beta)\frac{d H\lp \frac{(1-\mu)(1-\beta)-\gamma_g}{(1-\mu)(1-\beta)} \rp}{d\gamma_g}\nonumber \\
& & +\log\lp\frac{\erfc(y_i)\erfc(y_e)}{\erfc(-y_i)\erfc(-y_e)}\rp\nonumber \\
& = &\log\lp \frac{\frac{1-\alpha-\gamma_g}{\mu(1-\beta)}}{1-\frac{1-\alpha-\gamma_g}{\mu(1-\beta)}} \rp
+\log \lp \frac{\frac{(1-\mu)(1-\beta)-\gamma_g}{(1-\mu)(1-\beta)}}
{1-\frac{(1-\mu)(1-\beta)-\gamma_g}{(1-\mu)(1-\beta)}} \rp+\log\lp\frac{\erfc(y_i)\erfc(y_e)}{\erfc(-y_i)\erfc(-y_e)}\rp\nonumber \\
& = &\log\lp \frac{1-\alpha-\gamma_g}{\mu(1-\beta)-(1-\alpha-\gamma_g)} \rp+\log \lp \frac{(1-\mu)(1-\beta)-\gamma_g}
{\gamma_g} \rp\nonumber \\
& & +\log\lp\frac{\erfc(y_i)\erfc(y_e)}{\erfc(-y_i)\erfc(-y_e)}\rp,
\end{eqnarray}
and then the second one as well
\begin{eqnarray}\label{eq:boxhdg1oa}
\frac{d^2\zeta^{(box)}_{\alpha,\beta}(\gamma_g,y_e,y_i)}{d\gamma_g^2}
& = & \frac{d\lp\log\lp \frac{1-\alpha-\gamma_g}{\mu(1-\beta)-(1-\alpha-\gamma_g)} \rp+\log \lp \frac{(1-\mu)(1-\beta)-\gamma_g}
{\gamma_g} \rp+\log\lp\frac{\erfc(y_i)\erfc(y_e)}{\erfc(-y_i)\erfc(-y_e)}\rp\rp}{d\gamma_g}\nonumber \\
& = & \frac{-\frac{1-\alpha-\gamma_g}{(\mu(1-\beta)-(1-\alpha-\gamma_g))^2}-\frac{1}{\mu(1-\beta)-(1-\alpha-\gamma_g)}}
{\frac{1-\alpha-\gamma_g}{\mu(1-\beta)-(1-\alpha-\gamma_g)}}
+\frac{-\frac{(1-\mu)(1-\beta)-\gamma_g}
{\gamma_g^2}-\frac{1}
{\gamma_g}}{\frac{(1-\mu)(1-\beta)-\gamma_g}
{\gamma_g}}\nonumber \\
& = & \frac{-\mu(1-\beta)}
{(1-\alpha-\gamma_g)(\mu(1-\beta)-(1-\alpha-\gamma_g))}
+\frac{-(1-\mu)(1-\beta)}{\gamma_g((1-\mu)(1-\beta)-\gamma_g)}
\nonumber \\
& < & 0.
\end{eqnarray}

To check convexity in $y_i$ we will need a bit of adaptation of the arguments from Section \ref{sec:binl1}. We first recall that, as in Section \ref{sec:binl1}, we will below rely on the following
\begin{eqnarray}\label{eq:boxhdg1c}
\frac{d\log(\erfc(x))}{dx}=-\frac{2e^{-x^2}}{\sqrt{\pi}\erfc(x)} \quad \mbox{and} \quad \frac{d\log(\erfc(-x))}{dx}=\frac{2e^{-x^2}}{\sqrt{\pi}\erfc(-x)},
\end{eqnarray}
and
\begin{eqnarray}\label{eq:boxhdg1d}
\frac{d^2\log(\erfc(x))}{dx^2}=\frac{4e^{-x^2}\sqrt{\pi}x\erfc(x)-4e^{-2x^2}}{\pi\erfc(x)^2} \quad \mbox{and} \quad \frac{d^2\log(\erfc(-x))}{dx^2}=\frac{-4e^{-x^2}\sqrt{\pi}x\erfc(-x)-4e^{-2x^2}}{\pi\erfc(-x)^2}.\nonumber \\
\end{eqnarray}
Now, we have for the first derivative with respect to $y_1$
\begin{eqnarray}\label{eq:boxhdg1e}
\frac{d\zeta^{(box)}_{\alpha,\beta}(\gamma_g,y_e,y_i)}{dy_i}
& = &\frac{d(\alpha y_i^2 +((1-\beta)\mu-(1-\alpha-\gamma_g))\log(\erfc(y_i))+((1-\beta)(1-\mu)-\gamma_g)\log(\erfc(-y_i)))}{dy_i}\nonumber \\
& = & 2\alpha y_i-\frac{2((1-\beta)\mu-(1-\alpha-\gamma_g))e^{-y_i^2}}{\sqrt{\pi}\erfc(y_i)}+\frac{2((1-\beta)(1-\mu)-\gamma_g)e^{-y_i^2}}{\sqrt{\pi}\erfc(-y_i)},
\end{eqnarray}
and for the second
\begin{eqnarray}\label{eq:boxhdg1f}
\frac{d^2\zeta^{(box)}_{\alpha,\beta}(\gamma_g,y_e,y_i)}{dy_i^2}
& = & \frac{d^2(\alpha y_i^2 +((1-\beta)\mu-(1-\alpha-\gamma_g))\log(\erfc(y_i))+((1-\beta)(1-\mu)-\gamma_g)\log(\erfc(-y_i)))}{dy_i^2}\nonumber \\
& = & 2\alpha+ ((1-\beta)\mu-(1-\alpha-\gamma_g))\frac{4e^{-y_i^2}\sqrt{\pi}y_i\erfc(y_i)-4e^{-2y_i^2}}{\pi\erfc(y_i)^2}\nonumber \\
& & + ((1-\beta)(1-\mu)-\gamma_g)\frac{-4e^{-y_i^2}\sqrt{\pi}y_i\erfc(-y_i)-4e^{-2y_i^2}}{\pi\erfc(-y_i)^2}\nonumber \\
& \geq & 2(\alpha-\beta)+ ((1-\beta)\mu-(1-\alpha-\gamma_g))\frac{4e^{-y_i^2}\sqrt{\pi}y_i\erfc(y_i)-4e^{-2y_i^2}}{\pi\erfc(y_i)^2}\nonumber \\
& & + ((1-\beta)(1-\mu)-\gamma_g)\frac{-4e^{-y_i^2}\sqrt{\pi}y_i\erfc(-y_i)-4e^{-2y_i^2}}{\pi\erfc(-y_i)^2}\nonumber \\
& = & 2((1-\beta)\mu-(1-\alpha-\gamma_g))\frac{\pi\erfc(y_i)^2+2e^{-y_i^2}\sqrt{\pi}y_i\erfc(y_i)-2e^{-2y_i^2}}{\pi\erfc(y_i)^2}\nonumber \\
& & + 2((1-\beta)(1-\mu)-\gamma_g)\frac{\pi\erfc(-y_i)^2-2e^{-y_i^2}\sqrt{\pi}y_i\erfc(-y_i)-2e^{-2y_i^2}}{\pi\erfc(-y_i)^2}\nonumber \\
& > & 0,
\end{eqnarray}
where the last inequality follows by repeating step by step the line of arguments after (\ref{eq:hdg1f}).

\subsubsection{Solving the derivative equations}
\label{sec:boxhdgdereqns}

After establishing the above properties of $\zeta^{(box)}_{\alpha,\beta}(\gamma_g,y_e,y_i)$ we now focus on solving the following system of derivative equations
\begin{equation}\label{eq:boxhdg1n}
  \frac{d\zeta^{(box)}_{\alpha,\beta}(\gamma_g,y_e,y_i)}{d\gamma_g}= \frac{d\zeta^{(box)}_{\alpha,\beta}(\gamma_g,y_e,y_i)}{dy_e}
=  \frac{d\zeta^{(box)}_{\alpha,\beta}(\gamma_g,y_e,y_i)}{dy_i}=0.
\end{equation}
The derivatives with respect to $\gamma_g$ and $y_i$ are computed in (\ref{eq:boxhdg1o}) and (\ref{eq:boxhdg1e}), respectively. We also recall on the derivative with respect to $y_e$ from (\ref{eq:hdg1h})
\begin{eqnarray}\label{eq:boxhdg1h}
\frac{d\zeta^{(box)}_{\alpha,\beta}(\gamma_g,y_e,y_i)}{dy_e}
& = &\frac{d(-\alpha y_e^2 +(1-\alpha-\gamma_g)\log(\erfc(-y_e))+\gamma_g\log(\erfc(y_e)))}{dy_e}\nonumber \\
& = & -2\alpha y_e+\frac{2(1-\alpha-\gamma_g)e^{-y_e^2}}{\sqrt{\pi}\erfc(-y_e)}-\frac{2\gamma_ge^{-y_e^2}}{\sqrt{\pi}\erfc(y_e)}.
\end{eqnarray}
From (\ref{eq:boxhdg1e}) (after setting the derivative to zero) we have
\begin{eqnarray}\label{eq:boxhdg1p}
& & \gamma_g\lp\frac{1}{\erfc(y_i)}+ \frac{1}{\erfc(-y_i)}\rp=
\sqrt{\pi}\alpha e^{y_i^2} y_i-\frac{(1-\beta)\mu-(1-\alpha)}{\erfc(y_i)}+\frac{(1-\mu)(1-\beta)}{\erfc(-y_i)}\nonumber \\
&\Longleftrightarrow & \gamma_g=
\frac{\alpha}{2}\lp\sqrt{\pi} e^{y_i^2} y_i\erfc(y_i)\erfc(-y_i)-\erfc(-y_i)\rp-(1-\beta)\mu+1-\frac{\erfc(y_i)\beta}{2}.
\end{eqnarray}
Analogously to (\ref{eq:hdg1q}) we set
\begin{equation}\label{eq:boxhdg1q}
  A_{box}=\frac{(1-\beta)\mu-(1-\alpha-\gamma_g)}{1-\alpha-\gamma_g}\frac{\gamma_g}{(1-\mu)(1-\beta)-\gamma_g}.
\end{equation}
Then (\ref{eq:boxhdg1o}) gives
\begin{eqnarray}\label{eq:boxhdg1r}
& &   A_{box}=\frac{\erfc(y_i)\erfc(y_e)}{\erfc(-y_i)\erfc(-y_e)}\nonumber \\
& \Longleftrightarrow &  \frac{\erfc(-y_e)}{\erfc(y_e)}=\frac{\erfc(y_i)}{\erfc(-y_i)A_{box} }\nonumber \\
& \Longleftrightarrow &  y_e=\erfinv\lp\frac{\erfc(y_i)-A_{box}\erfc(-y_i)}{\erfc(y_i)+A_{box}\erfc(-y_i)}\rp.
\end{eqnarray}
As in Section \ref{sec:binl1}, one can now use $\gamma_g$ from (\ref{eq:boxhdg1p}) and $y_e$ from (\ref{eq:boxhdg1r}) and combine it in the right side of (\ref{eq:boxhdg1h}). One can then equal the right side of (\ref{eq:boxhdg1h}) to zero and effectively obtain one equation with $y_i$ as the only unknown. After determining $y_i$ from such an equation one can use it to compute $\gamma_g$ through (\ref{eq:boxhdg1p}) and $y_e$ through (\ref{eq:boxhdg1r}). As in Section \ref{sec:binl1}, we will focus on presenting the solution in a bit more explicit way and if possible in a way that is a bit more connected to what was presented in Section \ref{sec:boxldp}. To that end, we start, as usual, by setting the derivative in (\ref{eq:boxhdg1h}) to zero to obtain
\begin{eqnarray}\label{eq:boxhdg1s}
& & \gamma_g\lp\frac{1}{\erfc(y_e)}+ \frac{1}{\erfc(-y_e)}\rp=
-\sqrt{\pi}\alpha e^{y_e^2} y_e+\frac{1-\alpha}{\erfc(-y_e)}\nonumber \\
&\Longleftrightarrow & \gamma_g=
\frac{\alpha}{2}\lp -\sqrt{\pi} e^{y_e^2} y_e\erfc(-y_e)-1\rp\erfc(y_e)+\frac{\erfc(y_e)}{2}.
\end{eqnarray}
Using $\gamma_g$ from (\ref{eq:boxhdg1p}) in (\ref{eq:boxhdg1q}) gives
\begin{eqnarray}\label{eq:boxhdg1t}
  A_{box} & = & \frac{\alpha+\frac{\alpha}{2}\lp\sqrt{\pi} e^{y_i^2} y_i\erfc(y_i)\erfc(-y_i)-\erfc(-y_i)\rp-\frac{\erfc(y_i)\beta}{2}}{1-\alpha-\lp\frac{\alpha}{2}\lp\sqrt{\pi} e^{y_i^2} y_i\erfc(y_i)\erfc(-y_i)-\erfc(-y_i)\rp-(1-\beta)\mu+1-\frac{\erfc(y_i)\beta}{2}\rp}\nonumber \\
  & & \times
  \frac{\frac{\alpha}{2}\lp\sqrt{\pi} e^{y_i^2} y_i\erfc(y_i)\erfc(-y_i)-\erfc(-y_i)\rp-(1-\beta)\mu+1-\frac{\erfc(y_i)\beta}{2}}
  {-\frac{\alpha}{2}\lp\sqrt{\pi} e^{y_i^2} y_i\erfc(y_i)\erfc(-y_i)-\erfc(-y_i)\rp-\frac{\erfc(-y_i)\beta}{2}}.
\end{eqnarray}
Transforming a bit more $A_{box}$ becomes
\begin{equation}\label{eq:boxhdg1u}
  A_{box}=\frac{2(1-\beta)(1-\mu)+\lp\alpha\lp\sqrt{\pi} e^{y_i^2} y_i\erfc(y_i)-1\rp+\beta\rp\erfc(-y_i)}{2(1-\beta)\mu-\lp\alpha\lp\sqrt{\pi} e^{y_i^2} y_i\erfc(-y_i)+1\rp-\beta\rp\erfc(y_i)}\frac{\alpha\lp\sqrt{\pi} e^{y_i^2} y_i\erfc(-y_i)+1\rp-\beta}
  {\alpha\lp-\sqrt{\pi} e^{y_i^2} y_i\erfc(y_i)+1\rp-\beta}\frac{\erfc(y_i)}{\erfc(-y_i)}.
\end{equation}
Combining (\ref{eq:boxhdg1r}) and (\ref{eq:boxhdg1u}) we also have
\begin{eqnarray}\label{eq:boxhdg1v}
  \frac{\erfc(-y_e)}{\erfc(y_e)}=\frac{\erfc(y_i)}{\erfc(-y_i)A_{box} }
  & = & \frac{2(1-\beta)\mu-\lp\alpha\lp\sqrt{\pi} e^{y_i^2} y_i\erfc(-y_i)+1\rp-\beta\rp\erfc(y_i)}{2(1-\beta)(1-\mu)+\lp\alpha\lp\sqrt{\pi} e^{y_i^2} y_i\erfc(y_i)-1\rp+\beta\rp\erfc(-y_i)}\nonumber \\
  & & \times \frac{\alpha\lp-\sqrt{\pi} e^{y_i^2} y_i\erfc(y_i)+1\rp-\beta}{\alpha\lp\sqrt{\pi} e^{y_i^2} y_i\erfc(-y_i)+1\rp-\beta}.
\end{eqnarray}
We also recall on (\ref{eq:boxdetanalIeer11h}) and (\ref{eq:boxdetanalIeer11i}) and note that
\begin{eqnarray}\label{eq:boxhdg1w}
\frac{\lp
\frac{\mu(1-\beta)2y_2e^{y_2^2}}{\alpha (\sqrt{\pi}y_2e^{y_2^2}\erfc(-y_2)+1)-\beta}\rp-y_2e^{y_2^2}\erfc(y_2)}
{\lp
\frac{(1-\mu)(1-\beta)2y_2e^{y_2^2}}{\alpha(-\sqrt{\pi}y_2e^{y_2^2}\erfc(y_2)+1)-\beta}\rp-y_2e^{y_2^2}\erfc(-y_2)}
=\frac{f^{(box)}_1(y_2;\alpha,\beta,\mu)}{-f^{(box)}_1(-y_2;\alpha,\beta,1-\mu)},
\end{eqnarray}
and
\begin{eqnarray}\label{eq:boxhdg1x}
\frac{
\mu(1-\beta)2y_2e^{y_2^2}-\lp\alpha (\sqrt{\pi}y_2e^{y_2^2}\erfc(-y_2)+1)-\beta\rp y_2e^{y_2^2}\erfc(y_2)}
{(1-\mu)(1-\beta)2y_2e^{y_2^2}-\lp\alpha(-\sqrt{\pi}y_2e^{y_2^2}\erfc(y_2)+1)-\beta\rp y_2e^{y_2^2}\erfc(-y_2)}
& \times & \frac{\alpha\lp-\sqrt{\pi} e^{y_2^2} y_i\erfc(y_2)+1\rp-\beta}{\alpha\lp\sqrt{\pi} e^{y_2^2} y_2\erfc(-y_2)+1\rp-\beta}\nonumber \\
& & =\frac{f^{(box)}_1(y_2;\alpha,\beta,\mu)}{-f^{(box)}_1(-y_2;\alpha,\beta,1-\mu)}.\nonumber \\
\end{eqnarray}
From (\ref{eq:boxhdg1v}) and (\ref{eq:boxhdg1x}) one also has
\begin{eqnarray}\label{eq:boxhdg1y}
  \frac{\erfc(-y_e)}{\erfc(y_e)}
 & = & \frac{2(1-\beta)\mu-\lp\alpha\lp\sqrt{\pi} e^{y_i^2} y_i\erfc(-y_i)+1\rp-\beta\rp\erfc(y_i)}{2(1-\beta)(1-\mu)+\lp\alpha\lp\sqrt{\pi} e^{y_i^2} y_i\erfc(y_i)-1\rp+\beta\rp\erfc(-y_i)}
   \frac{\alpha\lp-\sqrt{\pi} e^{y_i^2} y_i\erfc(y_i)+1\rp-\beta}{\alpha\lp\sqrt{\pi} e^{y_i^2} y_i\erfc(-y_i)+1\rp-\beta}\nonumber \\
 & = & \frac{f^{(box)}_1(y_i;\alpha,\beta,\mu)}{-f^{(box)}_1(-y_i;\alpha,\beta,1-\mu)}.
\end{eqnarray}
A combination of (\ref{eq:boxdetanalIeer11j}), (\ref{eq:boxdetanalIeer11k}), and (\ref{eq:boxhdg1y}) gives
\begin{eqnarray}
& & \erf(y_e)  =
\frac{f^{(box)}_1(y_i;\alpha,\beta,\mu)+f^{(box)}_1(-y_i;\alpha,\beta,1-\mu)}{f^{(box)}_1(y_i;\alpha,\beta,\mu)-f^{(box)}_1(-y_i;\alpha,\beta,1-\mu)}\nonumber \\
&\Longleftrightarrow & y_e  =\erfinv\lp
\frac{f^{(box)}_1(y_i;\alpha,\beta,\mu)+f^{(box)}_1(-y_i;\alpha,\beta,1-\mu)}{f^{(box)}_1(y_i;\alpha,\beta,\mu)-f^{(box)}_1(-y_i;\alpha,\beta,1-\mu)}\rp.
\label{eq:boxhdg1z}
\end{eqnarray}
Connecting (\ref{eq:boxhdg1p}), (\ref{eq:boxhdg1s}), and ultimately (\ref{eq:boxdetanalIeer11h}) gives
\begin{eqnarray}\label{eq:boxhdg1za}
\gamma_g & = &
\frac{\alpha}{2}\lp -\sqrt{\pi} e^{y_e^2} y_e\erfc(-y_e)-1\rp\erfc(y_e)+\frac{\erfc(y_e)}{2} \nonumber \\
 & = & \frac{\alpha}{2}\lp\sqrt{\pi} e^{y_i^2} y_i\erfc(y_i)\erfc(-y_i)-\erfc(-y_i)\rp-(1-\beta)\mu+1-\frac{\erfc(y_i)\beta}{2}\nonumber \\
& = & \frac{-f^{(box)}_1(-y_i;\alpha,\beta,1-\mu)\lp\alpha\lp -\sqrt{\pi} e^{y_i^2} y_i\erfc(y_i)+1\rp-\beta\rp}{2e^{y_i^2} y_i}.
\end{eqnarray}
Combining further (\ref{eq:boxhdg1z}) and (\ref{eq:boxhdg1za}) one obtains
\begin{multline}\label{eq:boxhdg1zb}
\alpha\lp -\sqrt{\pi} e^{y_e^2} y_e\erfc(-y_e)-1\rp+1
 =  \frac{\lp f^{(box)}_1(y_i;\alpha,\beta,\mu)-f^{(box)}_1(-y_i;\alpha,\beta,1-\mu)\rp}{2e^{y_i^2} y_i} \\
 \times
\lp\alpha\lp -\sqrt{\pi} e^{y_i^2} y_i\erfc(y_i)+1\rp-\beta\rp,
\end{multline}
and
\begin{multline}\label{eq:boxhdg1zc}
e^{y_e^2} y_e\erfc(-y_e)
=\frac{\lp f^{(box)}_1(y_i;\alpha,\beta,\mu)-f^{(box)}_1(-y_i;\alpha,\beta,1-\mu)\rp \lp\alpha\lp -\sqrt{\pi} e^{y_i^2} y_i\erfc(y_i)+1\rp-\beta\rp
}{-\sqrt{\pi}2e^{y_i^2} y_i\alpha} \\
+
\frac{-2e^{y_i^2} y_i+2e^{y_i^2} y_i\alpha}{-\sqrt{\pi}2e^{y_i^2} y_i\alpha}.
\end{multline}
Similarly to what ws done in Section \ref{sec:binl1}, we will now argue that the right side in (\ref{eq:boxhdg1zc}) is equal to $f^{(box)}_1(y_i;\alpha,\beta,\mu)$. That will follow if
\begin{multline}\label{eq:boxhdg1zd}
\lp f^{(box)}_1(y_i;\alpha,\beta,\mu)-f^{(box)}_1(-y_i;\alpha,\beta,1-\mu)\rp \lp\alpha\lp -\sqrt{\pi} e^{y_i^2} y_i\erfc(y_i)+1\rp-\beta\rp-2e^{y_i^2} y_i+2e^{y_i^2} y_i\alpha\\
=-\sqrt{\pi}2e^{y_i^2} y_i\alpha f^{(box)}_1(y_i;\alpha,\beta,\mu),
\end{multline}
or
\begin{multline}\label{eq:boxhdg1ze}
-f^{(box)}_1(-y_i;\alpha,\beta,1-\mu) \lp\alpha\lp -\sqrt{\pi} e^{y_i^2} y_i\erfc(y_i)+1\rp-\beta\rp-2e^{y_i^2} y_i+2e^{y_i^2} y_i\alpha \\
=
 f^{(box)}_1(y_i;\alpha,\beta,\mu) \lp\alpha\lp -\sqrt{\pi} e^{y_i^2} y_i\erfc(-y_i)-1\rp +\beta\rp.
\end{multline}
Recalling again on (\ref{eq:boxdetanalIeer11h}) and (\ref{eq:boxdetanalIeer11i}), (\ref{eq:boxhdg1ze}) can be rewritten in the following way
\begin{multline}\label{eq:boxhdg1zf}
2(1-\mu)(1-\beta)-\lp\alpha\lp -\sqrt{\pi} e^{y_i^2} y_i\erfc(y_i)+1\rp-\beta\rp\erfc(-y_i)-2+2\alpha \\
=
-\lp 2\mu(1-\beta)
-\lp\alpha\lp\sqrt{\pi}y_ie^{y_i^2}\erfc(-y_i)+1\rp-\beta\rp\erfc(y_i)\rp.
\end{multline}
Similarly to (\ref{eq:hdg1zf}), (\ref{eq:boxhdg1zf}) indeed holds and one then has that (\ref{eq:boxhdg1ze}) holds as well. Through (\ref{eq:boxhdg1zd}) we have then that the right side of (\ref{eq:boxhdg1zc}) is indeed equal to $f^{(box)}_1(y_i;\alpha,\beta,\mu)$. One additional combination of (\ref{eq:boxhdg1z}) and (\ref{eq:boxhdg1zc}) brings us finally to
\begin{eqnarray}\label{eq:boxhdg1zg}
& & e^{y_e^2} y_e\erfc(-y_e)
=f^{(box)}_1(y_i;\alpha,\beta,\mu)\nonumber \\
& \Longleftrightarrow &  2\erfinv\lp
\frac{f^{(box)}_1(y_i;\alpha,\beta,\mu)+f^{(box)}_1(-y_i;\alpha,\beta,1-\mu)}{f^{(box)}_1(y_i;\alpha,\beta,\mu)-f^{(box)}_1(-y_i;\alpha,\beta,1-\mu)}\rp
\frac{e^{\erfinv\lp
\frac{f^{(box)}_1(y_i;\alpha,\beta,\mu)+f^{(box)}_1(-y_i;\alpha,\beta,1-\mu)}{f^{(box)}_1(y_i;\alpha,\beta,\mu)-f^{(box)}_1(-y_i;\alpha,\beta,1-\mu)}\rp
^2}}{f^{(box)}_1(y_i;\alpha,\beta,\mu)-f^{(box)}_1(-y_i;\alpha,\beta,1-\mu)}
=1.\nonumber \\
\end{eqnarray}
(\ref{eq:boxhdg1zg}) is sufficient to compute $y_i$. One can then utilize such $y_i$ and from (\ref{eq:boxhdg1z}) obtain $y_e$ and from (\ref{eq:boxhdg1p}) or (\ref{eq:boxhdg1s}) $\gamma_g$ (of course, a quick comparison of (\ref{eq:boxhdg1zg}) and (\ref{eq:boxhdg1z}) on the one side and (\ref{eq:boxdetanalIeer11l}) and (\ref{eq:boxdetanalIeer11k}) on the other side gives $y_i=y_2$ and $y_e=y_1$). After these values for $y_i$, $y_e$, and $\gamma_g$ are determined one can then use them to determine the optimal value of $\zeta^{(box)}_{\alpha,\beta}(\gamma_g,y_e,y_i)$ in (\ref{eq:boxhdg1a}) through (\ref{eq:boxhdg1b}). That eventually gives $I^{(box)}_{err}(\alpha,\beta)$ in (\ref{eq:boxhdg1}). Following Section \ref{sec:binl1} though, we will below try to provide a bit more explicit connection between the optimal $\zeta^{(box)}_{\alpha,\beta}(\gamma_g,y_e,y_i)$ and what we presented in Section \ref{sec:ldp}.

\subsubsection{Computing $I^{(box)}_{err}(\alpha,\beta)$}
\label{sec:boxhdgcompI}

As in Section \ref{sec:boxhdgcompI} when we discussed $\zeta^{(box)}_{\alpha,\beta}(\gamma_g,y_e,y_i)$, we here recognize that instead of working directly with $\zeta^{(box)}_{\alpha,\beta}(\gamma_g,y_e,y_i)$ it will be a bit easier to first deal separately with each of $\psicom^{(box)}$, $\psiint^{(box)}$, and $\psiext^{(box)}$. From this point on we assume that $y_i$, $y_e$, and $\gamma_g$ take the values determined through the procedure explained above. We start with $\psicom^{(box)}$ and write
\begin{eqnarray}\label{eq:boxhdg1zh}
  \psicom^{(box)} &=& -\mu(1-\beta)H\lp \frac{1-\alpha-\gamma_g}{\mu(1-\beta)} \rp
-(1-\mu)(1-\beta)H\lp \frac{(1-\mu)(1-\beta)-\gamma_g}{(1-\mu)(1-\beta)}\rp \nonumber \\
&=& -(1-\alpha-\gamma_g)\log\lp \frac{1-\alpha-\gamma_g}{\mu(1-\beta)}\rp
-(\mu(1-\beta)-(1-\alpha-\gamma_g))\log\lp\frac{(\mu(1-\beta)-(1-\alpha-\gamma_g))}{\mu(1-\beta)}\rp\nonumber \\
&& -((1-\mu)(1-\beta)-\gamma_g) \log\lp \frac{(1-\mu)(1-\beta)-\gamma_g}{(1-\mu)(1-\beta)} \rp-\gamma_g \log\lp \frac{\gamma_g}{(1-\mu)(1-\beta)} \rp \nonumber \\
&=& -\frac{1}{2}\lp 2\mu(1-\beta)-\lp\alpha\lp\sqrt{\pi} e^{y_i^2} y_i\erfc(-y_i)+1\rp-\beta\rp\erfc(y_i)\rp\nonumber \\
& &  \times \log\lp \frac{\lp2\mu(1-\beta)-\lp\alpha\lp\sqrt{\pi} e^{y_i^2} y_i\erfc(-y_i)+1\rp-\beta\rp\erfc(y_i)\rp}{2\mu(1-\beta)}\rp \nonumber \\
&& -\frac{1}{2}\lp\lp\alpha\lp\sqrt{\pi} e^{y_i^2} y_i\erfc(-y_i)+1\rp-\beta\rp\erfc(y_i)\rp \nonumber \\
& & \times  \log\lp\frac{\lp\alpha\lp\sqrt{\pi} e^{y_i^2} y_i\erfc(-y_i)+1\rp-\beta\rp\erfc(y_i)}{2\mu(1-\beta)}\rp\nonumber \\
&& -\frac{1}{2}\lp\lp\alpha\lp -\sqrt{\pi} e^{y_i^2} y_i\erfc(y_i)+1\rp-\beta\rp\erfc(-y_i)\rp \nonumber \\
& & \times  \log\lp\frac{\lp\alpha\lp -\sqrt{\pi} e^{y_i^2} y_i\erfc(y_i)+1\rp-\beta\rp\erfc(-y_i)}{2(1-\mu)(1-\beta)}\rp\nonumber \\
&& -\frac{1}{2}\lp 2(1-\mu)(1-\beta)-\lp\alpha\lp -\sqrt{\pi} e^{y_i^2} y_i\erfc(y_i)+1\rp-\beta\rp\erfc(-y_i)\rp\nonumber \\
& &  \times \log\lp \frac{\lp 2(1-\mu)(1-\beta)-\lp\alpha\lp -\sqrt{\pi} e^{y_i^2} y_i\erfc(y_i)+1\rp-\beta\rp\erfc(-y_i)\rp}{2(1-\mu)(1-\beta)}\rp. \nonumber \\
\end{eqnarray}
For $\psiint^{(box)}$ we in a similar fashion have
\begin{eqnarray}\label{eq:boxhdg1zi}
  \psiint^{(box)} &=& \alpha y_i^2 +((1-\beta)\mu-(1-\alpha-\gamma_g))\log(\erfc(y_i))+((1-\beta)(1-\mu)-\gamma_g)\log(\erfc(-y_i)))\nonumber \\
& & - (\alpha-\beta) \log(2)\nonumber\\
&=& \alpha y_i^2 +\frac{1}{2}\lp\lp\alpha\lp\sqrt{\pi} e^{y_i^2} y_i\erfc(-y_i)+1\rp-\beta\rp\erfc(y_i)\rp\log(\erfc(y_i))\nonumber \\
& & +\frac{1}{2}\lp\lp\alpha\lp -\sqrt{\pi} e^{y_i^2} y_i\erfc(y_i)+1\rp-\beta\rp\erfc(-y_i)\rp\log(\erfc(-y_i))- (\alpha-\beta) \log(2). \nonumber \\
\end{eqnarray}
Finally for $\psiext^{(box)}$ we obtain
\begin{eqnarray}\label{eq:boxhdg1zj}
  \psiext^{(box)} &=& -\alpha y_e^2 +(1-\alpha-\gamma_g)\log(\erfc(-y_e))+\gamma_g\log(\erfc(y_e))-(1-\alpha)\log(2) \nonumber \\
&=& -\alpha y_e^2 +\frac{1}{2}\lp 2\mu(1-\beta)-\lp\alpha\lp\sqrt{\pi} e^{y_i^2} y_i\erfc(-y_i)+1\rp-\beta\rp\erfc(y_i)\rp
\log(\erfc(-y_e))\nonumber \\
& & +\frac{1}{2}\lp 2(1-\mu)(1-\beta)+\lp\alpha\lp -\sqrt{\pi} e^{y_i^2} y_i\erfc(y_i)+1\rp-\beta\rp\erfc(-y_i)\rp\log(\erfc(y_e))-(1-\alpha)\log(2). \nonumber \\
\end{eqnarray}
Combining (\ref{eq:boxhdg1zh}), (\ref{eq:boxhdg1zi}), and (\ref{eq:boxhdg1zj}) we also have
\begin{eqnarray}\label{eq:boxhdg1zk}
  \psinet^{(box)}
&=& -\frac{1}{2}\lp 2\mu(1-\beta)-\lp\alpha\lp\sqrt{\pi} e^{y_i^2} y_i\erfc(-y_i)+1\rp-\beta\rp\erfc(y_i)\rp\nonumber \\
& &  \times \log\lp \frac{\lp2\mu(1-\beta)-\lp\alpha\lp\sqrt{\pi} e^{y_i^2} y_i\erfc(-y_i)+1\rp-\beta\rp\erfc(y_i)\rp}{2\mu(1-\beta)\erfc(-y_e)}\rp \nonumber \\
&& -\frac{1}{2}\lp\lp\alpha\lp\sqrt{\pi} e^{y_i^2} y_i\erfc(-y_i)+1\rp-\beta\rp\erfc(y_i)\rp \nonumber \\
& &  \times
 \log\lp\frac{\lp\alpha\lp\sqrt{\pi} e^{y_i^2} y_i\erfc(-y_i)+1\rp-\beta\rp\erfc(y_i)}{2\mu(1-\beta)\erfc(y_i)}\rp\nonumber \\
&& -\frac{1}{2}\lp\lp\alpha\lp -\sqrt{\pi} e^{y_i^2} y_i\erfc(y_i)+1\rp-\beta\rp\erfc(-y_i)\rp \nonumber \\
& &  \times
 \log\lp\frac{\lp\alpha\lp -\sqrt{\pi} e^{y_i^2} y_i\erfc(y_i)+1\rp-\beta\rp\erfc(-y_i)}{2(1-\mu)(1-\beta)\erfc(-y_i)}\rp\nonumber \\
&& -\frac{1}{2}\lp 2(1-\mu)(1-\beta)-\lp\alpha\lp -\sqrt{\pi} e^{y_i^2} y_i\erfc(y_i)+1\rp-\beta\rp\erfc(-y_i)\rp\nonumber \\
& &  \times \log\lp \frac{\lp 2(1-\mu)(1-\beta)-\lp\alpha\lp -\sqrt{\pi} e^{y_i^2} y_i\erfc(y_i)+1\rp-\beta\rp\erfc(-y_i)\rp}{2(1-\mu)(1-\beta)\erfc(y_e)}\rp. \nonumber \\
&& +\alpha y_i^2-\alpha y_e^2-(1-\beta)\log(2).
\end{eqnarray}
Recalling once again (\ref{eq:boxdetanalIeer11h}) and (\ref{eq:boxdetanalIeer11i}), (\ref{eq:boxhdg1zk}) can be further transformed
\begin{eqnarray}\label{eq:boxhdg1zl}
  \psinet^{(box)}
&=& -\frac{1}{2}\lp 2\mu(1-\beta)-\lp\alpha\lp\sqrt{\pi} e^{y_i^2} y_i\erfc(-y_i)+1\rp-\beta\rp\erfc(y_i)\rp \nonumber \\
& & \times \log\lp \frac{ y_ee^{y_e^2}\erfc(-y_e) \lp \alpha\lp\sqrt{\pi}y_ie^{y_i^2}\erfc(-y_i)+1\rp-\beta\rp}{2\mu(1-\beta)\erfc(-y_e)y_ie^{y_i^2}}\rp \nonumber \\
&& -\frac{1}{2}\lp\lp\alpha\lp\sqrt{\pi} e^{y_i^2} y_i\erfc(-y_i)+1\rp-\beta\rp\rp
 \log\lp\frac{\lp\alpha\lp\sqrt{\pi} e^{y_i^2} y_i\erfc(-y_i)+1\rp-\beta\rp\erfc(y_i)}{2\mu(1-\beta)}\rp\nonumber \\
&& -\frac{1}{2}\lp\lp\alpha\lp -\sqrt{\pi} e^{y_i^2} y_i\erfc(y_i)+1\rp-\beta\rp\rp
 \log\lp\frac{\lp\alpha\lp -\sqrt{\pi} e^{y_i^2} y_i\erfc(y_i)+1\rp-\beta\rp\erfc(-y_i)}{2(1-\mu)(1-\beta)}\rp\nonumber \\
&& -\frac{1}{2}\lp 2(1-\mu)(1-\beta)-\lp\alpha\lp -\sqrt{\pi} e^{y_i^2} y_i\erfc(y_i)-1\rp-\beta\rp \erfc(-y_i)\rp \nonumber \\
& & \times \log\lp \frac{y_ee^{y_e^2}\erfc(y_e)\lp\alpha\lp -\sqrt{\pi}y_ie^{y_i^2}\erfc(y_i)+1\rp-\beta\rp}{2(1-\mu)(1-\beta)\erfc(y_e)y_ie^{y_i^2}} \rp \nonumber \\
&& +\alpha y_i^2-\alpha y_e^2-(1-\beta)\log(2).
\end{eqnarray}
Continuing further we also obtain
\begin{eqnarray}\label{eq:boxhdg1zm}
  \psinet^{(box)}
&=& -\frac{1}{2}\lp 2\mu(1-\beta)-\lp\alpha\lp\sqrt{\pi} e^{y_i^2} y_i\erfc(-y_i)+1\rp-\beta\rp\erfc(y_i)\rp\log\lp \frac{ y_ee^{y_e^2}}{y_ie^{y_i^2}}\rp \nonumber \\
&& -\frac{1}{2}\lp 2(1-\mu)(1-\beta)-\lp\alpha\lp -\sqrt{\pi} e^{y_i^2} y_i\erfc(y_i)+1\rp-\beta\rp\erfc(-y_i)\rp\log\lp \frac{y_ee^{y_e^2}}{y_ie^{y_i^2}} \rp \nonumber \\
&& -\frac{1}{2}\lp 2\mu(1-\beta)-\lp\alpha\lp\sqrt{\pi} e^{y_i^2} y_i\erfc(-y_i)+1\rp-\beta\rp\erfc(y_i)\rp \nonumber \\
& & \times
\log\lp \frac{ \alpha\lp\sqrt{\pi}y_ie^{y_i^2}\erfc(-y_i)+1\rp-\beta}{2\mu(1-\beta)}\rp \nonumber \\
&& -\frac{1}{2}\lp 2(1-\mu)(1-\beta)-\lp\alpha\lp -\sqrt{\pi} e^{y_i^2} y_i\erfc(y_i)+1\rp-\beta\rp\erfc(-y_i)\rp \nonumber \\
& & \times \log\lp \frac{\alpha\lp -\sqrt{\pi}y_ie^{y_i^2}\erfc(y_i)+1\rp-\beta}{2(1-\mu)(1-\beta)} \rp \nonumber \\
&& -\frac{1}{2}\lp\alpha\lp\sqrt{\pi} e^{y_i^2} y_i\erfc(-y_i)+1\rp-\beta\rp\erfc(y_i)\log\lp\frac{\lp\alpha\lp\sqrt{\pi} e^{y_i^2} y_i\erfc(-y_i)+1\rp-\beta\rp}{2\mu(1-\beta)}\rp\nonumber \\
&& -\frac{1}{2}\lp \alpha \lp -\sqrt{\pi} e^{y_i^2} y_i\erfc(y_i)+1\rp-\beta\rp\erfc(-y_i) \log\lp \frac{\alpha\lp-\sqrt{\pi} e^{y_i^2} y_i\erfc(y_i)+1\rp-\beta}{2(1-\mu)(1-\beta)} \rp\nonumber \\
&& +\alpha y_i^2-\alpha y_e^2-(1-\beta)\log(2),
\end{eqnarray}
and
\begin{eqnarray}\label{eq:boxhdg1zn}
  \psinet^{(box)}
&=& -(1-\alpha)\log\lp \frac{ y_ee^{y_e^2}}{y_ie^{y_i^2}}\rp +\mu(1-\beta)\log\lp \frac{2\mu(1-\beta)}{ \alpha\lp\sqrt{\pi}y_ie^{y_i^2}\erfc(-y_i)+1\rp-\beta}\rp \nonumber \\
&& +(1-\mu)(1-\beta)\log\lp \frac{2(1-\mu)(1-\beta)}{\alpha\lp -\sqrt{\pi}y_ie^{y_i^2}\erfc(y_i)+1\rp-\beta}\rp  +\alpha y_i^2-\alpha y_e^2-(1-\beta)\log(2).
\end{eqnarray}
Finally, a couple of simple algebraic transformations give
\begin{eqnarray}\label{eq:boxhdg1zo}
  \psinet^{(box)}=I^{(box)}_{err}(\alpha,\beta)=
&=& (\alpha-1)\log\lp \frac{ y_e}{y_i}\rp +\mu(1-\beta)\log\lp \frac{1-\beta}{ \alpha\lp\sqrt{\pi}y_ie^{y_i^2}\erfc(-y_i)+1\rp-\beta}\rp \nonumber \\
&& +(1-\mu)(1-\beta)\log\lp \frac{(1-\mu)(1-\beta)}{\alpha\lp -\sqrt{\pi}y_ie^{y_i^2}\erfc(y_i)+1\rp-\beta}\rp  + y_i^2-y_e^2=I^{(box)}_{ldp}(\alpha,\beta).\nonumber \\
\end{eqnarray}
It is not that had to see from (\ref{eq:boxdetanalIeer11mb}) and (\ref{eq:boxhdg1zo}) that $I^{(box)}_{err}=I^{(box)}_{ldp}=I^{(bin,ub)}_{err,u}$ (of course $y_e\leftrightarrow y_1$ and $y_i\leftrightarrow y_2$) which ensures that the choice for $\nu$, $A_0$, $c_3$, and $\gamma$ made in (\ref{eq:boxdetanalIeer11m}) is indeed optimal. Moreover, in the lower tail regime ($\alpha<\alpha_w$, considerations from \cite{Stojnicl1BnBxfinn} ensure that one also has
\begin{equation}\label{eq:boxhdg1aa}
  \Psi^{(box)}_{net}(\alpha,\beta)=I^{(box)}_{cor}(\alpha,\beta)\triangleq\lim_{n\rightarrow\infty}
  \frac{\log{P^{(box)}_{cor}}}{n}=\psicom^{(box)}+\psiint^{(box)}+\psiext^{(box)},
\end{equation}
where $\psicom^{(box)}$, $\psiint^{(box)}$, and $\psiext^{(box)}$ are as in (\ref{eq:boxhdg2}). As was the case for binary $\ell_1$ in Section \ref{sec:binl1}, what we presented above then automatically characterizes the box $\ell_1$'s LDP. The following theorem summarizes what we presented above.
\begin{theorem}[Box $\ell_1$'s LDP]
Assume the setup of Theorem \ref{thm:boxldp3} and assume that a pair $(\alpha,\beta)$ is given. Let $P^{(box)}_{err}$ be the probability that the solutions of (\ref{eq:l0}) and (\ref{eq:boxl1bin}) coincide and let $P_{cor}$ be the probability that the solutions of (\ref{eq:l0}) and (\ref{eq:boxl1bin}) do \emph{not} coincide. Set
\begin{equation}\label{eq:boxthmfinalldpl11}
 f^{(box)}_1(y_2;\alpha,\beta,\mu) \triangleq
\lp
\frac{2\mu(1-\beta)y_2e^{y_2^2}}{\alpha\lp\sqrt{\pi}y_2e^{y_2^2}\erfc(-y_2)+1\rp-\beta}\rp-y_2e^{y_2^2}\erfc(y_2).
\end{equation}
Also let $y_2$ and $y_1$ satisfy the following \textbf{fundamental characterizations of the box $\ell_1$'s LDP} and achieve the optimum in (\ref{eq:boxhdg1a}):

\begin{eqnarray}
2\erfinv\lp
\frac{f^{(box)}_1(y_2;\alpha,\beta,\mu)+f^{(box)}_1(-y_2;\alpha,\beta,1-\mu)}{f^{(box)}_1(y_2;\alpha,\beta,\mu)-f^{(box)}_1(-y_2;\alpha,\beta,1-\mu)}\rp
\frac{e^{\erfinv\lp
\frac{f^{(box)}_1(y_2;\alpha,\beta,\mu)+f^{(box)}_1(-y_2;\alpha,\beta,1-\mu)}{f^{(box)}_1(y_2;\alpha,\beta,\mu)-f^{(box)}_1(-y_2;\alpha,\beta,1-\mu)}\rp^2}}
{f^{(box)}_1(y_2;\alpha,\beta,\mu)-f^{(box)}_1(-y_2;\alpha,\beta,1-\mu)}
& = & 1.\nonumber \\
\label{eq:boxthmfinalldpl12a}
\end{eqnarray}
and
\begin{eqnarray}
y_1  =  \erfinv\lp
\frac{f^{(box)}_1(y_2;\alpha,\beta,\mu)+f^{(box)}_1(-y_2;\alpha,\beta,1-\mu)}{f^{(box)}_1(y_2;\alpha,\beta,\mu)-f^{(box)}_1(-y_2;\alpha,\beta,1-\mu)}\rp.\nonumber \\
\label{eq:boxthmfinalldpl12b}
\end{eqnarray}

\noindent Finally, let $I^{(box)}_{ldp}(\alpha,\beta)$ be defined through the following \textbf{box $\ell_1$'s fundamental LDP rate function} characterization
\begin{eqnarray}
I^{(box)}_{ldp}(\alpha,\beta)&\triangleq & (\alpha-1)\log\lp\frac{y_1}{y_2}\rp+\mu(1-\beta)\log \lp \frac{1-\beta}{\alpha\lp\sqrt{\pi}e^{y_2^2}y_2\erfc(-y_2)+1\rp-\beta}\rp\nonumber \\
& & +(1-\mu)(1-\beta)\log\lp \frac{(1-\mu)(1-\beta)}{\alpha\lp -\sqrt{\pi}e^{y_2^2}y_2\erfc(y_2)+1\rp-\beta}\rp + y_2^2-y_1^2.
\label{eq:boxthmfinalldpl13}
\end{eqnarray}
Then if $\alpha>\alpha_w$
\begin{equation}
I^{(box)}_{err}(\alpha,\beta)\triangleq\lim_{n\rightarrow\infty}\frac{\log{P^{(box)}_{err}}}{n}=I^{(box)}_{ldp}(\alpha,\beta).\label{eq:boxthmfinalldpl14}
\end{equation}
Moreover, if $\alpha<\alpha_w$
\begin{equation}
I^{(box)}_{cor}(\alpha,\beta)\triangleq\lim_{n\rightarrow\infty}\frac{\log{P^{(box)}_{cor}}}{n}=I^{(box)}_{ldp}(\alpha,\beta).\label{eq:boxthmfinalldpl15}
\end{equation}\label{thm:boxfinalldpl1}
\end{theorem}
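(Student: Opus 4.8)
The plan is to assemble the statement from the two independent derivations carried out above — the probabilistic one of Sections \ref{sec:boxuppertail}--\ref{sec:boxlowertail} and the integral-geometry one of Section \ref{sec:boxhdg} — and to observe that they produce the \emph{same} optimizing quantities $y_1,y_2$ and the \emph{same} numerical value, which is precisely what upgrades the one-sided bounds of Theorems \ref{thm:boxldp3}, \ref{thm:boxldp2lower}, \ref{thm:boxldp3lower} into the claimed equalities. First I would recall that Theorem \ref{thm:boxldp3} already furnishes, for $\alpha>\alpha_w$, the inequality $I^{(box)}_{err}(\alpha,\beta)\le I^{(box)}_{ldp}(\alpha,\beta)$ with $y_1,y_2$ the solutions of (\ref{eq:boxthmfinalldpl12a})--(\ref{eq:boxthmfinalldpl12b}), and that Theorems \ref{thm:boxldp2lower}--\ref{thm:boxldp3lower} give, for $\alpha<\alpha_w$, the matching bound $I^{(box)}_{cor}(\alpha,\beta)\le I^{(box)}_{ldp}(\alpha,\beta)$ after the substitution $c_3\to -c_3$. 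So what remains is the reverse inequalities, and for these I would invoke the exact identities (\ref{eq:boxhdg1}) and (\ref{eq:boxhdg1aa}) imported from \cite{Stojnicl1BnBxfinn}: $I^{(box)}_{err}(\alpha,\beta)=\Psi^{(box)}_{net}(\alpha,\beta)$ when $\alpha>\alpha_w$ and $I^{(box)}_{cor}(\alpha,\beta)=\Psi^{(box)}_{net}(\alpha,\beta)$ when $\alpha<\alpha_w$, where $\Psi^{(box)}_{net}(\alpha,\beta)$ is the value of the $\max$-$\min$ program (\ref{eq:boxhdg1a}).

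Next I would evaluate that program explicitly. The concavity/convexity facts of Section \ref{sec:boxhdgpropzeta} — $\zeta^{(box)}_{\alpha,\beta}$ concave in $\gamma_g$ by (\ref{eq:boxhdg1oa}), concave in $y_e$ by the argument reusing Section \ref{sec:hdgconcye}, and convex in $y_i$ by (\ref{eq:boxhdg1f}) together with the inequality established after (\ref{eq:hdg1f}) — guarantee that any interior solution of the stationarity system (\ref{eq:boxhdg1n}) is the unique saddle value of (\ref{eq:boxhdg1a}), once one has dispensed (as in the hard-regime convention of Section \ref{sec:analysisIerr}) with the trivial boundary subcase. Solving that system in Section \ref{sec:boxhdgdereqns} reduces it, through (\ref{eq:boxhdg1p})--(\ref{eq:boxhdg1zg}), to exactly equations (\ref{eq:boxthmfinalldpl12a})--(\ref{eq:boxthmfinalldpl12b}) with $y_i=y_2$ and $y_e=y_1$; the chain (\ref{eq:boxhdg1zh})--(\ref{eq:boxhdg1zo}) of Section \ref{sec:boxhdgcompI} then evaluates $\psicom^{(box)}+\psiint^{(box)}+\psiext^{(box)}$ at this point to the right-hand side of (\ref{eq:boxthmfinalldpl13}), i.e. $\Psi^{(box)}_{net}(\alpha,\beta)=I^{(box)}_{ldp}(\alpha,\beta)$. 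Combining this with (\ref{eq:boxhdg1}) yields (\ref{eq:boxthmfinalldpl14}), and combining it with (\ref{eq:boxhdg1aa}) yields (\ref{eq:boxthmfinalldpl15}); as a by-product, comparing (\ref{eq:boxdetanalIeer11mb}) with (\ref{eq:boxhdg1zo}) shows the probabilistic bound of Theorem \ref{thm:boxldp3} is tight, so $\nu,A_0,c_3,\gamma$ in (\ref{eq:boxdetanalIeer11m}) are genuinely optimal in (\ref{eq:boxdetanalIeer2}) and (\ref{eq:boxldpthm2Ierrub1}).

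The step I expect to be the main obstacle is the algebraic bookkeeping that makes $\Psi^{(box)}_{net}=I^{(box)}_{ldp}$ fall out of the stationarity equations: verifying, as in (\ref{eq:boxhdg1zf}), that the two candidate expressions for $\gamma_g$ — the one coming from the $y_i$-equation (\ref{eq:boxhdg1p}) and the one coming from the $y_e$-equation (\ref{eq:boxhdg1s}) — are consistent, so that $e^{y_e^2}y_e\erfc(-y_e)=f^{(box)}_1(y_i;\alpha,\beta,\mu)$ and the reduction to a single scalar equation in $y_i$ actually closes. A secondary point requiring care is that the $\mu$-dependent coefficients $1-\alpha-\gamma_g$, $\mu(1-\beta)-(1-\alpha-\gamma_g)$, $(1-\mu)(1-\beta)-\gamma_g$, $\gamma_g$ stay nonnegative over the admissible range $\gamma_g\in\bigl(0,\min(1-\alpha,(1-\mu)(1-\beta))\bigr)$, which is what keeps $\psicom^{(box)}$ and all the logarithms well defined; this is the only place where the hypotheses $\mu\in[\tfrac12,1]$ and $(\alpha,\beta)\in(0,1)\times(0,\alpha)$ are genuinely used.
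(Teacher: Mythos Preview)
Your proposal is correct and mirrors the paper's own argument: the paper's proof is literally ``Follows from the above discussion,'' where that discussion consists of exactly the two ingredients you assemble --- the probabilistic upper bounds of Theorems \ref{thm:boxldp3} and \ref{thm:boxldp2lower}--\ref{thm:boxldp3lower}, and the exact high-dimensional geometry identity (\ref{eq:boxhdg1})/(\ref{eq:boxhdg1aa}) from \cite{Stojnicl1BnBxfinn} evaluated at the stationary point via (\ref{eq:boxhdg1p})--(\ref{eq:boxhdg1zo}) to yield $\Psi^{(box)}_{net}=I^{(box)}_{ldp}$. Your identification of the consistency check (\ref{eq:boxhdg1zf}) as the delicate algebraic step and of the nonnegativity of the $\mu$-dependent coefficients as the well-definedness condition is accurate and matches what the paper relies on (the latter only implicitly).
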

\begin{proof} Follows from the above discussion.
\end{proof}

\subsubsection{Phase transitions}
\label{sec:boxrephasetrans}

In this section we will show how one can quickly determine the phase transitions for the box $\ell_1$ utilizing Theorem \ref{thm:boxfinalldpl1} and the above considerations leading up to Theorem \ref{thm:boxfinalldpl1}. We start by closely following what we presented in Section \ref{sec:rephasetrans}, and focus on those pairs $(\alpha,\beta)$  for which $y_1=y_2$ in Theorem \ref{thm:boxfinalldpl1} (as mentioned in Section \ref{sec:rephasetrans}, we may not know a priori if such pairs do exist; however, the derivation below will confirm that such an assumption is actually correct). From (\ref{eq:boxdetanalIeer11h}) and (\ref{eq:boxdetanalIeer11i}) we have
\begin{eqnarray}\label{eq:boxrephasetran1}
y_2e^{y_2^2}\erfc(-y_2) & = & y_1e^{y_1^2}\erfc(-y_1)=
\lp
\frac{2\mu(1-\beta)y_2e^{y_2^2}}{\alpha\lp\sqrt{\pi}y_2e^{y_2^2}\erfc(-y_2)+1\rp-\beta}\rp-y_2e^{y_2^2}\erfc(y_2)=f^{(box)}_1(y_2;\alpha,\beta,\mu)\nonumber \\
 y_2e^{y_2^2}\erfc(y_2) & = & y_1e^{y_1^2}\erfc(y_1) =
\lp
\frac{2(1-\mu)(1-\beta)y_2e^{y_2^2}}{\alpha\lp -\sqrt{\pi}y_2e^{y_2^2}\erfc(y_2)+1\rp-\beta}\rp-y_2e^{y_2^2}\erfc(-y_2)\nonumber \\
& = & -f^{(box)}_1(-y_2;\alpha,\beta,1-\mu).\nonumber \\
\end{eqnarray}
Transforming (\ref{eq:boxrephasetran1}) a bit further we arrive at the following analogue of (\ref{eq:rephasetran2})
\begin{eqnarray}\label{eq:boxrephasetran2}
1=
\lp
\frac{\mu(1-\beta)}{\alpha\lp\sqrt{\pi}y_2e^{y_2^2}\erfc(-y_2)+1\rp-\beta}\rp\nonumber \\
1 =
\lp
\frac{(1-\mu)(1-\beta)}{\alpha\lp -\sqrt{\pi}y_2e^{y_2^2}\erfc(y_2)+1\rp-\beta}\rp.
\end{eqnarray}
From (\ref{eq:boxrephasetran2}) we then easily have
\begin{eqnarray}\label{eq:boxrephasetran3}
\sqrt{\pi}y_2e^{y_2^2}\erfc(-y_2) & = & \frac{\mu(1-\beta)+\beta}{\alpha}-1
\nonumber \\
\sqrt{\pi} y_2e^{y_2^2}\erfc(y_2) & = & 1+\frac{-\beta-(1-\mu)(1-\beta)}{\alpha},
\end{eqnarray}
and
\begin{eqnarray}\label{eq:boxrephasetran4}
\frac{1+\erf(y_2)}{1-\erf(y_2)}=\frac{\erfc(-y_2)}{\erfc(y_2)}=\frac{\mu(1-\beta)+\beta-\alpha}{\alpha-\beta-(1-\mu)(1-\beta)}.
\end{eqnarray}
From (\ref{eq:boxrephasetran4}) we then determine $\erf(y_2)$ and $y_2$
\begin{eqnarray}\label{eq:boxrephasetran5}
& & \erf(y_2)=\frac{\frac{\mu(1-\beta)+\beta-\alpha}{\alpha-\beta-(1-\mu)(1-\beta)}-1}{\frac{\mu(1-\beta)+\beta-\alpha}{\alpha-\beta-(1-\mu)(1-\beta)}+1}
=\frac{1+\beta-2\alpha}{(2\mu-1)(1-\beta)}\nonumber \\
\Longleftrightarrow & & y_2=\erfinv\lp\frac{1+\beta-2\alpha}{(2\mu-1)(1-\beta)}\rp.
\end{eqnarray}
A combination of (\ref{eq:boxrephasetran3}) and (\ref{eq:boxrephasetran5}) then gives
\begin{eqnarray}\label{eq:boxrephasetran6}
\sqrt{\pi}y_2e^{y_2^2}\erfc(-y_2) & = & \sqrt{\pi}y_2e^{y_2^2}(1+\erf(y_2))=\sqrt{\pi}y_2e^{y_2^2}\frac{2\mu(1-\beta)+2\beta-2\alpha}{(2\mu-1)(1-\beta)}=
\frac{\mu(1-\beta)+\beta-\alpha}{\alpha}.\nonumber \\
\end{eqnarray}
Finally combining (\ref{eq:boxrephasetran5}) and (\ref{eq:boxrephasetran6}) we obtain
\begin{eqnarray}\label{eq:boxrephasetran7}
\frac{(2\mu-1)(1-\beta))e^{-y_2^2}}{2\sqrt{\pi}\alpha y_2}=\frac{(2\mu-1)(1-\beta)
e^{-\lp\erfinv\lp\frac{1+\beta-2\alpha}{(2\mu-1)(1-\beta)}\rp\rp^2}}{2\sqrt{\pi}\alpha \erfinv\lp\frac{1+\beta-2\alpha}{(2\mu-1)(1-\beta)}\rp}=1.
\end{eqnarray}
(\ref{eq:boxrephasetran7}) effectively establishes the phase transition curve. Also, from (\ref{eq:boxrephasetran1}) we have
\begin{eqnarray}\label{eq:boxrephasetran8}
f_1(y_2;\alpha,\beta)-f^{(box)}_1(-y_2;\alpha,\beta,1-\mu)=
y_2e^{y_2^2}\erfc(-y_2)+ y_2e^{y_2^2}\erfc(y_2)=2y_2e^{y_2^2},
\end{eqnarray}
and
\begin{eqnarray}\label{eq:boxrephasetran9}
f_1(y_2;\alpha,\beta)+f_1(-y_2;\alpha,1-\beta)=
y_2e^{y_2^2}\erfc(-y_2)- y_2e^{y_2^2}\erfc(y_2)=2y_2e^{y_2^2}\erf(y_2).
\end{eqnarray}
Assuming that the pair $(\alpha,\beta)$ is such that (\ref{eq:boxrephasetran7}) holds one then also has $y_1=y_2$ and (\ref{eq:boxrephasetran8}) and (\ref{eq:boxrephasetran9}) ensure that
(\ref{eq:boxthmfinalldpl12a}) and (\ref{eq:boxthmfinalldpl12b}) hold and that
$I^{(box)}_{ldp}(\alpha,\beta)=0$ in (\ref{eq:boxthmfinalldpl13}) which is exactly the value that $I^{(box)}_{ldp}(\alpha,\beta)$ takes at the phase transition. The following theorem summarizes the above discussion and the obtained PT characterization. It is essentially a box $\ell_1$ analogue to Theorem \ref{thm:thmweakthr} which characterizes the binary $\ell_1$'s PT.
\begin{theorem}(Exact box $\ell_1$'s weak threshold/PT)
Let $A$ be an $m\times n$ matrix in (\ref{eq:l0})
with i.i.d. standard normal components. Let $\mu$ be a real number such that $\mu\in [\frac{1}{2},1]$. Further, let
the unknown $\x$ in (\ref{eq:l0}) be box-constrained $k$-sparse and let the locations of the elements of $\x$ from $(0,1)$ be arbitrarily chosen but fixed. Let $k,m,n$ be large
and let $\alpha_w=\frac{m}{n}$ and $\beta_w=\frac{k}{n}$ be constants
independent of $m$ and $n$. Further, let $\alpha_w$ and $\beta_w$ satisfy the following \textbf{fundamental characterization of the \emph{box} $\ell_1$'s PT}

\begin{center}
\shadowbox{$
\xi^{(box)}_{\alpha_{w},\mu}(\beta_w)\triangleq\psi^{(box)}_{\beta_w,\mu}(\alpha_{w})\triangleq
\frac{(2\mu-1)(1-\beta_w)\sqrt{\frac{1}{2\pi}}e^{-\lp\erfinv\lp\frac{1+\beta_w-2\alpha_w}{(2\mu-1)(1-\beta_w)}\rp\rp^2}}{\alpha_w\sqrt{2}\erfinv \lp\frac{1+\beta_w-2\alpha_w}{(2\mu-1)(1-\beta_w)}\rp}=1.
$}
-\vspace{-.5in}\begin{equation}
\label{eq:boxthmweaktheta2}
\end{equation}
\end{center}

Then:
\begin{enumerate}
\item If $\alpha>\alpha_w$ then with overwhelming probability the solution of (\ref{eq:boxl1bin}) is the box-constrained $k$-sparse $\x$ that solves (\ref{eq:l0}).
\item If $\alpha<\alpha_w$ then with overwhelming probability the box-constrained $k$-sparse $\x$ with given fixed locations of nonzero components is the solution of (\ref{eq:l0}) and is \textbf{not} the solution of (\ref{eq:boxl1bin}).
    \end{enumerate}
\label{thm:boxthmweakthr}
\end{theorem}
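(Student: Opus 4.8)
The plan is to obtain the phase transition as the zero level set of the large-deviations rate function already computed, rather than by a fresh Gaussian-width argument. First I would invoke Theorem \ref{thm:boxthmknownsuppcond}, which reduces the event that (\ref{eq:boxl1bin}) recovers the planted box-constrained $k$-sparse $\x$ to the null-space/sign condition whose failure probability is exactly $P^{(box)}_{err}$, so that $P^{(box)}_{cor}=1-P^{(box)}_{err}$ is the success probability. Then, by Theorems \ref{thm:boxldp2}, \ref{thm:boxldp3} and \ref{thm:boxfinalldpl1} in the regime $\alpha>\alpha_w$ one has $\lim_{n\to\infty}\tfrac{1}{n}\log P^{(box)}_{err}=I^{(box)}_{ldp}(\alpha,\beta)$, while by Theorems \ref{thm:boxldp2lower}, \ref{thm:boxldp3lower} and \ref{thm:boxfinalldpl1} in the regime $\alpha<\alpha_w$ one has $\lim_{n\to\infty}\tfrac{1}{n}\log P^{(box)}_{cor}=I^{(box)}_{ldp}(\alpha,\beta)$.

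Next I would use the computation of Section \ref{sec:boxrephasetrans}: imposing $y_1=y_2$ in the fundamental LDP characterizations (\ref{eq:boxthmfinalldpl12a})--(\ref{eq:boxthmfinalldpl13}) forces, through the chain (\ref{eq:boxrephasetran1})--(\ref{eq:boxrephasetran7}), precisely the relation (\ref{eq:boxthmweaktheta2}) between $(\alpha,\beta)$ (and $\mu$) and makes $I^{(box)}_{ldp}(\alpha,\beta)=0$; conversely, if $(\alpha,\beta)$ satisfies (\ref{eq:boxthmweaktheta2}) then $y_1=y_2$ and $I^{(box)}_{ldp}(\alpha,\beta)=0$. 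It then remains to show that $I^{(box)}_{ldp}(\alpha,\beta)<0$ strictly whenever $(\alpha,\beta)$ is off the curve (\ref{eq:boxthmweaktheta2}). For this I would (i) check that (\ref{eq:boxthmweaktheta2}) defines $\alpha_w$ unambiguously as a function of $\beta$ and $\mu$ in the relevant range, by analyzing the monotonicity of $\xi^{(box)}_{\alpha,\mu}(\beta)$ exactly as was done for the binary $\ell_1$ in Section \ref{sec:propxi} and in \cite{Stojnicl1RegPosasymldp}; and (ii) show that, with $\beta$ and $\mu$ fixed, the optimal value $\zeta^{(box)}_{\alpha,\beta}(c_3,\nu,A_0)$, which equals $I^{(box)}_{ldp}(\alpha,\beta)$ by Theorem \ref{thm:boxldp3}, is strictly monotone on each side of $\alpha_w$. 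The latter I would get from the envelope theorem --- the inner optimizers satisfy the stationarity systems (\ref{eq:boxdetanalIeer3a}) / (\ref{eq:boxhdg1n}), so only the explicit $\alpha$-dependence of $\zeta^{(box)}_{\alpha,\beta}$ contributes to $\tfrac{d}{d\alpha}I^{(box)}_{ldp}$ --- combined with the concavity in $\gamma_g,y_e$ and convexity in $y_i$ established in Section \ref{sec:boxhdgpropzeta}, which pin the optimum and control the sign; together with the endpoint value $I^{(box)}_{ldp}=0$ at $\alpha=\alpha_w$ this yields $I^{(box)}_{ldp}(\alpha,\beta)<0$ for all $\alpha\neq\alpha_w$.

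Finally I would read off the dichotomy. If $\alpha>\alpha_w$ then $I^{(box)}_{ldp}(\alpha,\beta)<0$, so $P^{(box)}_{err}$ decays exponentially in $n$ and hence (\ref{eq:boxl1bin}) recovers the box-constrained $k$-sparse $\x$ that solves (\ref{eq:l0}) with overwhelming probability. If $\alpha<\alpha_w$ then again $I^{(box)}_{ldp}(\alpha,\beta)<0$, so $P^{(box)}_{cor}$ decays exponentially, hence $P^{(box)}_{err}=1-P^{(box)}_{cor}\to 1$, and (\ref{eq:boxl1bin}) fails with overwhelming probability while the planted $\x$ remains the designated box-constrained $k$-sparse solution of (\ref{eq:l0}), exactly as in the binary Theorem \ref{thm:thmweakthr}.

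The main obstacle is step (ii): proving strict negativity of the optimal $\zeta^{(box)}_{\alpha,\beta}$ off the critical curve. The explicit solution of Section \ref{sec:boxanalysisIerr} supplies the value of $I^{(box)}_{ldp}$ but not its sign for free; one must differentiate the optimal value in $\alpha$, use that the inner optimizers are stationary so that only the explicit $\alpha$-dependence survives, and then sign that derivative using the concavity/convexity from Section \ref{sec:boxhdgpropzeta} and the boundary value $I^{(box)}_{ldp}=0$ at $\alpha=\alpha_w$. A secondary but necessary technical point is establishing well-definedness of the curve (\ref{eq:boxthmweaktheta2}) itself, i.e. the range of $(\beta,\mu)$ for which the $\erfinv$ argument stays in $(-1,1)$ and a unique $\alpha_w$ exists (note in particular the degeneration as $\mu\to\tfrac12$).
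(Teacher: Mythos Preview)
Your proposal is correct and follows essentially the same route as the paper: derive the phase transition as the locus where $y_1=y_2$ in the LDP characterization of Theorem~\ref{thm:boxfinalldpl1}, which via the algebra of (\ref{eq:boxrephasetran1})--(\ref{eq:boxrephasetran7}) yields exactly (\ref{eq:boxthmweaktheta2}) and forces $I^{(box)}_{ldp}=0$, then read off the two cases from the sign of the rate function. The paper's proof is simply ``Follows from the above discussion,'' where the discussion is Section~\ref{sec:boxrephasetrans}, so your plan matches it. One minor difference worth noting: the paper does not carry out your step~(ii) explicitly (strict negativity of $I^{(box)}_{ldp}$ off the curve via an envelope argument), but instead remarks right after the theorem that the PT curve can alternatively be obtained directly from the Gaussian-width condition (\ref{eq:rephaseboxldpwhSw}) using the methodology of \cite{StojnicCSetam09,StojnicCSetamBlock09,StojnicISIT2010binary,StojnicICASSP10knownsupp}; that route establishes the threshold without needing to analyze the sign of the LDP rate function away from the curve.
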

\begin{proof}
Follows from the above discussion.
\end{proof}
We do mention that the above way of deriving the PT curve is presented as a box $\ell_1$ analogue to what was done in Section \ref{sec:rephasetrans}. If one is interested solely in the phase transition and not necessarily in the LDPs the box $\ell_1$ PT curve can be quickly derived using the methodology that we introduced in \cite{StojnicCSetam09,StojnicCSetamBlock09,StojnicISIT2010binary,StojnicICASSP10knownsupp}. Namely, looking at (\ref{eq:boxldpwhSw}) one would simply need to determine for any $\alpha$ a $\beta\in(0,\alpha)$ such that
\begin{equation}
\lim_{n\rightarrow\infty}\frac{E w(\h,\Sw^{(box)})}{\sqrt{n}}
 =  \min_{\nu\geq0}\sqrt{E\lp\mu(1-\beta)\max(\h_i-\nu,0)^2+(1-\mu)(1-\beta)\max(\h_i+\nu,0)^2+\beta(\h_i+\nu)^2\rp}
 =\sqrt{\alpha}.\\
\label{eq:rephaseboxldpwhSw}
\end{equation}
Following the standards that we set in \cite{StojnicCSetam09,StojnicCSetamBlock09,StojnicISIT2010binary,StojnicICASSP10knownsupp} this is now a fairly routine task and we leave it as an exercise to confirm that one indeed obtains that $\beta$ satisfies the fundamental box $\ell_1$ PT from (\ref{eq:boxthmweaktheta2}).

The fundamental PT characterizations given in the above theorem are indeed well defined. Namely, for any given fixed $\alpha\in (0,1)$ there will be a unique $\beta\in(0,\alpha)$ such that $\xi^{(box)}_{\alpha,\mu}(\beta)=1$ and for any given fixed $\beta\in (0,1)$ there will be a unique $\alpha\in(\beta,1)$ such that $\psi^{(box)}_{\beta,\mu}(\alpha)=1$. The arguments are similar to the ones that we presented in \cite{Stojnicl1RegPosasymldp}. We leave their adaptation to the box $\ell_1$ PTs given in the above theorem as an easy exercise as well.
Finally, in Figure \ref{fig:weakl1PTbox} we show the theoretical PT curves for the box $\ell_1$ that one can obtain based on (\ref{eq:boxthmweaktheta2}).
\begin{figure}[htb]
\centering
\centerline{\epsfig{figure=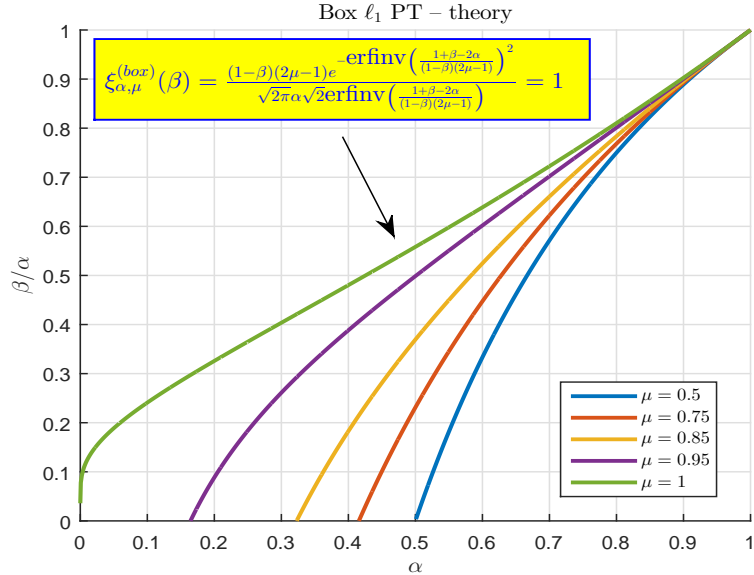,width=11.5cm,height=8cm}}
\caption{Box $\ell_1$'s weak PT; $\{(\alpha,\beta)|\xi^{(box)}_{\alpha,\mu}(\beta)=1\}$}
\label{fig:weakl1PTbox}
\end{figure}


\subsection{Theoretical and numerical LDP results}
\label{sec:boxthnumresutsnonn}

In this section we finally give a little bit of a flavor as to what is actually proven in Theorem \ref{thm:boxfinalldpl1}. These results are essentially box $\ell_1$ analogues to the results presented in Section \ref{sec:thnumresuts}. Consequently, in presentation of the results, we try to maintain as much of a parallelism with Section \ref{sec:thnumresuts} as possible. In Figure \ref{fig:l1regldpIerrubnonn} we show the theoretical LDP rate function curve that one can obtain based on Theorem \ref{thm:boxfinalldpl1}. This figure is complemented by Table \ref{tab:Ildptab1nonn} where we show the numerical values for all quantities of interest in Theorems \ref{thm:boxldp3} and \ref{thm:boxfinalldpl1} for several $\alpha$'s from the transition zone (i.e. for several $\alpha$'s around the breaking point; here $\beta=0.18469$ is chosen such that the breaking point/threshold for $\alpha=0.5$ and $\mu=0.85$). Finally, in Figure \ref{fig:weakl1LDPthrsimnonn} and Table \ref{tab:Ildptab2nonn} we show the comparison between the simulated values and the theoretical ones. As was the case for the binary $\ell_1$ in Section \ref{sec:thnumresuts}, here we again observe that even for fairly small dimensions (of order $100$) one already approaches the theoretical curves derived assuming an infinite dimensional asymptotic regime.


\begin{figure}[htb]
\begin{minipage}[b]{.5\linewidth}
\centering
\centerline{\epsfig{figure=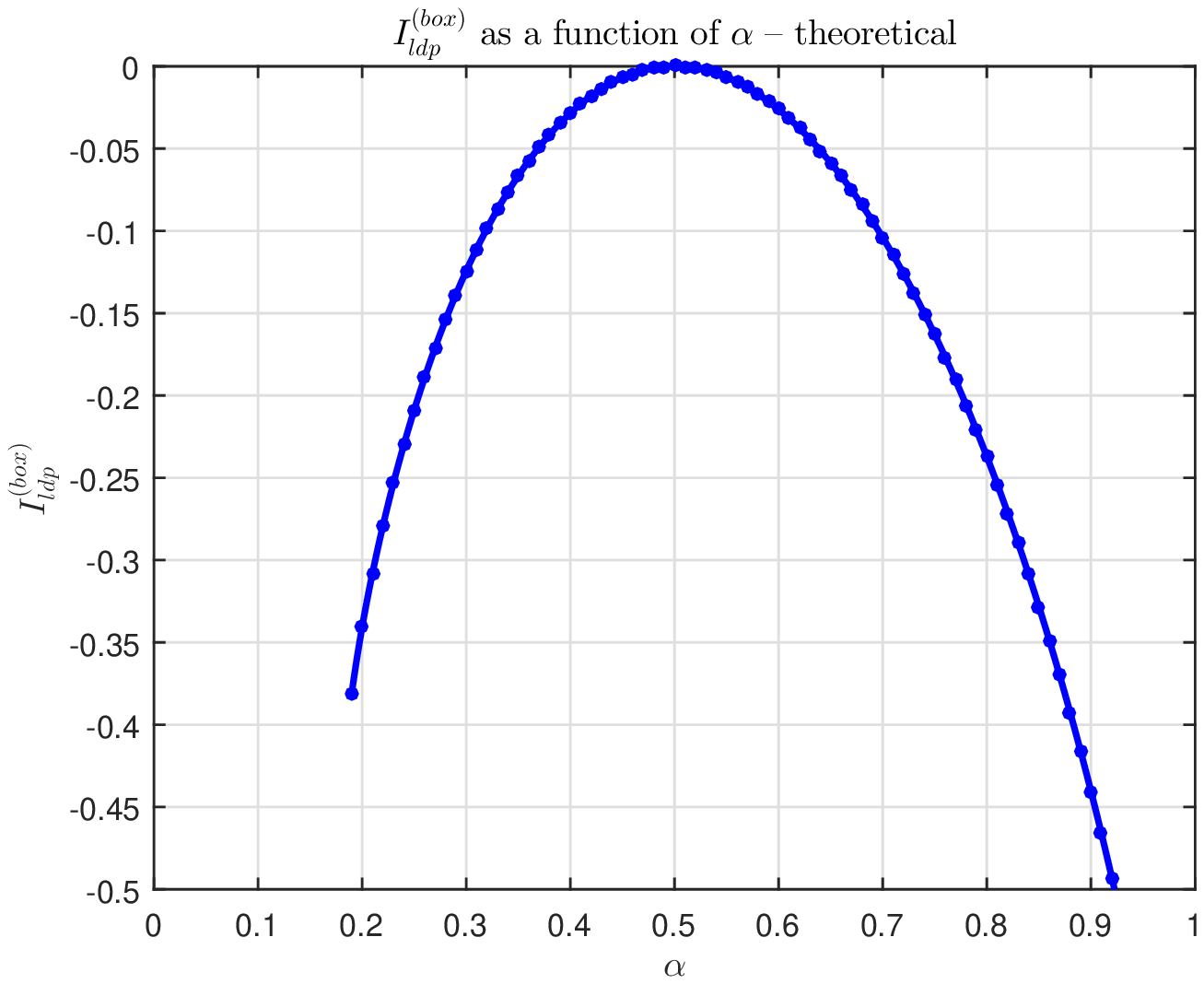,width=9cm,height=7cm}}
\end{minipage}
\begin{minipage}[b]{.5\linewidth}
\centering
\centerline{\epsfig{figure=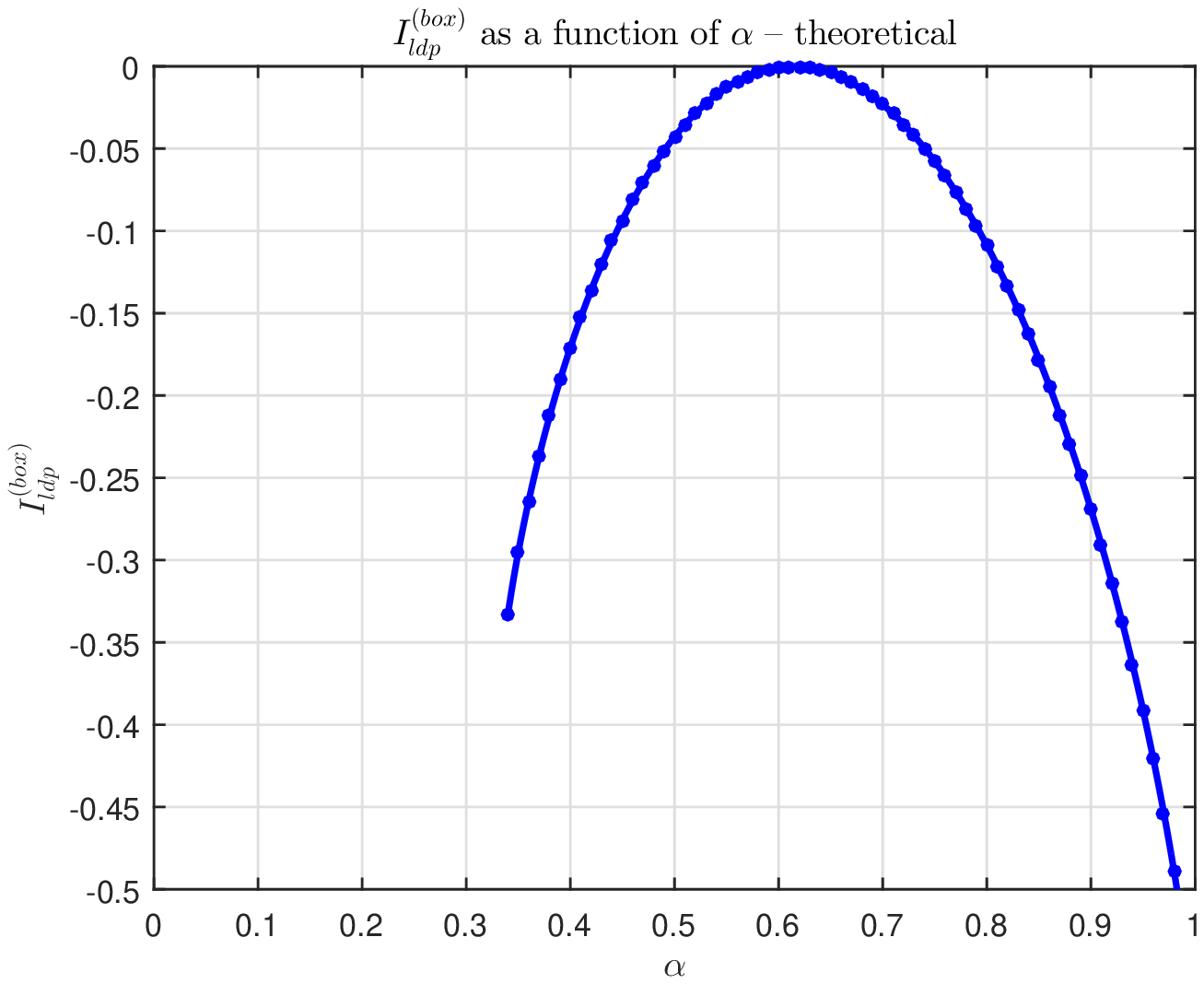,width=9cm,height=7cm}}
\end{minipage}
\caption{$I^{(box)}_{ldp}$ as a function of $\alpha$ for $\mu=0.85$; left -- $\beta=0.18469$; right -- $\beta=\frac{1}{3}$}
\label{fig:l1regldpIerrubnonn}
\end{figure}

\begin{table}[h]
\caption{A collection of values for $y_1$, $\gamma_{box}$, $A_{box}$, $y_2$, $\nu$, $A_0$, $c_3$, $\gamma$, and $I^{(box)}_{ldp}$ in Theorem \ref{thm:boxldp3}; $\beta=0.18469$, $\mu=0.85$}\vspace{.1in}
\hspace{-0in}\centering
\begin{tabular}{||c||c|c|c|c|c||}\hline\hline
$\alpha$ & $ 0.40 $ & $ 0.45 $ & $ 0.50 $ & $ 0.55 $ & $ 0.60 $ \\ \hline\hline
$y_1$        & $ 0.3588 $ & $ 0.3270 $ & $ 0.2951 $ & $ 0.2635 $ & $ 0.2321 $ \\ \hline
$\gamma_{box}$& $ 0.0607 $ & $ 0.0504 $ & $ 0.0414 $ & $ 0.0336 $ & $ 0.0269 $ \\ \hline
$A_{box}$    & $ 0.2806 $ & $ 0.2708 $ & $ 0.2612 $ & $ 0.2515 $ & $ 0.2418 $ \\ \hline
$y_2$        & $ 0.1994 $ & $ 0.2473 $ & $ 0.2951 $ & $ 0.3429 $ & $ 0.3904 $ \\ \hline\hline
$\nu$        & $ 0.5074 $ & $ 0.4624 $ & $ 0.4173 $ & $ 0.3726 $ & $ 0.3282 $ \\ \hline
$A_0$        & $ 1.7994 $ & $ 1.3223 $ & $ 1.0000 $ & $ 0.7684 $ & $ 0.5945 $ \\ \hline
$c_3$        & $ -0.7866 $ & $ -0.3797 $ & $ 0.0000 $ & $ 0.3952 $ & $ 0.8424 $ \\ \hline
$\gamma$     & $ 0.1757 $ & $ 0.2537 $ & $ 0.3536 $ & $ 0.4825 $ & $ 0.6514 $ \\ \hline\hline
$I^{(box)}_{ldp}$    & $ \mathbf{-0.0284} $ & $ \mathbf{-0.0069} $ & $ \mathbf{0.0000} $ & $ \mathbf{-0.0066} $ & $ \mathbf{-0.0262} $ \\ \hline\hline
\end{tabular}
\label{tab:Ildptab1nonn}
\end{table}

\begin{figure}[htb]
\centering
\centerline{\epsfig{figure=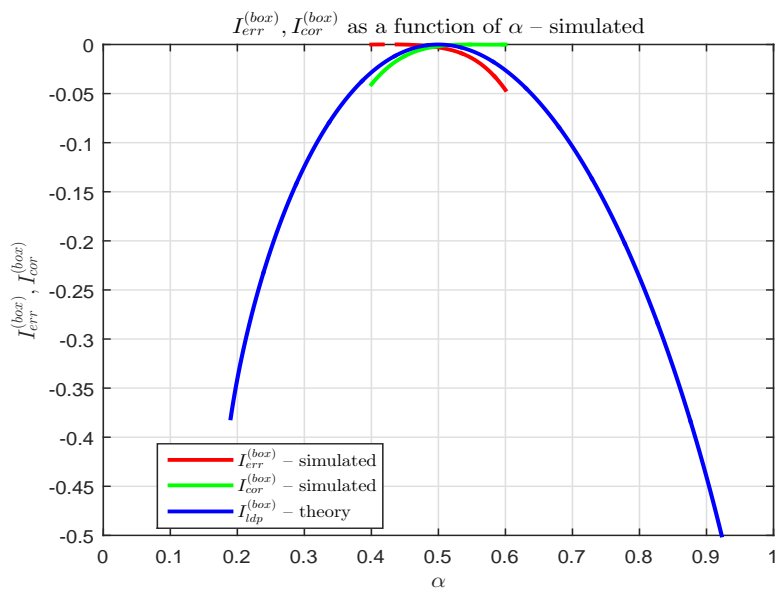,width=11.5cm,height=8cm}}
\caption{Box $\ell_1$'s weak LDP rate function -- theory and simulation; $\beta=0.18469$, $\mu=0.85$}
\label{fig:weakl1LDPthrsimnonn}
\end{figure}

\begin{table}[h]
\caption{$I^{(box)}_{err}$, $I^{(box)}_{cor}$ -- simulated; $I^{(box)}_{ldp}$ calculated for $\beta=0.18469$ and $\mu=0.85$}\vspace{.1in}
\hspace{-0in}\centering
\begin{tabular}{||c||c|c|c|c|c||}\hline\hline
$\alpha$ & $ 0.40 $ & $ 0.45 $ & $ 0.50 $ & $ 0.55 $ & $ 0.60 $ \\ \hline\hline
$\mu(n-k)$& $ 87 $ & $ 208 $ & $ 208 $ & $ 208 $ & $ 87 $ \\ \hline
$k$       & $ 23 $ & $ 55 $ & $ 55 $ & $ 55 $ & $ 23 $ \\ \hline
$m$       & $ 50 $ & $ 135 $ & $ 150 $ & $ 165 $ & $ 75 $ \\ \hline
$n$       & $ 125 $ & $ 300 $ & $ 300 $ & $ 300 $ & $ 125 $ \\ \hline\hline
$I^{(box)}_{err}$ -- simulated & $ -0.0000 $ & $ -0.0001 $ & \red{$ \mathbf{ -0.0030} $} & \red{$ \mathbf{ -0.0140} $} & \red{$ \mathbf{ -0.0463} $} \\ \hline
$I^{(box)}_{cor}$ -- simulated & \gr{$ \mathbf{ -0.0407} $} & \gr{$ \mathbf{ -0.0118} $} & \gr{$ \mathbf{ -0.0017} $} & $ -0.0001 $ & $ -0.0000 $ \\ \hline\hline
$I^{(box)}_{ldp}$ -- theory   & \bl{$ \mathbf{-0.0284} $} & \bl{$ \mathbf{-0.0069} $} & \bl{$ \mathbf{0.0000} $} & \bl{$ \mathbf{-0.0066} $} & \bl{$ \mathbf{-0.0262} $} \\ \hline\hline
\end{tabular}
\label{tab:Ildptab2nonn}
\end{table}

\section{Conclusion}
\label{sec:boxconc}

This paper revisits the standard $\ell_1$ heuristic and its a modification when used for solving random linear systems with structured sparse solutions. Two types of structuring on top of the standard sparsity are considered here: 1) the solutions are binary, i.e. each component of the solution vector is from a set of only two values (these values are assumed to be a priori known); 2) the solutions are box-constrained, i.e. each component of the solution vector is either at one of the edges of the given interval or inside the interval. We looked at a relaxed modification of the standard $\ell_1$ that we referred to as the binary or box $\ell_1$ and how it fares when used for solving systems known to have solutions structured as above.

For both of these problems we presented the standard phase transition characterizations as well as a much deeper understanding of these phenomena by connecting them to the large deviations principles from the classical probability theory. A collection of very powerful probabilistic results that we obtained recently was often utilized. They turned out to be quite powerful even in the contexts of interest here and enabled us to explicitly characterize the large deviations in a manner similar to the one we showcased earlier when characterizing the phase transitions. Of particular importance in our view is that we were able to parallel the elegance we achieved earlier in the phase transitions characterizations in various other scenarios.

We also presented an alternative, high-dimensional geometry type of, view of the binary/box $\ell_1$. Through such an analysis we were again able to fully characterize the performance behavior of the modified $\ell_1$s. Consequently, we were able to show that the two substantially different analysis paths produce the same results (a conclusion certainly expected if the axioms of mathematics are properly set). To give a bit of a flavor as to what we actually proved in the paper, we also presented quite a few numerical results. They are in a very good agreement with all of our theoretical predictions (in fact, the simulated results indicate that this theoretical/numerical agreement already happens for systems of rather small dimensions of order of few hundreds which is perhaps somewhat surprising given that the theoretical results, by the definitions of the LDPs, assume systems of very large, basically infinite, dimensions). Clearly, there are many ways that one can exploit to continue further and study various other aspects of the algorithms/problems at hand. A couple of cosmetic adjustments of the techniques introduced here and in a few of our earlier works are on occasion needed. However, the above mentioned conceptual elegance of the approaches that we presented ensures that these adjustments are now fairly routine tasks. Still, we will present some of them in several companion papers for a few related problems that we find interesting.

\begin{singlespace}
\bibliographystyle{plain}
\bibliography{l1bnbxldpasym1Refs}
\end{singlespace}

\end{document}